\documentclass{amsart}

\usepackage{amssymb}
\usepackage{amsmath}
\usepackage{graphicx}
\usepackage{epic,eepic} 

%
%

\newcommand{\erase}[1]{}

\theoremstyle{plain}
\newtheorem{theorem}{Theorem}[section]
\newtheorem{lemma}[theorem]{Lemma}
\newtheorem{proposition}[theorem]{Proposition}
\newtheorem{corollary}[theorem]{Corollary}

\theoremstyle{definition}
\newtheorem{definition}[theorem]{Definition}
\newtheorem{example}[theorem]{Example}
\newtheorem{algorithm}[theorem]{Algorithm}

\newtheorem{criterion}[theorem]{Criterion}
\newtheorem{claim}[theorem]{Claim}

\theoremstyle{remark}
\newtheorem{remark}[theorem]{Remark}

\numberwithin{equation}{section}
\numberwithin{table}{section}
\numberwithin{figure}{section}

\newcommand{\step}[1]{{\it Step #1}}

\newcommand{\C}{\mathord{\mathbb C}}
\newcommand{\D}{\mathord{\mathbb D}}
\newcommand{\F}{\mathord{\mathbb F}}
\renewcommand{\H}{\mathord{\mathbb H}}
\renewcommand{\P}{\mathord{\mathbb  P}}
\newcommand{\Q}{\mathord{\mathbb  Q}}
\newcommand{\R}{\mathord{\mathbb R}}
\newcommand{\Z}{\mathord{\mathbb Z}}

\newcommand{\BBB}{\mathord{\mathcal B}}

\newcommand{\DDD}{\mathord{\mathcal D}}
\newcommand{\FFF}{\mathord{\mathcal F}}
\newcommand{\HHH}{\mathord{\mathcal H}}

\newcommand{\NNN}{\mathord{\mathcal N}}
\newcommand{\PPP}{\mathord{\mathcal P}}
\newcommand{\RRR}{\mathord{\mathcal R}}
\newcommand{\VVV}{\mathord{\mathcal V}}

\newcommand{\inj}{\hookrightarrow}
\newcommand{\isom}{\mathbin{\,\raise -.6pt\rlap{$\to$}\raise 3.5pt \hbox{\hskip .3pt$\mathord{\sim}$}\;}}
\newcommand{\set}[2]{\{\; {#1} \; \mid \; {#2} \;  \}}
\newcommand{\shortset}[2]{\{ {#1} \,|\, {#2}   \}}
\newcommand{\gen}[1]{\langle {#1}  \rangle}

\newcommand{\tensor}{\otimes}

\newcommand{\inv}{\sp{-1}}

\newcommand{\sprime}{\sp\prime}

\newcommand{\spar}[1]{\sp{(#1)}}
\newcommand{\spprime}{\sp{\prime\prime}}
\newcommand{\spcirc}{\sp{\mathord{\circ}}}
\newcommand{\sptimes}{\sp{\times}}
\newcommand{\sperp}{\sp{\perp}}
\newcommand{\dual}{\sp{\vee}}

\newcommand{\semidirectproduct}{\rtimes}

\newcommand{\bdr}{\partial\,}

\newcommand{\Hom}{\mathord{\mathrm {Hom}}}
\newcommand{\GL}{\mathord{\mathrm {GL}}}
\newcommand{\PGL}{\mathord{\mathrm {PGL}}}
\newcommand{\OG}{\mathord{\mathrm {O}}}
\newcommand{\SL}{\mathord{\mathrm{SL}}}

\newcommand{\id}{\mathord{\mathrm {id}}}
\newcommand{\Ker}{\operatorname{\mathrm {Ker}}\nolimits}
\newcommand{\Image}{\operatorname{\mathrm {Im}}\nolimits}
\newcommand{\Aut}{\operatorname{\mathrm {Aut}}\nolimits}

\newcommand{\pr}{\mathord{\mathrm {pr}}}

\newcommand{\rank}{\operatorname{\mathrm {rank}}\nolimits}
\newcommand{\disc}{\operatorname{\mathrm {disc}}\nolimits}

\newcommand{\closure}[1]{\overline{#1}}

\newcommand{\rmand}{\textrm{and}}
\newcommand{\quand}{\quad\rmand\quad}

\renewcommand{\L}{\mathord{\mathbf{L}}}
\newcommand{\MW}{\mathord{\mathrm{MW}}}
\newcommand{\Nef}{\mathord{\mathrm{Nef}}}
\newcommand{\intM}[2]{\langle{#1}\rangle_{#2}}
\newcommand{\aut}{\operatorname{\mathit {Aut}}\nolimits} 
\newcommand{\autG}{\aut_G}

\newcommand{\Chams}{\D}
\newcommand{\Roots}{\RRR}
\newcommand{\sphyp}{\sp*}
\newcommand{\procadj}{\mathord{\mathtt{adj}}}
\newcommand{\discgr}[1]{A_{#1}}
\newcommand{\clPPP}{\overline{\PPP}}
\newcommand{\clPPPQ}{\overline{\PPP}^{\,\Q}}
\newcommand{\clH}{\overline{\H}}
\newcommand{\clHQ}{\overline{\H}^{\,\Q}}
\newcommand{\clD}{\overline{D}}
\newcommand{\clDDD}{\overline{\DDD}}
\newcommand{\bdrclH}{\bdr\clH}
\newcommand{\bdrclHQ}{\bdr\clH^{\,\Q}}
\newcommand{\lineseg}[1]{\overline{#1}}
\newcommand{\RtsLS}{\Roots_{\L|S}}
\newcommand{\opp}{\mathord{{\rm opp}}}
\newcommand{\HS}{\mathord{H\hskip -1pt S}}
\newcommand{\HB}{\mathord{H\hskip -1pt B}}
\newcommand{\bdrHB}{\mathord{\bdr \hskip -1.2pt H\hskip -1.2pt B}}

\newcommand{\DR}{\mathord{{\rm DR}}}

\newcommand{\Nc}{N}

\newcommand{\propVG}{\mathord{\rm [G]}}
\newcommand{\propVone}{\mathord{\rm [V1]}}
\newcommand{\propVtwo}{\mathord{\rm [V2]}}
\newcommand{\propVthree}{\mathord{\rm [V3]}}
\newcommand{\propVfour}{\mathord{\rm [V4]}}
\newcommand{\assumpone}{\mathord{\rm [SG1]}}
\newcommand{\assumptwo}{\mathord{\rm [SG2]}}
\newcommand{\assumpthree}{\mathord{\rm [SG3]}}

%
%

\begin{document}

\title[Automorphism groups of $K3$ surfaces]
{An algorithm to compute  automorphism groups of $K3$ surfaces
and an application to singular $K3$ surfaces}

\author{Ichiro Shimada}
\address{
Department of Mathematics, 
Graduate School of Science, 
Hiroshima University,
1-3-1 Kagamiyama, 
Higashi-Hiroshima, 
739-8526 JAPAN
}
\email{shimada@math.sci.hiroshima-u.ac.jp}

\thanks{Partially supported by
JSPS Grant-in-Aid for Challenging Exploratory Research No.23654012
and 
JSPS Grants-in-Aid for Scientific Research (C) No.25400042 
}

\begin{abstract}
Let $X$ be a  complex algebraic  $K3$ surface or a supersingular $K3$ surface in odd characteristic.
We present an algorithm by which,
under certain assumptions on $X$,
we can calculate a finite set of generators 
of the image of the natural homomorphism from the automorphism group of  $X$ to 
the orthogonal group of the  N\'eron-Severi lattice of $X$.
We then apply this algorithm to certain complex $K3$ surfaces,
among which are  singular $K3$ surfaces whose transcendental lattices are of small discriminants.
\end{abstract}


\maketitle


\section{Introduction}\label{sec:Intro}
The automorphism group $\Aut(X)$ of an algebraic  $K3$ surface $X$ is 
an important and interesting object.
Suppose that $X$ is defined over the complex number field $\C$,
or is supersingular in odd characteristic.
Then, thanks to  the Torelli-type theorem
due to  Piatetski-Shapiro and Shafarevich~\cite{MR0284440} 
and Ogus~\cite{MR563467},~\cite{MR717616},
we can study $\Aut(X)$ by the N\'eron-Severi lattice $S_X$ of $X$.
We denote by  $\OG(S_X)$ the orthogonal group of $S_X$.
Then we have a natural homomorphism
\begin{equation*}\label{eq:naturalhom}
\varphi_X\colon  \Aut(X) \to \OG(S_X).
\end{equation*}
It is known that this homomorphism has only  a finite kernel.
Using the reduction theory for arithmetic subgroups of $\OG(S_X)$, 
Sterk~\cite{MR786280} and Lieblich and Maulik~\cite{arXiv11023377} 
proved that $\Aut(X)$ is finitely generated.
The N\'eron-Severi lattices for which  $\Aut(X)$ are finite were classified by
Nikulin~\cite{MR633160},~\cite{MR1802343} and Vinberg~\cite{MR2429266}.
On the other hand,
when $\Aut(X)$ is infinite, 
it is in general a  difficult problem to give a set of generators.
\par
We also have the following related problem.
Let $\Nef(X)$ denote the nef cone of $X$;  that is, the cone of $S_X\tensor \R$ consisting of vectors
$x\in S_X\tensor \R$  such that $\intM{x, C}{}\ge 0$ holds for any curve $C$ on $X$,
where  $\intM{\phantom{a}, \phantom{a}}{}$ is the intersection form on $S_X$.
In order to classify various geometric objects on $X$
(for example, smooth rational curves, Jacobian fibrations, or polarizations of a fixed degree)
\emph{modulo} $\Aut(X)$, 
it is useful to describe explicitly a fundamental domain  
of the action of $\Aut(X)$ on $\Nef(X)$. 
\par
Several authors have studied these problems 
 by using the idea of Borcherds~\cite{MR913200},~\cite{MR1654763}
to embed $S_X$ into an even unimodular hyperbolic lattice of rank $26$.
In these works, however,
they required that $S_X$ should satisfy a certain strong condition
(see~Section~\ref{subsec:history} below for the details),
and hence the range of applications is limited.
\par
The purpose of this paper is to present an algorithm (Algorithm~\ref{algo:main})
that calculates, under   assumptions on $S_X$  milder than  the preceding works,
 a  finite set of generators of the image of $\varphi_X$
and a closed domain $F$ of  $\Nef(X)$ with the following properties:
\begin{itemize}
\item[(i)] For any $v\in \Nef(X)$, there exists an element $g\in \Aut(X)$ such that $v^g\in F$.
%
\item[(ii)] The domain $F$ is tiled by a finite number of convex cones, which we call chambers.
Each chamber is bounded by  a finite number of hyperplanes  and its stabilizer subgroup in $\Aut(X)$ is finite.
%
\end{itemize}
%
See Remark~\ref{rem:weakfund}
for the relation of $F$ with a fundamental domain of 
the action of $\Aut(X)$ on $\Nef(X)$.
The detailed description of the assumptions we impose on $S_X$ 
will be given  in Section~\ref{sec:geometricapplication}.
\par
The algorithm can be applied to a wide class of $K3$ surfaces.
We give two examples. 
%
%
%
%
\begin{example}\label{example:24}
As an example of a $K3$ surface with small Picard number and an infinite automorphism group,
we consider a complex $K3$ surface $X$  whose N\'eron-Severi lattice 
$S_X$ has
a Gram matrix 
$$
M:=\left[\begin{array}{ccc}
0 & 1 & 0 \\
1 & -2 & 0\\
0 & 0 & -24
\end{array}\right]
$$
with respect to a  certain basis $f_{\phi}, z_{\phi},  v$  such that 
$f_{\phi}$ and $z_{\phi}$ are the classes of a fiber and the zero section of a Jacobian fibration  $\phi: X\to \P^1$.
Then the  Mordell-Weil group $\MW_{\phi}$ of $\phi$ is isomorphic to $\Z$.
Hence $\Aut(X)$ contains an infinite subgroup $\MW_{\phi}\semidirectproduct \Z/2\Z$
(see~Section~\ref{sec:rho3}).
We assume  that the period $\omega_X$ of $X$ is generic in $T_X\tensor \C$,
where $T_X$ is the transcendental lattice of $X$.
%
We let $\OG(S_X)$ act on $S_X$ from the right so that
$$
\OG(S_X)=\set{g\in \GL_3(\Z)}{g \,M\,{}^t g=M}.
$$
Then 
$\varphi_X$
is injective, and its image is generated by the following  matrices:
$$
\left[
\begin{array}{ccc}
1 & 0 & 0 \\
0 & 1 & 0\\
0 & 0 & -1
\end{array}\right],
\quad
\left[
\begin{array}{ccc}
1 & 0 & 0 \\
12 & 1 & -1\\
24 & 0 & -1
\end{array}\right],
\quad
\left[\begin{array}{ccc}
37 & 12 & -5 \\
36 & 13 & -5\\
360 & 120 & -49
\end{array}\right],
\quad
\left[\begin{array}{ccc}
97 & 48 & -14 \\
0 & 1 & 0\\
672 & 336& -97
\end{array}\right].
$$
The first two elements of these four matrices are the images of  the involutions that generate
$\MW_{\phi}\semidirectproduct \Z/2\Z \cong \Z/2\Z* \Z/2\Z$.
We need two more  automorphisms to generate the full automorphism group $\Aut(X)$.
Moreover we can show that $\Aut(X)$ acts on the set of smooth rational curves on $X$ transitively.
In Figure~\ref{figure:Poincare},
we give a tessellation of $\Nef(X)$ by a fundamental domain of $\Aut(X)$.
See~Section~\ref{sec:rho3} or the author's web-page~\cite{compdataweb} for more examples of this type.
\end{example}
\begin{example}\label{example:singK3disc11}
A complex algebraic $K3$ surface  is said to be \emph{singular} if its Picard number 
attains the possible maximum  $20$.
By the result of Shioda and Inose~\cite{MR0441982},
the isomorphism class of 
a singular $K3$ surface $X$ is determined by its oriented transcendental lattice $T_X$,
and  $\Aut(X)$ is always infinite.
See~\cite{MR1820211} for the standard Gram matrices 
$$
\left[
\begin{array}{cc} 
a & b \\ b & c 
\end{array}
\right]
$$
of oriented transcendental lattices of singular $K3$ surfaces.
Let $\disc T_X$ denote the discriminant of $T_X$.
Since $T_X$ is an even lattice,
we see that  $\disc T_X=ac-b^2$ is congruent to $0\;\textrm{or}\; 3 \bmod 4$. 
We consider singular $K3$ surfaces $X$
with small  $\disc T_X$.
The automorphism groups of the singular $K3$ surfaces $X$
with $\disc T_X=3$ and $4$ 
were determined by Vinberg~\cite{MR719348},
and $\Aut(X)$
of the singular $K3$ surface $X$
with $\disc T_X=7$ was determined by Ujikawa~\cite{MR3113614}.
The case where $\disc T_X=8$ can be handled by  \emph{usual} Borcherds' method
(see  Section~\ref{sec:singK3}).
Therefore the next example to be considered is 
the singular  $K3$ surface $X$
whose transcendental lattice  is given by a Gram matrix
$$
\left[
\begin{array}{cc} 
2 & 1 \\ 1 & 6
\end{array}
\right].
$$
In this case, $\varphi_X$ is injective.
We find that 
$\Image \varphi_X$ is generated by $767$ elements of $\OG(S_X)$,
and,  as in Example~\ref{example:24},
we can present them  explicitly  as $20\times 20$ matrices 
with respect to a certain basis of $S_X$.
These matrices and related computational data are given in the author's web-page~\cite{compdataweb}. 
From these data, we see that
the number of smooth rational curves on $X$ is at most $347$ modulo $\Aut(X)$.
(Note that the number $767$ of generators of $\Aut(X)$ and the upper-bound 
$347$ of the number 
of  smooth rational curves modulo $\Aut(X)$ are not minimal.)
See~Section~\ref{sec:singK3}  for more examples of this type.
\end{example}
\par
Our main algorithm (Algorithm~\ref{algo:main}) 
can be applied to a wide class of $K3$ surfaces theoretically.
The most crucial assumption that $S_X$ be embedded 
primitively into an even unimodular hyperbolic lattice of rank $10$, $18$ or $26$ 
is always satisfied when $X$ is defined over $\C$
(see Proposition~\ref{prop:embeddable}).
However,
experiments show that the computational complexity
of Algorithm~\ref{algo:main}
grows rapidly as the rank and the discriminant of $S_X$ become large.
\par
To the best knowledge of the author,
the automorphism group of the complex Fermat quartic surface $X$ is still unknown.
This surface is a singular $K3$ surface with $\disc T_X=64$,
and we can apply our algorithm to this surface.
It turns out, however,  that the computation is very heavy,
and we could not finish the calculation
(see~Remark~\ref{rem:complexFQ}).
\par
Even when the computation is too heavy to be completed in a reasonable time,
our algorithm yields many interesting automorphisms 
and projective models of the given $K3$ surface on the way of computation.
We have applied this method to the supersingular $K3$ surface in characteristic $5$ with Artin invariant $1$
in~\cite{MR3286672}.
\par
It is a totally different problem to give geometrically  an automorphism $g$ of $X$
such that $\varphi_X(g)$ is equal to a given matrix in $\OG(S_X)$.
In~\cite{ShimadaAutSingK3}, we discuss an algorithmic approach to this problem,
and apply it to  certain singular $K3$ surfaces.
\subsection{The difference of our algorithm from the preceding works}\label{subsec:history}
We briefly review 
 Borcherds' method~\cite{MR913200},~\cite{MR1654763},
and its applications to $K3$ surfaces.
A lattice of rank $n>1$ is said to be \emph{hyperbolic}  if 
its signature  is $(1, n-1)$.
Let $S$ be an even hyperbolic lattice of rank $<26$.
A \emph{positive cone} $\PPP_S$ of $S$ is one of  the two connected components
of $\shortset{x\in S\tensor \R}{x^2>0}$.
We denote by $\OG^+(S)$ the stabilizer subgroup of $\PPP_S$ in $\OG(S)$.
Let $W(S)$ denote the subgroup of $\OG^+(S)$ 
generated by the reflections in the hyperplanes $(r)\sperp$
perpendicular to $r$,
where $r$ runs through the set  $\Roots_S:=\shortset{r\in S}{r^2=-2}$.
The mirrors $(r)\sperp$
decompose $\PPP_{S}$ into the union of 
standard fundamental domains %
of the action of $W(S)$ on $\PPP_S$.
We call each of these standard fundamental domains
an \emph{$\Roots_{S}\sphyp$-chamber}.
Let $G$ be a subgroup of $\OG^+(S)$ with finite index.
Let $\Nc$ be an $\Roots_{S}\sphyp$-chamber,
and let $\aut_{G}(\Nc)$ denote the stabilizer subgroup in $G$
of the $\Roots_{S}\sphyp$-chamber $\Nc$.
In the application to a complex $K3$ surface $X$,
$S$ is the N\'eron-Severi lattice $S_X$,
$\PPP_S$ is the connected component containing an ample class,
the group $G$ consists of elements $g\in \OG^+(S)$
 liftable to  isometries of $H^2(X, \Z)$ that preserve the period of $X$,
and $\Nc$ is the intersection of  $\Nef(X)$ with $\PPP_S$.
\par
%
Borcherds gave a method to calculate 
a finite set of generators of $\aut_{G}(\Nc)$.
Suppose  that  $S$  is primitively embedded 
into an even unimodular hyperbolic lattice $\L:=\textrm{II}_{1, 25}$ of rank $26$
in such a way that every $g\in G$ lifts to an isometry of $\L$.
We also assume that the orthogonal complement $R$ of $S$ in $\L$
cannot be embedded into the (negative-definite) Leech lattice.
Let $\PPP_{\L}\subset \L\tensor\R$
be the  positive cone of $\L$ containing $\PPP_S$.
The structure of $\Roots_{\L}\sphyp$-chambers is well-understood by Conway~\cite[Chapter 27]{MR1662447}.
Then the tessellation of $\PPP_{\L}$  by the $\Roots_{\L}\sphyp$-chambers
induces a tessellation of $\PPP_S$,
which is invariant under the action of $G$.
We call the tiles constituting this induced tessellation
 \emph{$\RtsLS\sphyp$-chambers}.
Note that the $\Roots_{S}\sphyp$-chamber $\Nc$ is a union of $\RtsLS\sphyp$-chambers.
Two $\RtsLS\sphyp$-chambers $D$ and $D\sprime$ are said to be \emph{$G$-congruent}
if there exists an element $g\in G$ that maps $D$ to $D\sprime$.
By  the  reduction theory for arithmetic subgroups of $\OG (S)$,
we see that
the number of $G$-congruence classes of $\RtsLS\sphyp$-chambers is finite.
Let 
$$
\D:=\{ D_0, \dots, D_{m-1}\}
$$
be a complete set of representatives of $G$-congruence classes 
of $\RtsLS\sphyp$-chambers such that each $D_i$ is contained in $\Nc$.
From the list $\D$, we can obtain a finite set of generators of $\aut_{G}(\Nc)$.
%
\par
%
Kondo~\cite{MR1618132} applied this method to the N\'eron-Severi lattice of
a generic Jacobian Kummer surface,
and described its automorphism group.
Since then,
automorphism groups of the following $K3$ surfaces have been determined by this method;
\begin{itemize}
\item[(a)] the supersingular $K3$ surface in characteristic $2$ 
with Artin invariant $1$ by Dolgachev and Kondo~\cite{MR1935564K},
\item[(b)] complex Kummer surfaces of product type
by Keum and Kondo~\cite{MR1806732}, 
\item[(c)] the Hessian quartic surface by Dolgachev and Keum~\cite{MR1897389},
\item[(d)] the singular $K3$ surface $X$ with $\disc T_X=7$ by Ujikawa~\cite{MR3113614}, 
\item[(e)] the supersingular $K3$ surface in characteristic $3$ 
with Artin invariant $1$ by
Kondo and Shimada~\cite{KondoShimada}.
\end{itemize}
The classical two examples of Vinberg~\cite{MR719348} can also be  treated by this method.
\par
In all these examples, the number $|\D|$ of $G$-congruence classes 
of $\RtsLS\sphyp$-chambers is $1$. 
Borcherds~\cite{MR913200} studied the case 
where the orthogonal complement  $R$ of $S$ in $\L$
contains a root sublattice of finite index,
and gave in~\cite[Lemma 5.1]{MR913200}
 a sufficient condition for 
any two  $\RtsLS\sphyp$-chambers 
to be $\OG^+(S)$-congruent.
In particular, the method employed in the above examples  is limited to  $K3$ surfaces $X$
such that $S_X$ can be embedded primitively in $\L$ with $R$ containing  a root lattice
as a sublattice of finite index.
Note that, for example,
only a finite number of isomorphism classes of singular $K3$ surfaces
satisfy this condition.
\par
%
We extend Borcherds' method 
to the situation in which   $|\D|$ is not necessarily $1$.
In fact, we have $|\D|=46$ in Example~\ref{example:24} 
and $|\D|=1098$ in Example~\ref{example:singK3disc11} above.
 Starting from an initial $\RtsLS\sphyp$-chamber,
 we compute $\RtsLS\sphyp$-chambers adjacent to the $\RtsLS\sphyp$-chambers obtained so far successively 
 until no new $G$-congruence classes appear.
 Each $\RtsLS\sphyp$-chamber is expressed by its Weyl vector~(see Definition~\ref{def:Sweylvector}).
 Thus our main algorithm contains sub-algorithms
 that calculate the set of walls of a given  $\RtsLS\sphyp$-chamber~(Algorithm~\ref{algorithm:aSnS}),
 compute the Weyl vector of the adjacent $\RtsLS\sphyp$-chamber across a given wall~(Algorithm~\ref{algorithm:adjacent}),
 and determine whether an $\RtsLS\sphyp$-chamber is $G$-congruent to another 
 $\RtsLS\sphyp$-chamber~(Algorithm~\ref{algorithm:Gequiv}).
 In these algorithms,
 we use  methods given in our previous paper~\cite{MR3166075}. 
 In the  calculation of  the set of walls,
 we also employ the standard algorithm of linear programming~(Algorithm~\ref{algorithm:mindefset}).
%
%
\subsection{The plan of the paper}
In Section~\ref{sec:lattice},
we fix notions and notation about lattices
and hyperbolic spaces.
In Section~\ref{sec:chamber},
we introduce the notion of chamber decomposition of a positive cone
of a hyperbolic lattice,
and present some algorithms about chambers.
In Section~\ref{sec:Conway},
we review Vinberg-Conway theory~(\cite{MR0422505} and~\cite[Chapter 27]{MR1662447})
on the structure of $\Roots_{\L}\sphyp$-chambers in 
the even unimodular hyperbolic lattice $\L=\textrm{II}_{1, n-1}$  of rank $n=10, 18$ and  $26$. 
Then, in Section~\ref{sec:Borcherds},
we introduce the notion of $\RtsLS\sphyp$-chambers
associated with a primitive embedding $S\inj \L$ of an even hyperbolic lattice $S$,
and  present 
some algorithms.
In Section~\ref{sec:main},
we present our main algorithm~(Algorithm~\ref{algo:main}),
and prove its correctness.
In Section~\ref{sec:AutX}, 
we review the Torelli-type theorem for $K3$ surfaces,
and show how to calculate  the automorphism group  from the N\'eron-Severi lattice.
In Section~\ref{sec:geometricapplication},
we explain how to apply Algorithm~\ref{algo:main} to the study of 
$K3$ surfaces.
In particular, 
we describe in detail what geometric data of a $K3$ surface $X$
must be calculated before we apply Algorithm~\ref{algo:main} to $X$.
In Sections~\ref{sec:rho3} and~\ref{sec:singK3}, 
we demonstrate Algorithm~\ref{algo:main} on  some $K3$ surfaces
with Picard number $3$, and some singular $K3$ surfaces.
\par 
\medskip
In this paper, $\aut$ denotes the automorphism group of a lattice theoretic object,
whereas $\Aut$ denotes the geometric automorphism group of a $K3$ surface.
\par 
\medskip
For the actual computation,
we used the {\tt C} library  {\tt gmp}~\cite{gmp}.
\section{Preliminaries on lattices and hyperbolic spaces}\label{sec:lattice}
Let $L$ be a free $\Z$-module of finite rank.
We say that a submodule $M$ of $L$ is \emph{primitive} if $L/M$ is torsion-free,
and 
that $v\in L$ is \emph{primitive} if so is the submodule $\gen{v}:=\Z v\subset L$.
\par
Let $L$ be a lattice;
that is, a free $\Z$-module of finite rank with
a non-degenerate symmetric bilinear form $\intM{\phantom{\cdot}, \phantom{\cdot}}{L}\colon L\times L\to \Z$.
The subscript $L$ in $\intM{\phantom{\cdot}, \phantom{\cdot}}{L}$ is omitted if no confusion will occur.
The symmetric bilinear form 
on $L\tensor\R$ 
obtained from $\intM{\phantom{\cdot}, \phantom{\cdot}}{}$ 
is also denoted by $\intM{\phantom{\cdot}, \phantom{\cdot}}{}$.
We denote by $L\dual:=\Hom(L, \Z)$ the \emph{dual lattice of $L$},
which is naturally embedded in $L\tensor\Q$ by $\intM{\phantom{\cdot}, \phantom{\cdot}}{}$.
We say that $L$ is \emph{unimodular} if $L=L\dual$ holds.
The  \emph{norm} $\intM{v,v}{}$ of $v\in L\tensor \R$ is denoted by $v^2$.
We let the \emph{orthogonal group} $\OG(L)$ of $L$  act on $L$ from the right,
and write the action of $g\in \OG(L)$ on $v\in L\tensor\R$ by $v\mapsto v^g$.
\par
A lattice $L$ is said to be \emph{even} if $x^2\in 2\Z$ holds  for any $x\in L$.
For an even lattice $L$,
we put
$$
\Roots_L:=\set{r\in L}{r^2=-2}.
$$
Elements of $\Roots_L$ are called \emph{$(-2)$-vectors}.
Each $r\in \Roots_L$ defines 
the \emph{reflection}
$$
s_r \colon x\mapsto x+\intM{x, r}{} r,
$$
which is an element of $\OG(L)$.
We denote by $W(L)$ the subgroup of $\OG(L)$ 
generated by all  reflections $s_r$
with respect to  the $(-2)$-vectors $r\in \Roots_L$,
and call it the \emph{Weyl group}.
\par
A lattice $L$ of rank $n>0$ is said to be \emph{negative-definite}  if the signature of the real quadratic space
$L\tensor \R$  is $(0, n)$.
A negative-definite lattice $L$ is said to be a \emph{root lattice}
if $L$ is generated by $\Roots_L$.
\par
A lattice $L$ of rank $n >1$ is said to be \emph{hyperbolic} if the signature of 
$L\tensor \R$  is $(1, n-1)$.
Let $L$ be a hyperbolic lattice.
Then a \emph{positive cone of $L$} is one of the two connected components
of $\shortset{x\in L\tensor\R}{x^2>0}$.
The closure of a positive cone $\PPP_L$ in $L\tensor\R$ is denoted by $\clPPP_L$.
We denote by 
$\clPPPQ_L$
the convex hull of $\clPPP_L\cap(L\tensor\Q)$.
The stabilizer subgroup in $\OG(L)$ of a positive cone is denoted by $\OG^+(L)$.
We have $\OG(L)=\OG^+(L)\times\{\pm 1\}$.
Note that $W(L)$ is contained in $\OG^+(L)$.
\par
The following 
 algorithms  will be used frequently in this paper.
\begin{algorithm}\label{algo:QLc}
Let $Q$ be a positive-definite symmetric matrix of size $n$
with rational entries,
$\ell$ a column vector of length $n$ with rational entries,
and $c$ a rational number.
Then we can calculate the list of all row vectors $x\in \Z^n$
satisfying 
$$
x\,Q\,{}^t x + 2\,  x\, \ell  +c=0
$$
by the method described in~\cite[Section 3.1]{MR3166075}. 
\end{algorithm}
\begin{algorithm}\label{algo:QLcab}
Let $L$ be a hyperbolic lattice,
let $v$ be a vector of $L\tensor \Q$ with $v^2>0$,
let $\alpha$ be a rational number,  and let $d$ be an integer.
Then the finite set
$$
\set{x\in L}{\intM{x, v}{}=\alpha, \;\; \intM{x,x}{}=d}
$$
can be calculated by the method 
described in~\cite[Sections 3.2]{MR3166075}.
\end{algorithm}
\begin{algorithm}\label{algo:isnef}
Let $L$ be a hyperbolic lattice,
let $v, h$ be  vectors of $L\tensor \Q$ such that
$$
\intM{v, h}{}>0, \;\; \intM{h, h}{}>0,  \;\; \intM{v, v}{}>0,
$$
and
let $d$ be a negative integer.
Then the finite set
$$
\set{x\in L}{\intM{v, x}{}<0, \;\; \intM{h, x}{}>0, \;\; \intM{x, x}{}=d}
$$
can be calculated
by the method 
described in~\cite[Sections 3.3]{MR3166075}.
\end{algorithm}
Let $L$ be a hyperbolic lattice of rank $m+1$,
and let $\PPP_L$ be a positive cone of $L$.
The multiplicative group $\R_{>0}$ of positive real numbers
acts on $\clPPP_L\setminus\{0\}$ by scalar multiplication.
We put 
\begin{eqnarray*}
&&\clH_L:=(\clPPP_L\setminus\{0\})/\R_{>0}, \quad \clHQ_L:=(\clPPPQ_L\setminus\{0\})/\R_{>0}, \quad
\H_L:=\PPP_L/\R_{>0},
\\ 
&&\bdrclH_L:=\clH_L\setminus \H_L, \quad \bdrclHQ_L:=\clHQ_L\setminus \H_L,  
\end{eqnarray*}
and denote by $\pi_L \colon \clPPP_L\setminus\{0\}\to \clH_L$ the natural projection.
The space $\H_L$ is naturally endowed with a structure of the hyperbolic $m$-space.
A point of $\bdrclHQ_L$ is called a \emph{rational boundary point}.
For simplicity,
when we are given a subset $T$ of $\clPPP_L$,
we denote by $\pi_L(T)\subset\clH_L$ the image of  $T\setminus\{0\}$ by $\pi_L$.  
For example,  we have $\bdrclHQ_L=\pi_L(\clPPP_L\cap L)$.
A subset $K$ of $\H_L$ is said to be a \emph{linear subspace of $\H_L$}
if there exists a linear subspace $\tilde{K}$ of $L\tensor\R$
such that $K=\pi_L(\PPP_L\cap \tilde{K})$ holds.
Let $b$ be a point of $\bdr\clH_L$.
A linear subspace $K$ of $\H_L$ is said to \emph{pass through $b$ at infinity}
if the linear subspace $\tilde{K}$ of $L\tensor\R$
that satisfies $K=\pi_L(\PPP_L\cap \tilde{K})$ contains a non-zero vector $v$ such that $b=\pi_L(v)$.
\par
We denote by 
$\HHH_L$ the set of hyperplanes of $\PPP_L$.
We put
$$
\NNN_L:=\set{v\in L\tensor \R}{v^2<0}, 
$$
and, for $v\in L\tensor\R$, we put
$$
[v]\sperp:=\set{x\in L\tensor \R}{\intM{x, v}{}=0}
\quand
(v)\sperp:=[v]\sperp\cap \PPP_L.
$$
Since $[v]\sperp$ intersects $\PPP_L$ if and only if $v^2<0$, the map $v\mapsto (v)\sperp$ 
induces a bijection 
from $\NNN_L/\R\sptimes$ to $\HHH_L$. 
For $v\in \NNN_L$, we obtain a hyperplane $\pi_L((v)\sperp)$ of the hyperbolic space $\H_L$.
\par
We recall some properties of \emph{horospheres}.
For the details, 
see  Ratcliffe~\cite[Chapter 4]{MR1299730}.
Let $b$ be a point of $\bdrclH_L$.
Consider the upper halfspace model
\begin{equation}\label{eq:upper}
\set{(z_1, \dots, z_m)\in \R^m}{z_1>0}
\end{equation}
of $\H_L$ such that $b$ corresponds to the infinite point given by $z_1=\infty$.
Then every horosphere $\HS_b$ with the base $b$ is defined by
$z_1=\gamma$
with some positive real constant $\gamma$.
Therefore, 
as a Riemannian submanifold of $\H_L$,
every horosphere is isomorphic to a Euclidean affine space.
If $K$ is a linear subspace of $\H_L$ that passes through $b$ at infinity, 
then $K\cap \HS_b$ is an affine subspace of any horosphere $\HS_b$ with the base $b$.
We say that a horosphere defined by $z_1=\gamma$ is \emph{smaller} than
a horosphere defined by $z_1=\gamma\sprime$ if $\gamma>\gamma\sprime$.
\par
The definition of horospheres can be restated as follows.
Let $f$ be a non-zero vector in $\clPPP_L$ with $f^2=0$ such that $\pi_L(f)=b$.
Then the function  
$$
h_f\colon v\mapsto \intM{v, f}{}^2/v^2
$$
on $\NNN_L\cup \PPP_L\cup (-\PPP_L)$ is invariant under the scaling of $v$ by multiplicative constants,
and hence its restriction to $\PPP_L$
induces a function
$$
\bar{h}_f\colon \H_L \to \R_{>0}.
$$
The horospheres with the base $b$ are exactly the level sets of the function $\bar{h}_f$.
Note that the horosphere defined by  $\bar{h}_f=\alpha$
is smaller than
the horosphere defined by  $\bar{h}_f=\alpha\sprime$ if and only if $\alpha<\alpha\sprime$.
\par
The following lemma should be well-known,
but we could not find appropriate references.
Let $f$ and $b$ be as above.
The function $-h_f(v)=-\intM{v, f}{}^2/v^2$ restricted to $\NNN_L$
measures how far  the hyperplane $\pi_L((v)\sperp)$ of $\H_L$
is from  the boundary point $b$.
For a horosphere $\HS_b$ with the base $b$, 
we put
$$
c(f, \HS_b):=\sup\; \set{-h_f(v)}{v\in \NNN_L, \;\;\pi_L((v)\sperp)\cap \HS_b \ne \emptyset}.
$$
\begin{lemma}\label{lemma:HS}
Suppose that $\HS_b$  is defined by $\bar{h}_f=\alpha$.
Then we have $c(f, \HS_b)\le \alpha$.
\end{lemma}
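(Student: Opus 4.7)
The plan is to unpack the hypothesis geometrically and reduce the inequality to a short two-dimensional computation exploiting the fact that $f$ is isotropic.

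First I would translate the condition $\pi_L((v)\sperp)\cap \HS_b \ne \emptyset$ into linear-algebraic data on $L\tensor\R$: it means that there exists $w\in \PPP_L$ with $\intM{w,v}{}=0$ and $\bar h_f(\pi_L(w))=\alpha$, equivalently $\intM{w,f}{}^2=\alpha\, w^2$. Hence it suffices to prove the pointwise inequality
\[
-\frac{\intM{v,f}{}^2}{v^2} \;\le\; \frac{\intM{w,f}{}^2}{w^2}
\]
for every $v\in \NNN_L$ and every $w\in \PPP_L$ with $\intM{w,v}{}=0$; taking the supremum over $v$ then yields $c(f,\HS_b)\le\alpha$.

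To prove this pointwise inequality I would consider the $2$-plane $\Pi:=\R w + \R v\subset L\tensor \R$. Since $w$ and $v$ are orthogonal with $w^2>0$ and $v^2<0$, the plane $\Pi$ has signature $(1,1)$, so its orthogonal complement $\Pi\sperp$ in $L\tensor\R$ is negative semi-definite. Decompose $f=f\sprime+f\pprime$ with $f\sprime\in\Pi$ and $f\pprime\in\Pi\sperp$. Because $f\pprime{}^2\le 0$ and $f^2=0$, we obtain $f\sprime{}^2\ge 0$. On the other hand, expanding $f\sprime$ in the orthogonal basis $\{w,v\}$ of $\Pi$ gives
\[
f\sprime \;=\; \frac{\intM{f,w}{}}{w^2}\,w \;+\; \frac{\intM{f,v}{}}{v^2}\,v,
\qquad
f\sprime{}^2 \;=\; \frac{\intM{f,w}{}^2}{w^2} \;+\; \frac{\intM{f,v}{}^2}{v^2},
\]
so $\intM{f,w}{}^2/w^2 + \intM{f,v}{}^2/v^2 \ge 0$, which is exactly the desired inequality after rearrangement (note that $v^2<0$ flips no sign here since both sides get multiplied by $w^2>0$).

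There is no real obstacle: the only tiny point to watch is the degenerate case in which $f$ happens to lie in $\Pi$, where $f\pprime=0$ and the inequality becomes an equality, consistent with $c(f,\HS_b)=\alpha$ being attained. A further small check is that one could also reach the supremum via a sequence of $v$'s tending to an isotropic direction orthogonal to $w$; the inequality above holds uniformly in $v$, so taking the supremum is harmless. This completes the proposed proof.
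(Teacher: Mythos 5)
Your proof is correct, and it takes a genuinely different route from the paper's. The paper fixes explicit linear coordinates in which the form is $x_0^2-x_1^2-\cdots-x_m^2$ and $f=(1,1,0,\dots,0)$, normalizes the point $x$ on the hyperplane to $x^2=1$, and grinds out the inequality $\alpha v^2+\intM{v,f}{}^2\le 0$ by summing elementary lower bounds for the quadratic functions $g_i(t)=(v_0-v_1)t^2-2\sqrt{\alpha}\,v_it$; along the way it must treat the case $\intM{v,f}{}=0$ separately and adjust the sign of $v$. Your argument replaces all of this with a single signature observation: the plane $\Pi=\R w+\R v$ spanned by the orthogonal pair $w\in\PPP_L$, $v\in\NNN_L$ has signature $(1,1)$, so $\Pi\sperp$ is negative definite, and writing $f=f\sprime+f\pprime$ with $f\sprime\in\Pi$ gives $0=f^2=f\sprime{}^2+f\pprime{}^2\le f\sprime{}^2$, which upon expanding $f\sprime$ in the orthogonal basis $\{w,v\}$ is exactly the pointwise inequality $-\intM{f,v}{}^2/v^2\le \intM{f,w}{}^2/w^2=\alpha$. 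This is coordinate-free, handles the degenerate case $\intM{f,v}{}=0$ and both signs of $\intM{f,v}{}$ uniformly, and makes transparent when equality occurs (namely when $f\in\Pi$). The only cosmetic remark is that $\Pi\sperp$ is in fact negative \emph{definite}, not merely semi-definite, though semi-definiteness is all you use. Both proofs establish the same reduction to a pointwise inequality over the $v$'s appearing in the supremum, so passing to the supremum is identical in the two arguments.
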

\begin{proof}
We choose  linear coordinates $(x_0, x_1, \dots, x_m)$ of $L\tensor\R$ such that
the quadratic form $x\mapsto x^2$ on $L\tensor \R$ is given by
$$
(x_0, x_1, \dots, x_m)\mapsto x_0^2-x_1^2-x_2^2-\cdots-x_m^2,
$$
such that $\PPP_L$ is contained in the halfspace $x_0> 0$, and such that
$f=(1,1,0,\dots, 0)$.
Let $v=(v_0,  v_1, \dots, v_m)$ be a vector in $\NNN_L$.
Suppose that there  exists a  vector  $x=(x_0, x_1, \dots, x_m)$ in $\PPP_L$
such that $x\in (v)\sperp$ and that $\pi_L(x)\in \HS_b$.
It is enough to show that
\begin{equation}\label{eq:target}
-h_f(v)\le \alpha.
\end{equation}
Rescaling $x$ by a positive real constant, we can assume that $x^2=1$.
If $\intM{v, f}{}=0$, then~\eqref{eq:target} holds.
Suppose that $\intM{v, f}{}\ne 0$.
Replacing $v$ by  $-v$ if necessary, we can assume that $\intM{v, f}{}=v_0-v_1$ is positive.
Since  $\pi_L(x)\in \HS_b$, we have $h_f(x)=\alpha$. 
Since $x\in \PPP_L$, we have $\intM{x, f}{}=x_0-x_1>0$.
Combining these,
we get
$$
x_0-x_1=\sqrt{\alpha}.
$$
Using $\intM{v, x}{}=v_0x_0-v_1 x_1- \dots-v_m x_m=0$, we obtain
$$
x_0=\frac{(v_2x_2+\cdots+v_m x_m)-v_1\sqrt{\alpha}}{v_0-v_1},
\quad
x_1=\frac{(v_2x_2+\cdots+v_m x_m)-v_0\sqrt{\alpha}}{v_0-v_1}.
$$
Combining these with $x^2=1$ and using $v_0\ne v_1$, we get
\begin{equation}\label{eq:gsum}
g_2(x_2)+\cdots +g_m(x_m)+\alpha(v_0+v_1)+(v_0-v_1)=0,
\end{equation}
where
$$
g_i(t)=(v_0-v_1)\,t^2-2\sqrt{\alpha} \,v_i t.
$$
Since $\intM{v, f}{}=v_0-v_1>0$, we have  $g_i(t)\ge -\alpha v_i^2/(v_0-v_1)$ for any $t\in \R$.
Thus we obtain $\alpha v^2+\intM{v, f}{}^2\le 0$ from~\eqref{eq:gsum}.
Since $v^2<0$, we get~\eqref{eq:target}.
\end{proof}
\section{Chamber decomposition}\label{sec:chamber}
Let $L$ be an even hyperbolic lattice
with a fixed positive cone $\PPP_L$.
For a subset $\Delta$ of $\NNN_L=\shortset{v\in L\tensor\R}{v^2<0}$, 
we define a cone $\Sigma_L(\Delta)$ in $L\tensor \R$ by 
\begin{equation}\label{eq:Sigma}
\Sigma_L(\Delta):=\set{x\in L\tensor \R}{\intM{x, v}{}\ge 0\;\;\textrm{for all}\;\; v\in \Delta}.
\end{equation}
\begin{definition}\label{def:chamber}
A closed subset $D$ of $\PPP_L$ is called a \emph{chamber}
if its interior $D\spcirc$ is non-empty and
there exists a subset $\Delta$ of $\NNN_L$  such that $D=\Sigma_L(\Delta)\cap \PPP_L$ holds.
Let $D$ be a chamber.
A hyperplane $(v)\sperp$ of $\PPP_L$ 
is called a \emph{wall of $D$} if $(v)\sperp\cap D\spcirc$ is empty
and $(v)\sperp\cap D$ contains a non-empty open subset of $(v)\sperp$.
 \end{definition}
 \begin{definition}
 Let $\FFF\subset\HHH_L$ be a locally finite family of hyperplanes in $\PPP_L$.
 Then  the closure in $\PPP_L$ of each connected component of
$$
\PPP_L\setminus \bigcup_{(v)\sperp \in \FFF}\, (v)\sperp
$$ 
is a chamber,
which we call an \emph{$\FFF$-chamber}.
\end{definition}
By definition, 
every wall of an $\FFF$-chamber is an element of $\FFF$.
If $D$ and $D\sprime$ are distinct $\FFF$-chambers, then $D\spcirc \cap D\sprime=\emptyset$ holds.
 \begin{definition}
Let $D$ be an $\FFF$-chamber,  and let $(v)\sperp\in \FFF$ be a wall of $D$.
Then there exists a unique $\FFF$-chamber $D\sprime$
such that $D\sprime\ne D$ and that 
$D\cap D\sprime\cap (v)\sperp$ contains a non-empty open subset of $(v)\sperp$.
We say that  $D\sprime$ is  \emph{adjacent to $D$ across the wall $(v)\sperp$}.
\end{definition}
We fix a subgroup $G$ of $\OG^+(L)$ with finite index.
We assume  that $G$ satisfies the following condition of the existence of a membership algorithm:
\begin{itemize}
\item[$\propVG$]
There exists an algorithm by which we can determine,
for a given $g\in \OG^+(L)$,   whether $g\in G$ or not.
\end{itemize}
Let $\VVV$  be a subset of  $\NNN_L\cap L\dual=\shortset{v\in L\dual}{v^2<0}$, and consider the family of hyperplanes
$$
\VVV\sphyp:=\set{(v)\sperp}{v\in \VVV}
$$
in $\PPP_L$.
We assume that  $\VVV$ has the following properties:
\begin{itemize}
\item[$\propVone$] There exists  a positive real number $c\in \R$ such that,  for any $v\in \VVV$, we have $-v^2<c$. 
\item[$\propVtwo$] The set  $\VVV$ is invariant under  the action of $G$ on $\NNN_L \cap L\dual$. 
\end{itemize}
Then we can consider $\VVV\sphyp$-chambers by  the following:
\begin{lemma}\label{lem:locallyfinite}
The family $\VVV\sphyp$ of hyperplanes 
is  locally finite in $\PPP_L$.
\end{lemma}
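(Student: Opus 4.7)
The plan is to show that for every compact subset $K\subset\PPP_L$, only finitely many $v\in\VVV$ have $(v)\sperp\cap K\ne\emptyset$. The key is to reduce the problem to counting lattice points inside a ball with respect to a suitable positive-definite form.

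First, I would introduce the auxiliary quadratic form
$$
\phi_x(v):=\frac{2\,\intM{x,v}{}^2}{x^2}-v^2
$$
for each $x\in\PPP_L$, and check that $\phi_x$ is positive-definite on $L\tensor\R$. This follows from the orthogonal decomposition $v=\lambda x+w$ with $w\in(\R x)\sperp$ (so $w^2\le 0$), which gives $\phi_x(v)=\lambda^2 x^2-w^2$, a sum of two non-negative terms vanishing only when $v=0$. The crucial feature is that when $\intM{x,v}{}=0$, we have $\phi_x(v)=-v^2$, so condition $\propVone$ becomes a bound on $\phi_x(v)$.

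Next, I would fix an arbitrary basepoint $x_0\in K$ and use compactness to pass from the pointwise bound to a uniform one. Since $(x,v)\mapsto \phi_x(v)$ is continuous and strictly positive on the compact set $K\times\{v\in L\tensor\R : \phi_{x_0}(v)=1\}$, it attains a positive minimum $1/C$ there. By homogeneity this gives
$$
\phi_{x_0}(v)\le C\,\phi_x(v)\quad\text{for all }x\in K\text{ and }v\in L\tensor\R.
$$
Hence if $v\in\VVV$ satisfies $\intM{x,v}{}=0$ for some $x\in K$, then $\phi_{x_0}(v)\le C\,\phi_x(v)=-Cv^2<Cc$ by $\propVone$.

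Finally, since $\phi_{x_0}$ is a positive-definite form on the finite-dimensional real vector space $L\tensor\R$ and $L\dual$ is a discrete subgroup, the set $\{v\in L\dual : \phi_{x_0}(v)<Cc\}$ is finite, so only finitely many $v\in\VVV\subset L\dual$ can contribute a hyperplane meeting $K$. This proves local finiteness. The main subtlety is the construction and positive-definiteness of $\phi_x$; note that condition $\propVtwo$ is not needed here and will be used elsewhere. (An alternative route would be to use coordinates $x_0^2-x_1^2-\cdots-x_m^2$ and bound $v$ component-wise using $x\in K$, but the invariant formulation via $\phi_x$ seems cleaner.)
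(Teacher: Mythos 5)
Your proof is correct. The positive-definiteness of $\phi_x$ is verified properly (the factor $2$ is exactly what is needed: with $v=\lambda x+w$, $w\perp x$, one gets $\phi_x(v)=\lambda^2x^2-w^2$, and $w^2\le 0$ because $x\sperp$ is negative-definite), the homogeneity-plus-compactness step giving the uniform comparison $\phi_{x_0}\le C\,\phi_x$ on $K$ is sound, and the conclusion follows from discreteness of $L\dual$. You are also right that $\propVtwo$ plays no role here.

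The route differs from the paper's in how the finiteness is organized. The paper first uses the discreteness of the norms $\shortset{v^2}{v\in L\dual}$ together with $\propVone$ to reduce to finitely many possible values $a=v^2$, and then asserts that for each such $a$ and each compact $J\subset\PPP_L$ the locus $\set{x\in L\tensor\R}{x^2=a,\ (x)\sperp\cap J\ne\emptyset}$ is compact, finishing again by discreteness of $L\dual$. Your argument bypasses the norm-spectrum reduction entirely and only uses the inequality $-v^2<c$; in exchange you supply an explicit positive-definite majorant $\phi_{x_0}$ and a uniform constant $C$, which amounts to proving the compactness statement the paper leaves unproved. The paper's version is shorter to state; yours is more self-contained and quantitative. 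Both are standard and both are valid here.
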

\begin{proof}
Since  $\shortset{v^2}{v\in L\dual}$ is discrete in $\R$,
the subset $\shortset{v^2}{v\in \VVV}$ of the interval $(-c, 0)\subset \R$  is finite by~$\propVone$
and $\VVV\subset \NNN_L\cap L\dual$.
For a negative real number $a$ and a compact subset $J$ of $\PPP_L$,
the locus
$$
\set{x\in L\tensor \R}{x^2=a,  \,\, (x)\sperp\cap J\ne \emptyset}
$$
is compact.
Since  $L\dual$ is discrete in $L\tensor\R$, 
we obtain the proof of Lemma~\ref{lem:locallyfinite}.
\end{proof}
\begin{definition}
Let $D$ be  a $\VVV\sphyp$-chamber.
 A subset $\Delta$ of $\NNN_L\cap L\dual$ is called a \emph{defining set of $D$}
 if $D=\Sigma_{L}(\Delta)\cap \PPP_L$ holds.
 A defining set $\Delta$ of $D$ is said to be \emph{minimal} 
 if the following hold:
 \begin{itemize}
 \item For any $v\in \Delta$, the hyperplane $(v)\sperp$ is a wall of $D$, and 
 \item if $v$ and $v\sprime$ are distinct vectors of $\Delta$, then $(v)\sperp\ne (v\sprime)\sperp$.
 \end{itemize}
 We define two types of minimal defining sets,  each of which is unique 
 for a given $\VVV\sphyp$-chamber $D$.
 The one is called the \emph{$\VVV$-minimal defining set},
 denoted by $\Delta_{\VVV}(D)$, and characterized by the following property:
 %
 $\Delta_{\VVV}(D)$ is a subset of $\VVV$, and  if $v\in \Delta_{\VVV}(D)$, then $\alpha v\notin \VVV$
 for any $\alpha\in \R$ with $0<\alpha<1$.
 %
 The other is called the \emph{primitively minimal defining set},
 denoted by $\Delta_{L\dual}(D)$, and characterized by the following property:
 %
 Every $v\in \Delta_{L\dual}(D)$ is primitive in $L\dual$.
 %
 %
 (Note that $\Delta_{L\dual}(D)$ may not be contained in $\VVV$, because elements of $\VVV$ need not be primitive in $L\dual$.)
 \end{definition}
Let $D$ be 
 a $\VVV\sphyp$-chamber.
 Then $D^g$ is also a $\VVV\sphyp$-chamber for any $g\in G$ 
 by the property~$\propVtwo$.
 For a  $\VVV\sphyp$-chamber $D$,
we put
$$
\autG(D):=\set{g\in G}{D^g=D}.
$$
Let $D$ and $D\sprime$ be $\VVV\sphyp$-chambers.
We say that  $D$ and $D\sprime$ are \emph{$G$-congruent}
if there exists an element $g\in G$ such that $D\sprime=D^g$.
The following simple observation is the key point of our method:
\begin{equation}\label{eq:intpt}
\textrm{
If $g\in G$ satisfies $D^g\cap D\sp{\prime\circ}\ne\emptyset$, 
then $D^g=D\sprime$.}
\end{equation}
In particular, if $g\in G$ satisfies $D^g\cap D\spcirc\ne\emptyset$, then $g\in \autG(D)$.
\begin{example}\label{example:W}
The subset $\Roots_L$ of $\NNN_L\cap L\dual$ has the properties~$\propVone$ and~$\propVtwo$
for $G=\OG^+(L)$.
Let $D$ be an $\Roots_L\sphyp$-chamber.
Then $D$ is a fundamental domain of the action of the Weyl group $W(L)$ on $\PPP_L$, 
and  $W(L)$ 
is generated by the reflections $s_r$, 
where $r$ runs through the $\Roots_L$-minimal defining set $\Delta_{\Roots_L}(D)$
of $D$.
Moreover,  any two $\Roots_L\sphyp$-chambers are $\OG^+(L)$-congruent,
and $\OG^+(L)$ is isomorphic to the semi-direct product
$W(L)\semidirectproduct \aut_{\OG^+(L)}(D)$.
\end{example}
We consider the following properties of $\VVV$:
%
\begin{itemize}
\item[$\propVthree$] 
Any $\VVV\sphyp$-chamber  has a \emph{finite} defining set.
\item[$\propVfour$] For any $\VVV\sphyp$-chamber $D$, 
the set  $\pi_L(\clD )\cap \bdrclH_L$ is contained in $\bdrclHQ_L$,
where  $\clD $ is the closure  of $D$ in $\clPPP_L$.
\end{itemize}
The main results of this section are the following:
\begin{theorem}\label{thm:finiteGequiv}
Suppose that $\VVV$ satisfies $\propVone$-$\propVfour$.
Then there exist only a finite number of $G$-congruence classes of $\VVV\sphyp$-chambers.
\end{theorem}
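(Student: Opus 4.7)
The plan is to combine reduction theory for the arithmetic subgroup $G\subset\OG^+(L)$ with the local finiteness of the family $\VVV\sphyp$ (Lemma~\ref{lem:locallyfinite}) and the horospherical estimate of Lemma~\ref{lemma:HS}. Since $G$ has finite index in $\OG^+(L)$, the hyperbolic orbifold $\H_L/G$ has finite volume and finitely many cusps, so the $G$-orbits on the rational boundary $\bdrclHQ_L$ form a finite set. I would fix representatives $b_1,\dots,b_k$ of these orbits, a fundamental domain $F_G\subset\H_L$ for $G$, and open horoballs $\HB_i$ based at $b_i$ chosen small enough that the ``thick part'' $K:=F_G\smallsetminus\bigcup_i\HB_i$ is compact in $\H_L$.

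By Lemma~\ref{lem:locallyfinite} only finitely many $\VVV\sphyp$-chambers meet the compact set $K$, which handles all $G$-orbits of chambers having a representative meeting $K$. Every remaining orbit is represented by a chamber $D$ contained in some $\HB_i$; since $\HB_i$ touches $\bdrclH_L$ only at $b_i$, property~$\propVfour$ forces $\pi_L(\clD)\cap\bdrclH_L\subset\{b_i\}$. Therefore, for each cusp $b=b_i$ it suffices to bound the number of $G_b$-orbits of $\VVV\sphyp$-chambers contained in $\HB_i$, where $G_b\subset G$ denotes the stabilizer of $b$.

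For this, I would choose a primitive isotropic vector $f\in L$ with $\pi_L(f)=b$ and invoke standard reduction theory for cusp stabilizers: $G_b$ acts cocompactly on every horosphere $\HS_b$ based at $b$. Fix a compact fundamental domain $\Phi$ for $G_b$ on $\HS_b$ at level $\bar h_f=\alpha$. For any $\VVV\sphyp$-chamber $D\subset\HB_i$ and any wall $(v)\sperp$ of $D$ meeting $\HS_b$, we may, after translating $D$ by an element of $G_b$, assume $(v)\sperp\cap\Phi\ne\emptyset$. Lemma~\ref{lemma:HS} combined with $\propVone$ then yields
\[
\intM{v,f}{}^2\;\le\;-\alpha\,v^2\;<\;\alpha\,c,
\]
so $|\intM{v,f}{}|$ is bounded; since $\intM{v,f}{}$ lies in a fixed discrete subgroup of $\Q$ and $-v^2<c$ is bounded, the vectors $v$ defining such walls range over a finite set. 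Hence the walls of these chambers fall into finitely many $G_b$-orbits, and by~$\propVthree$ each chamber has only finitely many walls, so only finitely many $G_b$-orbits of chambers are incident to any given wall orbit, giving the finiteness.

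The main obstacle is the reduction-theoretic step. Two points must be treated carefully: first, that $G_b$ genuinely acts cocompactly on horospheres at $b$ (standard when $G$ is arithmetic in $\OG^+(L)$, but requiring a precise citation and compatibility with~$\propVG$); and second, that every $\VVV\sphyp$-chamber $D\subset\HB_i$ has, up to $G_b$-translation, a wall actually meeting the reference horosphere $\HS_b$, so that the estimate above applies. The latter requires ruling out chambers trapped between two horospheres deep inside $\HB_i$; this can be handled by a further Siegel-set refinement, using Lemma~\ref{lemma:HS} applied to the walls of $D$ at the extrema of $\bar h_f|_D$ together with discreteness of $\intM{\cdot,f}{}$ on $L\dual$ to force such trapped chambers to have $b$ in their closure. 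The quantitative heart of the argument throughout is Lemma~\ref{lemma:HS}, controlling how deeply walls of $\VVV\sphyp$ can penetrate a horoball in terms of their $L\dual$-norms.
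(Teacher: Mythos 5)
Your overall architecture coincides with the paper's: reduction theory supplies a decomposition of (a fundamental domain for) $\PPP_L$ into a compact piece plus finitely many rational cusp neighbourhoods, the compact piece is disposed of by local finiteness of $\VVV\sphyp$ (Lemma~\ref{lem:locallyfinite}), and the cusp neighbourhoods are controlled by Lemma~\ref{lemma:HS}. (The paper packages the reduction-theoretic input as a rational polyhedral cone decomposition, Theorem~\ref{thm:ratpoly}, rather than as a thick--thin decomposition with cocompact cusp stabilizers; for this proof the two are interchangeable.) However, two of your quantitative steps fail as written. First, the inference ``$\intM{v,f}{}^2<\alpha c$ and $-v^2<c$, hence the $v$ range over a finite set'' is false: the set $\shortset{v\in L\dual}{|\intM{v,f}{}|\le B,\ -c<v^2<0}$ is infinite --- for instance, if $e\in L$ satisfies $e^2=-2$ and $\intM{e,f}{}=0$, then every $e+nf$ ($n\in\Z$) lies in it, and these define pairwise distinct hyperplanes through $b$. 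Second, ``each chamber has finitely many walls, so only finitely many $G_b$-orbits of chambers are incident to any given wall orbit'' is a non sequitur: in a locally finite arrangement a single hyperplane can be a wall of infinitely many chambers (the lines $x=0$ and $y=n$, $n\in\Z$, in $\R^2$), so bounding walls per chamber says nothing about chambers per wall.

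Both conclusions are nevertheless true, and the correct mechanism is also what settles your ``trapped chamber'' worry. The real use of Lemma~\ref{lemma:HS} is to shrink the horoball until $\alpha c<1$: then any $v\in\VVV$ whose hyperplane meets $\HB_b$ satisfies $\intM{v,f}{}^2\le-\alpha v^2<\alpha c<1$, and since $\intM{v,f}{}\in\Z$ (because $f\in L$ and $\VVV\subset L\dual$) this forces $\intM{v,f}{}=0$. Hence every hyperplane of $\VVV\sphyp$ meeting $\HB_b$ passes through $b$ at infinity, so $\pi_L(D)\cap\HB_b=\rho_b\inv\bigl(\pi_L(D)\cap\bdrHB_b\bigr)$ for every chamber $D$ meeting $\HB_b$ (this is Corollary~\ref{cor:HS}); in particular no chamber is trapped inside the horoball, and every such $D$ meets the horosphere. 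Now translate $D$ by an element of $G_b$ so that $\pi_L(D)\cap\bdrHB_b$ meets your compact fundamental domain $\Phi$, and conclude by applying local finiteness of $\VVV\sphyp$ to a compact slice of $\pi_L\inv(\Phi)$ --- i.e.\ count chambers meeting a compact set directly, rather than routing through walls. Finally, replace ``every remaining orbit is represented by a chamber contained in some $\HB_i$'' by ``meeting some $\HB_i$'': a chamber meeting $F_G\smallsetminus K$ need not lie inside a horoball, but the argument just described only requires that it meet one.
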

\begin{theorem}\label{thm:finiteautG}
Suppose that $\VVV$ satisfies $\propVone$-$\propVfour$.
Then the  group $\aut_G(D)$ is finite for any  $\VVV\sphyp$-chamber $D$.
\end{theorem}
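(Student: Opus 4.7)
The plan is to reduce the finiteness of $\autG(D)$ to the finiteness of the orthogonal group of a negative-definite lattice, using (V3) to control the ``permutation part'' of the action and (V4) to cut down the configurations in which the ``rotation part'' could a priori be infinite.

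First, by $\propVthree$ the chamber $D$ has only finitely many walls, so the primitively minimal defining set $\Delta_{L\dual}(D)\subset L\dual\cap\NNN_L$ is finite. Every element of $\autG(D)$ permutes the primitive inward-pointing normals in $\Delta_{L\dual}(D)$, yielding a homomorphism from $\autG(D)$ to a finite symmetric group; it therefore suffices to show that its kernel $K$ is finite. By construction, $K$ fixes each $v\in\Delta_{L\dual}(D)$, and hence fixes pointwise the $\Q$-subspace $M\subset L\tensor\Q$ these vectors span. Consequently $K$ preserves the orthogonal complement $M^{\perp}$ together with the positive cone of $M^{\perp}$, and restricts to a subgroup of $\OG^{+}(M^{\perp}\cap L)$.

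I then split into cases according to the signature of $M$. If $M=L\tensor\Q$, then $K$ is trivial. If $M$ contains a positive-norm vector (so $M$ is hyperbolic), then $M^{\perp}$ is negative-definite, $\OG(M^{\perp}\cap L)$ is a finite group, and so is $K$. The remaining, and genuinely delicate, case is the one in which $M\subsetneq L\tensor\Q$ is negative-definite and $M^{\perp}$ is indefinite: here $M^{\perp}\cap L$ is a hyperbolic lattice whose orthogonal group is typically infinite, so a direct argument is blocked and one is forced to invoke $\propVfour$.

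To exploit $\propVfour$, I observe that for every $x\in M^{\perp}$ and every $v\in\Delta_{L\dual}(D)\subset M$ one has $\intM{x,v}{}=0$, so $M^{\perp}\cap\clPPP_L\subset\clD$. In particular, each real isotropic vector $f\in M^{\perp}$ produces a boundary point $\pi_L(f)\in\pi_L(\clD)\cap\bdrclH_L$, which by $\propVfour$ must lie in $\bdrclHQ_L$; equivalently, every real isotropic direction in $M^{\perp}$ is rational. A counting argument now forces $M^{\perp}$ to be either of rank $1$ and positive-definite, or a split rank-$2$ hyperbolic plane $\gen{f_1,f_2}_{\Q}$ with primitive isotropic $f_1,f_2\in L$, since in every other indefinite possibility (rank $\geq 3$ indefinite, or rank $2$ anisotropic) the real isotropic rays of $M^{\perp}\tensor\R$ are uncountable while the rational ones remain countable. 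In the rank-$1$ case, $K$ acts as $\pm 1$ on the sole generator of $M^{\perp}\cap L$ and preserves the positive cone, hence acts as the identity on all of $L\tensor\Q$ and so $K=1$. In the split rank-$2$ case, biorthogonality gives $L\tensor\Q = M\oplus\gen{f_1,f_2}_{\Q}$, so any element of $K$ fixing both $f_1$ and $f_2$ is the identity; since $K$ permutes the two-element set $\{f_1,f_2\}$, we conclude $|K|\leq 2$. In every subcase $K$ is finite, and therefore so is $\autG(D)$.
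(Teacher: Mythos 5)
Your reduction to the finiteness of the kernel $K$ of the permutation action of $\autG(D)$ on the finite set $\Delta_{L\dual}(D)$ is sound, and in the sub-cases you treat, your use of $\propVfour$ (rationality, hence countability, of the isotropic rays of $M^{\perp}$ meeting $\clD$) is correct. The gap is in the trichotomy on the span $M$ of $\Delta_{L\dual}(D)$: after the cases ``$M=L\tensor\Q$'' and ``$M$ contains a vector of positive norm'', the remaining possibility is not that $M$ is negative-\emph{definite} but that $M$ is negative \emph{semi}-definite, and a span of negative-norm vectors in a hyperbolic space can be degenerate. For instance, in signature $(1,2)$ with form $x_0^2-x_1^2-x_2^2$, the vectors $(0,1,0)$ and $(1,1,1)$ both have norm $-1$ and span a plane whose radical is the isotropic line through $(1,0,1)$. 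In this degenerate case $M\cap M^{\perp}=\Q f$ with $f$ isotropic, $M^{\perp}$ is again negative semi-definite with the same radical, and the only isotropic direction of $M^{\perp}\tensor\R$ lying in $\clPPP_L$ is the single ray $\R_{\ge 0}f$; your countability argument therefore produces no contradiction. Yet this is exactly where the conclusion you need can fail: for any $a\in M^{\perp}\cap L$ the Eichler transvection
$$
x\;\longmapsto\; x+\intM{x,f}{}\,a-\intM{x,a}{}\,f-\tfrac{1}{2}a^2\intM{x,f}{}\,f
$$
is an isometry of $L$ lying in $\OG^+(L)$ and fixing $M$ pointwise, so if $\dim M\le \rank L-2$ the pointwise stabilizer of $\Delta_{L\dual}(D)$ in $\OG^+(L)$ is infinite, and so is its finite-index intersection with $G$. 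Hence the degenerate case must be excluded, not merely omitted, and the form of $\propVfour$ you invoke is too weak to exclude it.

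The paper closes precisely this hole with a stronger consequence of $\propVfour$: Lemma~\ref{lem:clSigma} shows that the whole cone $\Sigma_L(\Delta)$ --- not only $\Sigma_L(\Delta)\cap\clPPP_L$ --- is contained in $\clPPP_L$, by joining a hypothetical point of $\Sigma_L(\Delta)\setminus\clPPP_L$ to the open set $D\spcirc$ and observing that the resulting crossing points with $\bdr\clPPP_L$ would give uncountably many points of $\pi_L(\clD)\cap\bdrclH_L$. Since $\Sigma_L(\Delta)$ contains the full linear subspace $M^{\perp}$ and $\clPPP_L$ contains no line, this forces $M^{\perp}=0$ (Lemma~\ref{lem:span}); thus $M=L\tensor\R$ always, $K$ is trivial, and the case analysis collapses to your first case. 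If you add this one argument your proof becomes correct, but then the remaining cases you worked out, while valid, are no longer needed.
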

\subsection{Proof of Theorem~\ref{thm:finiteGequiv}}
We assume that  $\VVV$ satisfies $\propVone$-$\propVfour$.
\begin{definition}
A subset $\Pi$ of $\clPPPQ_L$ is called a \emph{rational polyhedral cone}
if there  exist a finite number of non-zero vectors 
$v_1, \dots, v_n\in \clPPP_L\cap L$ such that 
$$
\Pi=\R_{\ge 0} v_1+ \cdots+ \R_{\ge 0} v_n.
$$
\end{definition}
Recall that $G$ is assumed to be of finite index in $\OG^+(L)$.
We have the following result from the reduction theory of arithmetic subgroups
of $\OG(L)$
(see Ash et al.~\cite[Chapter II, Section 4]{MR2590897} and Sterk~\cite{MR786280}).
\begin{theorem}\label{thm:ratpoly}
There exist a finite number of rational polyhedral cones
$\Pi_1, \dots, \Pi_N$ in $\clPPPQ_L$ such that $\PPP_L$ is equal to 
$$
\bigcup_{g \in G} \bigcup_{i=1}^N \,(\Pi_i^g\cap \PPP_L).
$$
\end{theorem}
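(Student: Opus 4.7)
The plan is to deduce the theorem from the classical polyhedral reduction theory for arithmetic groups acting on a rational Lorentzian cone, which is precisely the content of the two cited references. As a preliminary step, I would reduce the statement from a general finite-index subgroup $G\subset \OG^+(L)$ to the full group $\OG^+(L)$. Choose coset representatives $h_1,\ldots,h_m$ with $\OG^+(L)=\bigsqcup_{j=1}^m G\,h_j$. If rational polyhedral cones $\Pi_1,\ldots,\Pi_N\subset\clPPPQ_L$ satisfy the conclusion of the theorem for $\OG^+(L)$ in place of $G$, then the enlarged finite collection $\{\Pi_i^{h_j}\}_{i,j}$---still finite, still rational polyhedral in $\clPPPQ_L$ since $h_j$ preserves $\clPPP_L\cap L$---covers $\PPP_L$ under $G$: writing an arbitrary $g\in\OG^+(L)$ as $\gamma h_j$ with $\gamma\in G$ gives $\Pi_i^g=(\Pi_i^{h_j})^{\gamma}$. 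So it suffices to treat $G=\OG^+(L)$.

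For $\OG^+(L)$ I would build the covering cusp by cusp. It is a standard finiteness result of reduction theory that the primitive isotropic vectors of $L$ fall into finitely many $\OG^+(L)$-orbits; choose representatives $f_1,\ldots,f_k$. For each $f_s$, the stabilizer $\Gamma_s\subset\OG^+(L)$ of the ray $\R_{>0}f_s$ contains a normal free abelian unipotent subgroup $U_s$ acting on horospheres based at $\pi_L(f_s)$ by Euclidean translations, while $\Gamma_s/U_s$ maps, with finite kernel and cokernel, into the orthogonal group of the negative-definite lattice $\gen{f_s}\sperp/\gen{f_s}$. Using the horosphere description of Section~\ref{sec:lattice} together with Minkowski reduction on $\gen{f_s}\sperp/\gen{f_s}$, one constructs a Siegel-type rational polyhedral cone $\Pi_s\subset\clPPPQ_L$ whose $\Gamma_s$-translates, and hence $\OG^+(L)$-translates, cover a horoball neighborhood of the cusp $\pi_L(f_s)$ in $\H_L$. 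The remaining compact part of $\H_L$ that is bounded away from every cusp has compact preimage in a fundamental slice of $\PPP_L$, and such a compact subset is easily enclosed in a single rational polyhedral cone $\Pi_{\mathrm{core}}\subset\clPPPQ_L$. The finite collection $\{\Pi_1,\ldots,\Pi_k,\Pi_{\mathrm{core}}\}$ then does the job.

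The main obstacle is the rationality of the Siegel cones at each cusp: the natural defining inequality near a cusp involves the \emph{nonlinear} horosphere function $\bar h_{f_s}(v)=\intM{v,f_s}{}^2/v^2$ rather than a linear form, so the level sets are not by themselves rational polyhedral. The standard linearization pairs $f_s$ with a second rational isotropic vector $f_s'\in L\tensor\Q$ such that $\intM{f_s,f_s'}{}>0$, and replaces the horoball condition by the \emph{linear} inequality $\intM{v,f_s}{}\le \alpha\,\intM{v,f_s'}{}$ for a suitable rational $\alpha>0$, together with finitely many rational linear inequalities describing a Minkowski-reduction polytope for the projection of $v$ onto the negative-definite orthogonal complement of $\gen{f_s,f_s'}$. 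These inequalities jointly cut out a rational polyhedral cone in $\clPPPQ_L$, and Lemma~\ref{lemma:HS} provides exactly the quantitative control needed to compare horoball conditions with conditions of the form $\intM{v, w}{}\ge 0$ for $w\in\NNN_L$. Carrying out this linearization uniformly over cusps, and verifying that the various pieces patch together to cover all of $\PPP_L$, is the technical heart of the argument; it is treated in the general arithmetic setting in \cite[Chapter II, Section 4]{MR2590897} and for hyperbolic lattices (in a form essentially identical to what is needed here) in \cite{MR786280}, and I would cite these rather than reprove the reduction theory from scratch.
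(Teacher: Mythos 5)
Your proposal is correct and matches the paper's treatment: the paper gives no proof of Theorem~\ref{thm:ratpoly} at all, but quotes it from the polyhedral reduction theory of arithmetic subgroups with exactly the two citations (Ash et al.~\cite{MR2590897} and Sterk~\cite{MR786280}) that you also defer to for the Siegel-domain construction at the cusps, so your sketch is an expansion of the same argument rather than a different route. The only cosmetic slip is in your finite-index reduction: since the paper's action is on the right, you want the coset decomposition $\OG^+(L)=\bigsqcup_j h_j G$, so that writing $g=h_j\gamma$ gives $\Pi_i^g=(\Pi_i^{h_j})^{\gamma}$.
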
 
Therefore 
Theorem~\ref{thm:finiteGequiv} follows from  the following:
\begin{proposition}\label{prop:Pi}
Let $\Pi$ be a rational polyhedral cone in $\clPPPQ_L$.
Then the number of $\VVV\sphyp$-chambers that intersect $\Pi\cap\PPP_L$ is finite.
\end{proposition}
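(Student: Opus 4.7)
The plan is to prove the stronger statement that only finitely many walls of $\VVV\sphyp$ meet $\Pi\cap\PPP_L$; this suffices, because the chamber decomposition induced on $\Pi\cap\PPP_L$ is controlled by the walls that actually meet it. Since chambers and $\Pi$ are both scale-invariant, I would pass to the hyperbolic picture and set $J:=\pi_L(\Pi)\subset\clHQ_L$, a hyperbolic polytope whose vertices are $\pi_L(v_1),\dots,\pi_L(v_n)$; those $b_i:=\pi_L(v_i)$ with $v_i^2=0$ are finitely many rational cusps of $J$ lying in $\bdrclHQ_L$, and the remaining vertices lie in $\H_L$.

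First I dispose of the compact part. I pick small pairwise disjoint horoballs $\HB_1,\dots,\HB_k$ at these cusps so that $K:=J\setminus\bigcup_i \HB_i$ is compact in $\H_L$; by Lemma~\ref{lem:locallyfinite} only finitely many walls meet $K$. For each cusp $b_i$, let $f=f_i\in L$ be a primitive isotropic vector with $\pi_L(f)=b_i$. If $(v)\sperp$ is a wall with $\intM{v,f}{}\ne 0$, the integrality $\intM{v,f}{}\in\Z\setminus\{0\}$ gives $\intM{v,f}{}^2\ge 1$, while $\propVone$ gives $-v^2<c$, so $-h_f(v)=\intM{v,f}{}^2/(-v^2)>1/c$. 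Lemma~\ref{lemma:HS} then shows that such a wall cannot cross any horosphere at level $\alpha\le 1/c$; since the closure of $(v)\sperp$ in $\clH_L$ is disjoint from $b_i$, once it fails to cross the bounding horosphere of $\HB_i$ it misses $\HB_i$ altogether. Hence after deepening each $\HB_i$ (while keeping it disjoint from the other $b_j$), only walls through $b_i$ can meet $J\cap\HB_i$.

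The main difficulty is then to bound, for a fixed $b_i$, the walls through $b_i$ that actually meet $J\cap\HB_i$, since a priori there are infinitely many such walls. Set $V_i:=\{v\in\VVV:\intM{v,f}{}=0\}$; the quadratic form on $L\tensor\R$ descends to a negative-definite form on $f\sperp/\gen{f}$, where the image of $V_i$ lies in the lattice induced from $L\dual\cap f\sperp$ and, by $\propVone$, consists of vectors of norm in the bounded interval $(-c,0)$. This image is therefore finite. Two vectors of $V_i$ with the same image differ by a rational multiple of $f$, and in the upper halfspace model centered at $b_i$ the corresponding walls form a family of parallel vertical affine hyperplanes. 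Since $b_i$ is the only vertex of $J$ at infinity in this model and $\HB_i$ is disjoint from the other $b_j$, the intersection of $J\cap\HB_i$ with any horosphere at $b_i$ is a bounded polytope, so only finitely many members of each such parallel family actually meet $J\cap\HB_i$. Combining the compact contribution with the finite contributions from the cusps yields finiteness of the walls meeting $J$, and hence the proposition.
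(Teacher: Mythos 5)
Your overall route is the paper's own: decompose $\pi_L(\Pi)$ into a compact core plus small horoballs at the finitely many rational cusps, handle the core by local finiteness (Lemma~\ref{lem:locallyfinite}), and use Lemma~\ref{lemma:HS} together with the integrality bound $\intM{v,f}{}^2\ge 1$ and $\propVone$ to show that a sufficiently deep horoball meets only the walls passing through its base at infinity (this is exactly Corollary~\ref{cor:HS}). Your treatment of the walls through a cusp, via the finite image of $\VVV_b$ in the negative-definite quotient $f\sperp/\gen{f}$ and the resulting finitely many parallel families of vertical hyperplanes, is a correct and somewhat more explicit version of the paper's appeal to the local finiteness of the induced affine arrangement on the horosphere together with the compactness of the shadow $J_i$ (Corollary~\ref{cor:J}). (One small point there: to conclude that each parallel family meets the bounded shadow only finitely often you also need each family to be discrete --- the offsets lie in the rank-one lattice $\R f\cap L\dual$ --- since an accumulating family of parallel hyperplanes would meet a bounded set infinitely often.)

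The one genuine weak point is your opening reduction: you replace the count of chambers by a count of walls and assert in half a sentence that finiteness of the walls meeting $A:=\Pi\cap\PPP_L$ implies finiteness of the chambers meeting $A$. For compact $A$ this is standard, but $A$ is not compact, and a chamber may meet $A$ only along its boundary (for instance when $A$ lies inside a wall), so the implication is not automatic: a priori infinitely many chambers could be glued along a single wall that runs out to a cusp inside $A$. The statement is in fact true for a convex $A$ meeting only finitely many walls $W_1,\dots,W_N$, but proving it requires an argument --- e.g.\ that every chamber meeting $A$ contains, near some $p\in A$, a full region of the finite sub-arrangement $\{W_1,\dots,W_N\}$, and that two such local regions attached to points of the same face of this sub-arrangement and lying in the same region determine the same chamber, because any wall separating them would have to meet the segment joining the two base points and hence meet $A$. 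You should either supply this, or (as the paper does) bypass it by counting chambers directly: inside each horoball the chambers are the $\rho_b$-preimages of the pieces of a locally finite affine arrangement restricted to the compact shadow, by Corollary~\ref{cor:HS}(2).
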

For the proof of Proposition~\ref{prop:Pi},
we need two corollaries of Lemma~\ref{lemma:HS}.
%
Let $b$ be a point of $\bdr\clH_L$.
A \emph{closed horoball}  $\HB_b$ with the base $b$ is a subset of $\H_L$ defined  by 
$z_1\ge \gamma$ with some positive real constant $\gamma$
in the upper halfspace model~\eqref{eq:upper}
with $b$ at $z_1=\infty$.
Let $\bdrHB_b$ be the horosphere defined  by $z_1=  \gamma$.
The map $\rho_b\colon \HB_b\to \bdrHB_b$ defined by  
$$
\rho_b(z_1, z_2, \dots, z_m):=(\gamma, z_2, \dots, z_m)
$$
is called the \emph{natural projection}.
Let $b=\pi_L(f)\in \bdrclHQ_L$  be a rational  boundary point,
where $f$ is a non-zero vector in $\clPPP_L \cap L$ with  $f^2=0$.
We put
$$
\VVV_b:=\set{v\in \VVV}{\intM{v, f}{}=0}.
$$
If  $v\in \VVV$ satisfies $v\notin\VVV_b$,
then we have $\intM{v, f}{}^2\ge 1$ because $f\in L$ and $\VVV\subset L\dual$. 
By the property~$\propVone$, there exists a positive real number $\delta_{b}$ such that
$$
\delta_{b}< -\intM{v,f}{}^2/v^2\quad \textrm{for any $v\in \VVV\setminus \VVV_b$}.
$$
Therefore we obtain the following corollary of Lemma~\ref{lemma:HS}:
\begin{corollary}\label{cor:HS}
Let $b$ be a rational boundary point.
If we choose a sufficiently small closed horoball  $\HB_b$
with the base $b$, then the following hold.
\par
{\rm (1)} 
Let $v$ be an element of $\VVV$.
Then the hyperplane $\pi_L((v)\sperp)$ of $\H_L$ intersects $\HB_b$
if and only if
$\pi_L((v)\sperp)$ passes through $b$ at infinity.
\par
{\rm (2)} 
Let  $D$ be a $\VVV\sphyp$-chamber.
If $b\notin \pi_L(\closure{D})$,
then $\pi_L(D)\cap\HB_b$ is empty,
whereas  
if $b\in \pi_L(\closure{D})$,
then $\pi_L(D)\cap\HB_b=\rho_b\inv(\pi_L(D)\cap\bdrHB_b )$ holds.
\end{corollary}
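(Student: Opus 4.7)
The plan is to derive part~(1) as a direct consequence of Lemma~\ref{lemma:HS} together with the definition of $\delta_b$, and then to reduce part~(2) to part~(1) applied to the finitely many vectors in a defining set of $D$ (which exists by $\propVthree$).

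For part~(1), I would take $\HB_b$ to be the sublevel set $\{\bar{h}_f\le\alpha\}$ with $0<\alpha<\delta_b$; in the upper half-space model with $b$ at $z_1=\infty$, smaller horoballs correspond to smaller $\alpha$, so this is the correct regime. The ``if'' direction is immediate: if $v\in\VVV_b$, the hyperplane $\pi_L((v)\sperp)$ passes through $b$ at infinity and is a vertical Euclidean hyperplane, hence crosses every horosphere based at $b$. For the converse, Lemma~\ref{lemma:HS} forces any horosphere $\{\bar{h}_f=\alpha'\}$ met by $\pi_L((v)\sperp)$ to satisfy $\alpha'\ge -h_f(v)$; but for $v\in\VVV\setminus\VVV_b$ the defining inequality of $\delta_b$ gives $-h_f(v)>\delta_b>\alpha$, which rules out any $\alpha'\le\alpha$. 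Hence $\pi_L((v)\sperp)$ is disjoint from $\HB_b$.

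For part~(2), fix a finite defining set $\Delta\subset\VVV$ of $D$ and keep the same $\HB_b$. Part~(1) guarantees that every $v\in\Delta\setminus\VVV_b$ yields a wall disjoint from $\HB_b$, so by connectedness $\HB_b$ lies entirely on the side of $(v)\sperp$ containing $b$---the side on which $\langle x,v\rangle$ has the sign of $\langle f,v\rangle$. Identifying $\closure{D}$ with $\Sigma_L(\Delta)\cap\clPPP_L$ (valid because $D$ has non-empty interior), the condition $b\in\pi_L(\closure{D})$ becomes $\langle f,v\rangle\ge0$ for every $v\in\Delta$. If this fails, some $v\in\Delta\setminus\VVV_b$ has $\langle f,v\rangle<0$, and $\HB_b$ lies strictly on the opposite side of $(v)\sperp$ from $D$, whence $\pi_L(D)\cap\HB_b=\emptyset$. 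If it holds, then every $v\in\Delta\setminus\VVV_b$ satisfies $\langle f,v\rangle>0$, so its constraint $\langle x,v\rangle\ge0$ is automatic on $\HB_b$ and adds nothing to $D\cap\HB_b$, while each $v\in\Delta\cap\VVV_b$ defines a wall that is vertical in the upper half-space model and hence cuts out a $\rho_b$-invariant half-space. Therefore $\pi_L(D)\cap\HB_b$ is cut out inside $\HB_b$ by $\rho_b$-invariant half-spaces alone, which is precisely the claim $\pi_L(D)\cap\HB_b=\rho_b\inv(\pi_L(D)\cap\bdrHB_b)$.

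The main obstacle is bookkeeping rather than substance: one must keep the sign conventions straight (smaller horoball $\leftrightarrow$ smaller $\alpha$ for $\bar{h}_f$, and the correct side of each wall relative to $b$), and supply the routine verification that $\closure{D}=\Sigma_L(\Delta)\cap\clPPP_L$ for a chamber with non-empty interior. Once these are in hand, the substantive content of the corollary comes entirely from the uniform lower bound $\delta_b$ supplied by $\propVone$ together with Lemma~\ref{lemma:HS}.
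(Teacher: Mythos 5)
Your proposal is correct and follows essentially the same route the paper intends: the uniform bound $\delta_b$ (coming from $\intM{v,f}{}^2\ge 1$ for $v\notin\VVV_b$ together with $\propVone$) combined with Lemma~\ref{lemma:HS} gives part~(1), and part~(2) is the routine consequence via a finite defining set, for which you can take the $\VVV$-minimal one $\Delta_{\VVV}(D)$ to ensure it lies in $\VVV$. The paper leaves these details implicit, and your bookkeeping (sublevel set $\{\bar h_f\le\alpha\}$ as the horoball, sign of $\intM{f,v}{}$ determining the side of each wall containing $\HB_b$, and $\closure{D}=\Sigma_L(\Delta)\cap\clPPP_L$) fills them in correctly.
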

We regard $\bdrHB_b$ as a Euclidean affine space.
Then the family of affine hyperplanes
$$
\set{\pi_L((v)\sperp)\cap \bdrHB_b }{v\in \VVV_b}
$$
of $\bdrHB_b $ is locally finite,
because $\VVV\sphyp$ is locally finite in $\PPP_L$ and $\pi_L\inv(\bdrHB_b )\subset\PPP_L$.
Therefore we obtain the following:
\begin{corollary}\label{cor:J}
Let $b$ and $\HB_b$ be as in Corollary~\ref{cor:HS},
 and let $J$ be a compact subset of $\bdrHB_b $.
Then the  number of $\VVV\sphyp$-chambers 
that intersect the subset $\pi_L\inv (\rho_b\inv (J))$ of $\PPP_L$
is finite.
\end{corollary}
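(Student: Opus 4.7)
The plan is to reduce the finiteness question from the cone $\PPP_L$ to the Euclidean space $\bdrHB_b$, where the local finiteness of the induced affine hyperplane arrangement turns the count into an elementary statement.

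First, I would show that any $\VVV\sphyp$-chamber $D$ that meets $\pi_L\inv(\rho_b\inv(J))$ satisfies $(\pi_L(D) \cap \bdrHB_b) \cap J \ne \emptyset$. Indeed, if $D \cap \pi_L\inv(\rho_b\inv(J))$ is non-empty, then so is $\pi_L(D) \cap \HB_b$, so by Corollary~\ref{cor:HS}(2) we have $b \in \pi_L(\closure{D})$ and $\pi_L(D) \cap \HB_b = \rho_b\inv(\pi_L(D) \cap \bdrHB_b)$. Applying $\rho_b$ to the non-empty subset $\pi_L(D) \cap \rho_b\inv(J)$ of $\pi_L(D) \cap \HB_b$ yields a non-empty subset of $(\pi_L(D) \cap \bdrHB_b) \cap J$, as desired.

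Second, I would argue that the assignment $D \mapsto \pi_L(D) \cap \bdrHB_b$, restricted to $\VVV\sphyp$-chambers $D$ with $b \in \pi_L(\closure{D})$, is injective, with image contained in the collection of closed cells of the locally finite hyperplane arrangement $\{\, \pi_L((v)\sperp) \cap \bdrHB_b : v \in \VVV_b\,\}$ on $\bdrHB_b$. By Corollary~\ref{cor:HS}(1), on the region $\pi_L\inv(\HB_b)$ only the hyperplanes $(v)\sperp$ with $v \in \VVV_b$ can appear as walls, so the trace of the $\VVV\sphyp$-chamber decomposition on $\bdrHB_b$ is exactly the cellular decomposition induced by the $\VVV_b\sphyp$-arrangement. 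Because distinct chambers have disjoint interiors in $\PPP_L$ and $\pi_L(D\spcirc) \cap \bdrHB_b$ (when non-empty) is a full open cell of this arrangement, distinct $D$'s yield distinct cells.

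Finally, since the $\VVV_b\sphyp$-arrangement on $\bdrHB_b$ is locally finite (as noted in the paragraph preceding the statement), the compact set $J$ meets only finitely many of its closed cells. Combined with the injection of the previous step, this forces only finitely many $\VVV\sphyp$-chambers to meet $\pi_L\inv(\rho_b\inv(J))$. The main technical point I expect to spend most effort on is justifying that $\pi_L(D\spcirc) \cap \bdrHB_b$ is a single full open cell of the $\VVV_b\sphyp$-arrangement, rather than being broken up or lying on a lower-dimensional stratum; this should follow from the cylindrical description $\pi_L(D) \cap \HB_b = \rho_b\inv(\pi_L(D) \cap \bdrHB_b)$ in Corollary~\ref{cor:HS}(2) together with the constancy of the sign vector with respect to $\VVV_b$ on $D\spcirc$.
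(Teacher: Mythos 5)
Your proposal is correct and follows essentially the same route as the paper: the statement is deduced from Corollary~\ref{cor:HS} (the cylindrical description $\pi_L(D)\cap\HB_b=\rho_b\inv(\pi_L(D)\cap\bdrHB_b)$ and the fact that only hyperplanes from $\VVV_b$ meet the horoball) together with the local finiteness of the induced affine arrangement on $\bdrHB_b$ and the compactness of $J$. The paper leaves these details implicit (``Therefore we obtain the following''), and your write-up, including the resolution of the ``single full open cell'' point via convexity and the constancy of the sign vector on $D\spcirc$, fills them in correctly.
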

\begin{proof}[Proof of Proposition~\ref{prop:Pi}]
Let $v_1, \dots, v_n\in \clPPP_L\cap L$ be non-zero vectors
such that rational polyhedral cone $\Pi$ is equal to $\R_{\ge 0} v_1+ \cdots+ \R_{\ge 0} v_n$.
We number $v_1, \dots, v_n$ in such a way that 
$$
v_1^2=\cdots=v_k^2=0
\quand
v_{k+1}^2>0, \;\; \dots, \;\;  v_{n}^2>0.
$$
Let $b_i$ be the rational boundary point $\pi_L(v_i)$ for $i=1, \dots, k$,
and let $\HB_i$ be a sufficiently small closed horoball with the base $b_i$.
The natural projection is denoted by $\rho_i :\HB_i\to \bdrHB_i$.
We put  $\HB_i\spcirc :=\HB_i\setminus \bdrHB_i$.
Since $\pi_L(\Pi)\cap\bdr\clH_L=\{b_1, \dots, b_k\}$,
we see that 
$$
\pi_L(\Pi)\sprime:=\pi_L(\Pi)\setminus\bigcup_{i=1}^k (\HB_i\spcirc \cap \pi_L(\Pi))
$$
is a compact subset of $\H_L$.
Therefore the number of $\VVV\sphyp$-chambers $D$
such that $\pi_L(D)$ intersects $\pi_L(\Pi)\sprime$ is finite.
For $j\ne i$,
let $p_{i, j}\in \bdrHB_i$ denote  the intersection point 
of $\bdrHB_i$ and the geodesic line in $\H_L$ passing through $b_i$ at infinity and 
passing through $\pi_L(v_j)$ possibly at infinity.
Then the convex hull $J_i$ of these points $p_{i, j}$ with $j\ne i$
in the Euclidean affine space $\bdrHB_i$ is compact and satisfies
$\HB_i \cap \pi_L(\Pi)=\rho_i\inv (J_i)$.
Consequently,  the number of $\VVV\sphyp$-chambers $D$
such that $\pi_L(D)$ intersects $\HB_i \cap \pi_L(\Pi)$
is finite by Corollary~\ref{cor:J}.
\end{proof}
\subsection{Proof of Theorem~\ref{thm:finiteautG}}
We continue to assume that  $\VVV$ satisfies $\propVone$-$\propVfour$.
 \begin{lemma}\label{lem:clSigma}
 Let
 $\Delta$ be a subset of $\NNN_L\cap L\dual$ such that $D=\Sigma_L(\Delta)\cap\PPP_L$ is
 a  $\VVV\sphyp$-chamber.
 Then $\Sigma_L(\Delta)$ is contained  in $\clPPP_L$.
 \end{lemma}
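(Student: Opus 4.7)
Plan: I would argue by contradiction. Suppose $\Sigma_L(\Delta)\not\subseteq\clPPP_L$ and choose $x\in\Sigma_L(\Delta)\setminus\clPPP_L$. Because $D$ has non-empty interior, pick $y\in D\spcirc$, so that $\intM{y,v}{}>0$ for every $v\in\Delta$ and $y^2>0$. The segment $y_t:=(1-t)y+tx$, $t\in[0,1]$, lies in the convex cone $\Sigma_L(\Delta)$, and it cannot pass through the origin: if $y_t=0$ for some $t\in(0,1]$, then $x=-\frac{1-t}{t}y$, which gives $\intM{x,v}{}=-\frac{1-t}{t}\intM{y,v}{}<0$ for any $v\in\Delta$, contradicting $x\in\Sigma_L(\Delta)$. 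Since $y_0=y\in\PPP_L$ and $y_1=x\notin\clPPP_L$, continuity produces a first $t^*\in(0,1)$ with $y_{t^*}\in\bdr\clPPP_L\setminus\{0\}$; that is, $y_{t^*}$ is a non-zero isotropic vector on the upper null cone. A short check shows that $\Sigma_L(\Delta)\cap\clPPP_L$ coincides with $\clD$, since any point of the left-hand side can be connected by a segment to $y\in D\spcirc$ whose interior lies in $D$. Hence $y_{t^*}\in\clD$, and by property $\propVfour$ we have $\pi_L(y_{t^*})\in\pi_L(\clD)\cap\bdr\clH_L\subseteq\bdr\clHQ_L$, so $y_{t^*}$ is a positive real multiple of a primitive isotropic vector $f\in L$. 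Set $b:=\pi_L(f)$.

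I would then analyze the local geometry at the rational boundary point $b$. Using $\propVthree$, we may assume $\Delta$ is finite; then $\Sigma_L(\Delta)$ is a finitely-generated polyhedral cone with finitely many extreme rays, and extreme rays outside $\clPPP_L$ correspond to ``ultra-ideal'' vertices of $D$. By Corollary~\ref{cor:HS}, choose a sufficiently small closed horoball $\HB_b$ at $b$ so that (i) the only hyperplanes $(v)\sperp$ for $v\in\VVV$ meeting $\HB_b$ are those with $\intM{v,f}{}=0$, and (ii) $\pi_L(D)\cap\HB_b=\rho_b\inv(\pi_L(D)\cap\bdrHB_b)$. The inequalities $\intM{\cdot,v}{}\ge 0$ for $v\in\Delta$ with $\intM{v,f}{}>0$ are strictly satisfied in a neighborhood of $f$, so the local shape of $\Sigma_L(\Delta)$ near $f$ is controlled only by the walls through $b$, i.e., by $\Delta_b:=\shortset{v\in\Delta}{\intM{v,f}{}=0}$.

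The crucial and most delicate step is to convert the spacelike vector $y_t\in\Sigma_L(\Delta)$ (for $t$ slightly larger than $t^*$, where $y_t^2<0$) into a violation of $\propVfour$. The idea is to take translates $y_t+\lambda f$ for $\lambda\ge 0$: these remain in $\Sigma_L(\Delta)$, because $\intM{f,v}{}\ge 0$ for each $v\in\Delta$. The 2-plane spanned by $f$ and $y_t$ inside $\Sigma_L(\Delta)$ is Lorentzian (it contains the null vector $f$ and the spacelike vector $y_t$), so it contains a second null direction $f\sprime$; tracking $f\sprime$ via the cylindrical structure of Corollary~\ref{cor:HS}(2) and the horosphere $\bdrHB_b$, one shows that $f\sprime$ represents an additional ideal boundary point of $\clD$. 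By $\propVfour$ this point must be rational, but the spacelike character of $y_t$ combined with the local description of $\Sigma_L(\Delta)$ near $f$ forces this second boundary point to be an extreme direction of the cone that does not correspond to any rational null vector, yielding the required contradiction. The main obstacle is precisely this last geometric step: carefully isolating the second null direction in the Lorentzian 2-plane generated by $f$ and $y_t$, and showing that its projection to $\bdr\clH_L$ lies outside $\bdr\clHQ_L$.
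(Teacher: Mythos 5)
Your first paragraph is correct and coincides with the opening of the paper's own argument: from $x\in\Sigma_L(\Delta)\setminus\clPPP_L$ and $y\in D\spcirc$ one produces a point of $\clD\cap(\bdr\clPPP_L\setminus\{0\})$, and $\propVfour$ forces its class in $\bdrclH_L$ to be rational. But that is not yet a contradiction --- $\propVfour$ explicitly \emph{allows} rational ideal points --- and the route you then take to manufacture one does not work. The decisive assertion of your last paragraph, that the second null direction $f\sprime$ of the Lorentzian plane spanned by $f$ and $y_t$ ``does not correspond to any rational null vector,'' is not proved (you flag it yourself as the main obstacle), and it is wrong as a mechanism: nothing prevents $\Sigma_L(\Delta)$ from protruding out of $\clPPP_L$ through a spacelike region lying between two \emph{rational} null rays, in which case your construction only yields a second rational point of $\pi_L(\clD)\cap\bdrclH_L$, again perfectly consistent with $\propVfour$. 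No analysis of a single segment, a single horoball, or a single two-plane can detect the obstruction, because finitely many (indeed countably many) rational ideal points are always permitted by $\propVfour$.

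What $\propVfour$ does forbid is \emph{uncountably} many ideal points, since $\bdrclHQ_L$ --- the set of classes of non-zero isotropic vectors in $\clPPP_L\cap(L\tensor\Q)$ --- is countable; this is the paper's argument, and your setup already contains everything needed for it. Fix $x_0\in\Sigma_L(\Delta)\setminus\clPPP_L$, but let $y$ vary over a non-empty open subset $U$ of $D\spcirc$ instead of being held fixed. Each segment $\lineseg{x_0y}$ lies in the convex set $\Sigma_L(\Delta)$, its intersection with $\PPP_L$ lies in $D$, and it meets $\bdr\clPPP_L=\shortset{z\in\clPPP_L}{z^2=0}$ in a point $z(y)\in\clD$. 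Because $U$ is open, the union of these segments contains a non-empty relatively open subset of the null cone, so the points $z(y)$ sweep out uncountably many distinct rays and $\pi_L(\shortset{z(y)}{y\in U})$ is an uncountable subset of $\pi_L(\clD)\cap\bdrclH_L$. This contradicts $\propVfour$. Neither $\propVthree$ nor the horoball corollaries enter the proof.
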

 \begin{proof}
 Note that $\Sigma_L(\Delta)$ is a closed convex subset of $L\tensor\R$.
 Suppose that there exists an element $x_0\in \Sigma_L(\Delta)$
 such that $x_0\notin\clPPP_L$.
 Let $U$ be a non-empty open subset of $D\spcirc$.
 Then,
 for any $y\in U$,
 the line segment $\lineseg{x_0 y}$  of $L\tensor\R$ connecting $x_0$ and $y$
 is contained in $\Sigma_L(\Delta)$,
 and $\lineseg{x_0 y}\cap \PPP_L$  is contained in  $D$.
 Hence the intersection point $z(y)$ of $\lineseg{x_0 y}$ and 
 $\bdr\clPPP_L=\shortset{x\in \clPPP_L}{x^2=0}$ 
 belongs to  the closure $\overline{D}$ of $D$ in $\clPPP_L$.
 Since $U$ is open,
 the subset
 $\pi_L(\shortset{z(y)}{y\in U})$
 of $\pi_L(\overline{D})\cap\bdrclH_L$ has uncountably many points,
 which contradicts the property~$\propVfour$ of $\VVV$.
 \end{proof}
 \begin{lemma}\label{lem:span}
 Let $D$ be a $\VVV\sphyp$-chamber.
 Then any defining set $\Delta$ of $D$ spans $L\tensor\R$.
 \end{lemma}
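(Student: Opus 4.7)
The plan is to argue by contradiction: if $\Delta$ failed to span $L\tensor\R$, then the cone $\Sigma_L(\Delta)$ would contain an entire affine line, and this line would be forced into $\clPPP_L$ by the preceding Lemma~\ref{lem:clSigma}, which is impossible since $\clPPP_L$ meets $-\clPPP_L$ only at the origin.

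More concretely, suppose that the $\R$-span of $\Delta$ is a proper subspace of $L\tensor\R$. Then I can choose a non-zero $w\in L\tensor\R$ with $\intM{w,v}{}=0$ for every $v\in\Delta$. By the very definition of $\Sigma_L(\Delta)$ in~\eqref{eq:Sigma}, for any $x\in \Sigma_L(\Delta)$ and any $t\in\R$ the point $x+tw$ still satisfies $\intM{x+tw,v}{}=\intM{x,v}{}\ge 0$ for all $v\in\Delta$, so the whole line $x+tw$, $t\in\R$, is contained in $\Sigma_L(\Delta)$.

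Next I pick a point $x$ in the non-empty interior $D\spcirc$. By Lemma~\ref{lem:clSigma} the entire line $x+tw$ lies in $\clPPP_L$. Since $\clPPP_L$ is a closed convex cone, for $t>0$ the vector $(x+tw)/t=x/t+w$ is in $\clPPP_L$, and letting $t\to+\infty$ together with the closedness of $\clPPP_L$ yields $w\in\clPPP_L$. The same argument with $t\to-\infty$ yields $-w\in\clPPP_L$. But $\clPPP_L$ is the closure of one of the two connected components of $\shortset{x\in L\tensor\R}{x^2>0}$, and it is well known (and easy to check by choosing linear coordinates in which the quadratic form is $x_0^2-x_1^2-\cdots-x_m^2$ and $\PPP_L\subset\{x_0>0\}$) that $\clPPP_L\cap(-\clPPP_L)=\{0\}$. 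Hence $w=0$, contradicting our choice.

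This closes the argument. The only slightly delicate point is the invocation of Lemma~\ref{lem:clSigma}, which is what rules out the line $x+tw$ escaping from $\clPPP_L$; everything else is elementary linear algebra in a hyperbolic space. No separate case analysis on the sign of $w^2$ is needed, since the scaling-and-limit argument produces the contradiction uniformly.
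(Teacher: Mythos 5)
Your proof is correct and follows essentially the same route as the paper: both rest on Lemma~\ref{lem:clSigma} to force the orthogonal complement of the span of $\Delta$ into $\clPPP_L$, and both conclude from $\clPPP_L\cap(-\clPPP_L)=\{0\}$. The paper is just terser, observing directly that $V\sperp\subset\Sigma_L(\Delta)$ (the defining inequalities hold with equality), which makes your line-plus-limit step unnecessary but not wrong.
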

 \begin{proof}
 Let $V$ be the linear subspace  of $L\tensor \R$ spanned by $\Delta$.
 The orthogonal complement $V\sperp$ of $V$  in $L\tensor\R$
 is contained in $\Sigma_L(\Delta)$ by definition and hence $V\sperp\subset\clPPP_L$ by 
 Lemma~\ref{lem:clSigma}.
 This holds only when $V\sperp=0$.
\end{proof}
 Theorem~\ref{thm:finiteautG} is now easy to prove by $\propVthree$. 
 We give, however,  a proof based on 
 an algorithm (Algorithm~\ref{algorithm:autG}) to compute $\aut_G(D)$.
\begin{lemma}\label{lem:iswall}
Suppose that a defining set $\Delta_1$ of a $\VVV\sphyp$-chamber $D$
satisfies the following;
if $v, v\sprime\in \Delta_1$ are distinct,
then $(v)\sperp\ne (v\sprime)\sperp$ holds.
Let $v$ be an element of $\Delta_1$.
{\rm (1)}
The hyperplane $(v)\sperp$ is a wall of $D$ if and only if 
$\Sigma_L(\Delta_1) \ne \Sigma_L(\Delta_1\setminus \{v\})$.
{\rm (2)}
If $\Delta_1\setminus\{v\}$ does not span $L\tensor \R$,
then $(v)\sperp$ is a wall of $D$.
\end{lemma}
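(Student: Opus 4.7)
Since $\Sigma_L(\Delta_1) \subseteq \Sigma_L(\Delta_1 \setminus \{v\})$ always, part (1) is an equivalence. For the implication ``$(v)\sperp$ a wall $\Rightarrow$ the two cones differ,'' I begin with the non-empty open subset $U \subset (v)\sperp \cap D$ of $(v)\sperp$ furnished by the definition of wall. For each $v\sprime \in \Delta_1 \setminus \{v\}$, the hypothesis $(v\sprime)\sperp \ne (v)\sperp$ forces $U \cap (v\sprime)\sperp$ to be a proper affine subspace of $U$; since $\Delta_1 \subset L\dual$ is countable, a Baire-category argument produces $y_0 \in U$ with $\intM{y_0, v\sprime}{} > 0$ for every $v\sprime \in \Delta_1 \setminus \{v\}$. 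For small $\epsilon > 0$ the perturbation $y_0 + \epsilon v$ then satisfies $\intM{y_0 + \epsilon v, v}{} = \epsilon v^2 < 0$ while $\intM{y_0 + \epsilon v, v\sprime}{} > 0$ for $v\sprime \ne v$ by continuity, producing a point of $\Sigma_L(\Delta_1 \setminus \{v\}) \setminus \Sigma_L(\Delta_1)$.

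For the converse, fix a witness $x \in \Sigma_L(\Delta_1 \setminus \{v\})$ with $\intM{x, v}{} < 0$ and a base point $y \in D\spcirc$; openness of $D\spcirc$ in $\PPP_L$ forces $\intM{y, v\pprime}{} > 0$ for every $v\pprime \in \Delta_1$. On the segment $z_s := (1-s)y + sx$, the function $s \mapsto \intM{z_s, v}{}$ is strictly decreasing from positive to negative and vanishes at a unique $t \in (0,1)$; at that $t$ one has $\intM{z_t, v\sprime}{} > 0$ for all $v\sprime \in \Delta_1 \setminus \{v\}$, so $z_t \in \Sigma_L(\Delta_1) \cap [v]\sperp$, which by Lemma~\ref{lem:clSigma} gives $z_t \in \clPPP_L$. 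As $y$ varies over the open set $D\spcirc$, the points $z_t(y)$ range over an open subset of $[v]\sperp$ (the projection $y \mapsto z_t(y)$ along direction $y-x$ is a submersion since $\intM{x,v}{} \ne 0$), and the quadratic form $z \mapsto z^2$ on $[v]\sperp$ is not identically zero, so one can arrange $z_t(y)^2 > 0$, i.e., $z_t(y) \in \PPP_L$. A neighborhood of such a $z_t(y)$ in $[v]\sperp$ then lies in $(v)\sperp \cap D$ by continuity, so $(v)\sperp$ is a wall of $D$.

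Part (2) will reduce to (1). By Lemma~\ref{lem:span}, $\Delta_1$ spans $L \tensor \R$; if $\Delta_1 \setminus \{v\}$ fails to span, it spans a hyperplane $H \subset L \tensor \R$ with $v \notin H$. The orthogonal complement $H\sperp$ of $H$ in $L \tensor \R$ is one-dimensional by non-degeneracy of the form, and every non-zero $u \in H\sperp$ has $\intM{u, v}{} \ne 0$ since otherwise $v \in (H\sperp)\sperp = H$. Choosing $u$ with $\intM{u, v}{} > 0$, the vector $-u$ lies in $\Sigma_L(\Delta_1 \setminus \{v\}) \setminus \Sigma_L(\Delta_1)$, and (1) finishes. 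The main obstacle is in the second paragraph: arranging $z_t \in \PPP_L$ instead of merely $z_t \in \clPPP_L$, since a single pair $(y,x)$ may land $z_t$ on the null cone; exploiting the openness of $D\spcirc$ and the non-triviality of the quadratic form on $[v]\sperp$ circumvents this.
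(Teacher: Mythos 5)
Your argument is correct, and the engine of the substantive direction of (1) is the same as the paper's: join the witness $x$ of $\Sigma_L(\Delta_1)\ne\Sigma_L(\Delta_1\setminus\{v\})$ to an interior point of $D$ by a line segment, locate the crossing point with $[v]\sperp$, note that all the inequalities indexed by $\Delta_1\setminus\{v\}$ are strict there, and control the norm via Lemma~\ref{lem:clSigma}. The logical organization differs, though. The paper argues by contradiction: assuming $(v)\sperp$ is not a wall, $\Delta_1\setminus\{v\}$ is itself a defining set, so Lemma~\ref{lem:clSigma} applies to it and places $x_0$ in $\clPPP_L$; the crossing point then lands in $\PPP_L$ immediately. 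You apply Lemma~\ref{lem:clSigma} to $\Delta_1$ instead, which only yields $z_t\in\clPPP_L$, and you compensate with a genuine extra idea: the central projection from $x$ carries the open set $D\spcirc$ onto an open subset of $[v]\sperp$, which cannot lie in the zero locus of the (non-degenerate, hence non-vanishing) quadratic form on $[v]\sperp$, so some $z_t(y)$ has positive norm. This makes your proof direct rather than by contradiction, at the cost of a longer argument. Your ``only if'' direction (Baire category on the countably many hyperplanes, then perturbation by $\epsilon v$) and your part (2) (an explicit witness $-u$ orthogonal to the span of $\Delta_1\setminus\{v\}$, reducing to (1)) are also correct and more explicit than the paper's, which declares the former obvious and deduces the latter from Lemma~\ref{lem:span} via the remark that a non-wall may be dropped from the defining set.

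Two small caveats. In the two places where you conclude ``by continuity'' that a single $\epsilon$ or a single neighborhood works simultaneously for every $v\sprime\in\Delta_1\setminus\{v\}$, this is immediate only when $\Delta_1$ is finite; for an infinite defining set one must argue, as the paper does, that by local finiteness only finitely many hyperplanes matter near the point in question. (In the intended application, Algorithm~\ref{algorithm:mindefset}, $\Delta_1$ is finite, so nothing is lost.) Also, you verify only half of the definition of a wall: one should add that $(v)\sperp\cap D\spcirc=\emptyset$, which is immediate because $D\subset\Sigma_L(\Delta_1)$ lies in the closed half-space $\intM{\cdot, v}{}\ge 0$.
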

\begin{proof}
 The `only if' part of  (1) is obvious by the assumption on $\Delta_1$.
Conversely, suppose that $\Sigma_L(\Delta_1) \ne \Sigma_L(\Delta_1\setminus \{v\})$.
 Then 
 there exists a vector  $x_0\in L\tensor\R$ such that 
$\intM{v, x_0}{}<0$ and $x_0\in \Sigma_L(\Delta_1\setminus \{v\})$.
 Assume  that $(v)\sperp$ is not a wall of $D$.
Then  $D$ is equal to  $\Sigma_L(\Delta_1\setminus \{v\})\cap \PPP_L$,
and hence $\Sigma_L(\Delta_1\setminus \{v\})$ is contained in $\clPPP_L$
 by Lemma~\ref{lem:clSigma}.
 In particular, we have $x_0\in \clPPP_L$.
Let $y_0$ be a point of $D\spcirc$.
Then the intersection point  $z_0$ of the line segment $\lineseg{x_0y_0}$
and $(v)\sperp$ satisfies $z_0^2> 0$, $\intM{v, z_0}{}=0$ and $\intM{v\sprime, z_0}{}>0$
for any $v\sprime$ in $\Delta_1\setminus \{v\}$.
Since $\VVV\sphyp$ is locally finite in $\PPP_L$, 
these  mean that a sufficiently small open neighborhood of $z_0$ in $(v)\sperp$ is 
contained in $D$.
In anyway,
$(v)\sperp$ is a wall of $D$.
Thus (1) is proved.
If $(v)\sperp$ is not a wall of $D$, then $\Delta_1\setminus\{v\}$ is also a defining set of $D$.
Hence (2) follows from  Lemma~\ref{lem:span}.
 \end{proof}
 \begin{algorithm}\label{algorithm:mindefset}
Let $\Delta$ be a \emph{finite} defining set of  a $\VVV\sphyp$-chamber $D$.
This algorithm calculates  the primitively minimal defining set $\Delta_{L\dual}(D)$ of $D$.

\step{0}. We set $\Delta_1:=\{\}$ and $\Delta_2:=\{\}$.

\step{1}.
For each element $v\in \Delta$,
we calculate the  maximal positive integer $a_v$ such that
$v/a_{v}\in L\dual$,
and append $v/a_{v}$ to the set $\Delta_1$.
Then we have $D=\Sigma_L(\Delta_1)\cap \PPP_L$,
and moreover, if $v, v\sprime\in \Delta_1$ are distinct,
then $(v)\sperp\ne (v\sprime)\sperp$ holds.

\step{2}.
For each $v\in \Delta_1$, we carry out the following computation.
Suppose that $\Delta_1\setminus\{v\}$  does not span $L\tensor \R$.
Then $(v)\sperp$ is a wall of $D$  by Lemma~\ref{lem:iswall},
and we append $v$ to $\Delta_2$.
Suppose that $\Delta_1\setminus\{v\}$  spans $L\tensor \R$.
%
Then 
we can solve the following  problem of linear programming,
in which the variable $x$ ranges through  the vector space $L\tensor\Q$:
\begin{equation}\label{eq:LP}
\begin{cases}
\textrm{minimize} &  \intM{v, x}{}\\
\textrm{subject to} & \intM{v\sprime, x}{}\ge 0 \;\; \textrm{for all}\;\; v\sprime \in \Delta_1\setminus \{v\}.
\end{cases}
\end{equation}
(See, for example,  Chv\'atal~\cite{MR717219} for the algorithms  of linear programming.)
Note that the solution is either $0$ or unbounded to $-\infty$.
If the solution is $0$, then $(v)\sperp$ is not a wall of $D$
by Lemma~\ref{lem:iswall}.
Suppose that the solution is unbounded to $-\infty$.
Then there exists a vector $x_0\in L\tensor\R$ such that 
$\intM{v, x_0}{}<0$ and $x_0\in \Sigma_L(\Delta_1\setminus \{v\})$.
Hence $(v)\sperp$ is a wall of $D$ by Lemma~\ref{lem:iswall}, and 
we append $v$ to the set $\Delta_2$.

\step{3}. We then output $\Delta_2$ as $\Delta_{L\dual}(D)$.
 \end{algorithm}
Remark that,
for any $\VVV\sphyp$-chamber $D$,
the primitively minimal defining set  $\Delta_{L\dual}(D)$ 
 is finite by the property $\propVthree$ of  $\VVV$ and Algorithm~\ref{algorithm:mindefset}.
 We use the obvious brute-force method based on the finiteness of $\Delta_{L\dual}(D)$ in the following two algorithms.
 \begin{algorithm}\label{algorithm:autG}
 Suppose that the primitively minimal defining set 
 $\Delta_{L\dual}(D)$ of a $\VVV\sphyp$-chamber $D$ is given.
 This algorithm calculates all elements of $\autG(D)$.
Let $\Delta_{L\dual}(D)^l$ denote 
the set of ordered $l$-tuples of distinct elements of $\Delta_{L\dual}(D)$,
 where $l:=\rank  L$.
 By Lemma~\ref{lem:span},
 there exists an $l$-tuple $[v_1, \dots, v_l]\in \Delta_{L\dual}(D)^l$
 that forms a basis of $L\tensor \Q$.
 We set $A:=\{\}$.
 For each $[v_1\sprime, \dots, v_l\sprime]\in \Delta_{L\dual}(D)^l$,
 we calculate the linear transformation $g$ of $L\tensor \Q$ such that
 $$
 v_i^g=v_i\sprime \qquad (i=1, \dots, l).
 $$
 Recall that we can determine whether $g\in G$ or not by the assumption $\propVG$.
 If $g$ belongs to $G$ and induces a permutation of $\Delta_{L\dual}(D)$, then we append $g$ to $A$.
 When this calculation is done for all $[v_1\sprime, \dots, v_l\sprime]\in \Delta_{L\dual}(D)^l$,
 the set $A$ is equal to $\autG(D)$.
 \end{algorithm}
 \begin{proof}[Proof of Theorem~\ref{thm:finiteautG}]
Since $\Delta_{L\dual}(D)$ is finite, 
Algorithm~\ref{algorithm:autG} terminates in  finite steps.
\end{proof}
 \begin{algorithm}\label{algorithm:Gequiv}
 Let $D$ and $D\sprime$ be  $\VVV\sphyp$-chambers.
 Suppose that $\Delta_{L\dual}(D)$ and $\Delta_{L\dual}(D\sprime)$ are given.
 This algorithm determines whether $D$ is $G$-congruent to  $D\sprime$ or not.
We fix  an element $[v_1, \dots, v_l]$  of $\Delta_{L\dual}(D)^l$
 that forms a basis of $L\tensor \Q$.
 For each $[v_1\spprime, \dots, v_l\spprime]\in \Delta_{L\dual}(D\sprime)^l$,
 we calculate the linear transformation $g$ of $L\tensor \Q$ that satisfies 
 $ v_i^g=v_i\spprime$  for $i=1, \dots, l$.
 If $g$ belongs to $G$ and 
induces a bijection from  $\Delta_{L\dual}(D)$ to $\Delta_{L\dual}(D\sprime)$,
 then $D$ and $D\sprime$ are $G$-congruent.
 If no such $[v_1\spprime, \dots, v_l\spprime]$ are found, then $D$ and $D\sprime$ are not $G$-congruent.
 \end{algorithm}
\begin{remark}\label{rem:intpt}
Suppose that  $p$ is a point  in the interior of $D$.
By~\eqref{eq:intpt}, we see that
an element  $g\in G$ is contained in $\autG(D)$  if and only if
$p^g\in D$, 
and $g\in G$ induces an isomorphism from $D$ to $D\sprime$ if and only if $p^g\in D\sprime$. 
\end{remark}
\section{Vinberg-Conway theory}\label{sec:Conway}
Let $n$ be $10$, $18$ or $26$.
Throughout this section,
we denote 
by  $\L$  an even unimodular hyperbolic lattice $\mathrm{II}_{1, n-1}$ of rank $n$.
Note that  $\L$ exists and is unique up to isomorphism
(see, for example, ~\cite[Chapter V]{MR0344216}).
We fix a positive cone $\PPP_{\L}$ of $\L$.
Vinberg~\cite{MR0422505} and Conway~\cite[Chapter 27]{MR1662447} described the structure of $\Roots_{\L}\sphyp$-chambers;
that is, the standard fundamental domains of the action of $W(\L)$ on $\PPP_{\L}$ 
(see Example~\ref{example:W}).
\begin{definition}\label{def:LLLweylvector}
Let $\DDD$ be an $\Roots_{\L}\sphyp$-chamber.
We say that a vector $w\in \L$  is a \emph{Weyl vector of  $\DDD$}
if the $\Roots_{\L}$-minimal defining set $\Delta_{\Roots_{\L}}(\DDD)$ of $\DDD$ is given by 
\begin{equation*}\label{eq:DeltaL}
\Delta_{\Roots_{\L}}(\DDD)=\set{r\in \Roots_{\L}}{\intM{w, r}{}=1}.
\end{equation*}
\end{definition}
If a Weyl vector of an $\Roots_{\L}\sphyp$-chamber $\DDD$ exists,
then it is unique, because, as will be  shown below,
$\Delta_{\Roots_{\L}}(\DDD)$ spans $\L\tensor\R$.
If $w$ is the Weyl vector of $\DDD$,
then $w^g$ is the Weyl vector of $\DDD^g$ for any $g\in \OG^+(\L)$.
Since  any two  $\Roots_{\L}\sphyp$-chambers   are $\OG^+(\L)$-congruent,
the Weyl vector  of a single $\Roots_{\L}\sphyp$-chamber
gives  Weyl vectors  of all $\Roots_{\L}\sphyp$-chambers via the action of $\OG^+(\L)$.
%
\begin{theorem}[Conway, Chapter 27 of~\cite{MR1662447}]\label{thm:Weyl}
For any   $\Roots_{\L}\sphyp$-chamber $\DDD$, 
there exists a Weyl vector  $w\in \L$ of $\DDD$.
We have 
$$
w^2=\begin{cases}
1240 & \textrm{if $n=10$} \\
620 & \textrm{if $n=18$} \\
0 & \textrm{if $n=26$} \\
\end{cases}
\;\;\quand\;\;
|\Delta_{\Roots_{\L}}(\DDD)|=\begin{cases}
10 & \textrm{if $n=10$} \\
19 & \textrm{if $n=18$} \\
\infty  & \textrm{if $n=26$}. \\
\end{cases}
$$
\end{theorem}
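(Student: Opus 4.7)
The plan is to invoke Vinberg's algorithm for constructing fundamental polyhedra of hyperbolic reflection groups, combined with the explicit decomposition $\L \cong U \oplus K$, where $U$ is the unimodular hyperbolic plane and $K$ is $E_8(-1)$, $E_8(-1)^{\oplus 2}$, or $\Lambda(-1)$ (with $\Lambda$ the Leech lattice) for $n = 10$, $18$, or $26$ respectively. Writing a vector of $\L$ as $(\lambda, m, k) \in K \oplus U$ with norm $\lambda^2 + 2mk$, the candidate Weyl vector is
\[
w := (\rho, 1, h),
\]
where for $n = 10, 18$ the vector $\rho \in K \tensor \Q$ is proportional to a Weyl vector of the finite root system in $K$ and $h \in \Q$ is chosen so that $w^2$ takes the asserted value, while for $n = 26$ one takes $\rho = 0$ and $h = 0$, producing an isotropic $w$.

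First, I would verify the values of $w^2$. For $n = 26$ this is immediate from the decomposition. For $n = 10, 18$ it reduces to computing the norm of the half-sum of positive $E_8$-roots via the strange formula, and adjusting $h$ to absorb the resulting $U$-contribution so that $w^2 = 1240$ or $620$.

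Second, I would enumerate $\Delta := \{r \in \Roots_\L : \intM{w, r}{} = 1\}$ by writing $r = (\mu, a, b) \in K \oplus U$ and solving the arithmetic conditions $r^2 = -2$ and $\intM{w, r}{} = 1$. For $n = 26$ Conway's identification parametrizes the solutions bijectively by $\mu \in \Lambda$, yielding infinitely many roots. For $n = 10, 18$ a direct finite enumeration produces exactly $10$ and $19$ solutions, which form the simple roots of the canonical hyperbolic Coxeter extension of $E_8$ (respectively $E_8 \oplus E_8$).

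The main obstacle is then to show that the cone $\DDD := \Sigma_\L(\Delta) \cap \PPP_\L$ is actually an $\Roots_\L\sphyp$-chamber with $\Delta$ as its $\Roots_\L$-minimal defining set; equivalently, that no $(-2)$-vector $r \in \Roots_\L$ with $\intM{w, r}{} \geq 2$ defines a wall of $\DDD$. For $n = 26$ this is the substantive content of Conway's theorem and ultimately rests on the covering radius of the Leech lattice being $\sqrt{2}$: the classification of deep holes in $\Lambda$ translates into the statement that every $(-2)$-root outside $\Delta$ lies strictly inside $\DDD$. For $n = 10, 18$ the same conclusion follows from Vinberg's finite-covolume criterion: one checks directly that the Coxeter diagram determined by $\Delta$ cuts out a polytope of finite volume in $\H_\L$, at which point the algorithm has terminated and no further simple roots can appear. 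Minimality of $\Delta$ is then immediate, since $\intM{w, r}{} = 1$ together with unimodularity of $\L$ forces each $r \in \Delta$ to be primitive, and uniqueness of $w$ follows from the fact that $\Delta$ spans $\L \tensor \R$.
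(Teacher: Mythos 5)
The paper does not actually prove this statement: it is quoted verbatim from Conway (Chapter~27 of Conway--Sloane) with the $n=10,18$ cases attributed to Vinberg (Remark~\ref{rem:deepholes} area, i.e.\ the remark following the theorem), and Sections~\ref{subsec:10}--4.3 merely record the explicit Weyl vectors $w_0$ and the sets $\Delta_{\Roots_{\L}}(\DDD_0)$ that come out of those references. Your proposal is therefore not an alternative route but a sketch of the standard literature proof being cited: Vinberg's algorithm with its finite-covolume termination criterion for $n=10,18$, and the covering-radius-$\sqrt{2}$ theorem for the Leech lattice for $n=26$. That is the right skeleton, and your norm computations (the strange formula giving $\rho^2=620$ for $E_8$, hence $1860-620=1240$ and $1860-2\cdot 620=620$ in the paper's coordinates) agree with the paper's explicit $w_0$.

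There are, however, three places where the logic as written is off. First, the order of quantifiers in your construction is backwards: $h$ is not a free parameter to be ``chosen so that $w^2$ takes the asserted value.'' The defining property of a Weyl vector is $\intM{w,r}{}=1$ for every $r\in\Delta_{\Roots_{\L}}(\DDD)$, which (since that set spans $\L\tensor\R$) determines $w$ uniquely; the values $1240$, $620$, $0$ are then computed, not imposed. If you fix $h$ by a norm condition you have no a priori guarantee that $w$ pairs to $1$ with the affine and hyperbolic nodes. Likewise ``$\rho$ proportional to a Weyl vector'' must be ``equal to,'' or the $E_8$ simple roots drop out of $\Delta$. Second, for $n=10,18$ the finite-volume criterion applied to a candidate set $\Delta$ does not by itself show that no further root hyperplane meets the interior of $\Sigma_{\L}(\Delta)\cap\PPP_{\L}$; that conclusion requires Vinberg's termination lemma, which in turn requires knowing that the elements of $\Delta$ are produced by the algorithm in increasing order of distance from a controlling vector. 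You should either run the algorithm or cite Vinberg's lemma explicitly rather than asserting ``no further simple roots can appear.'' (Your phrasing for $n=26$, that roots outside $\Delta$ ``lie strictly inside $\DDD$,'' is also not meaningful --- roots have negative norm; what the covering radius gives is that no hyperplane $(r)\sperp$ with $r\notin\pm\Delta$ meets the interior of $\DDD$.) Third, the theorem asserts existence of a Weyl vector for \emph{every} $\Roots_{\L}\sphyp$-chamber, while you construct one chamber; you need the transitivity of $W(\L)$ (equivalently $\OG^+(\L)$) on $\Roots_{\L}\sphyp$-chambers, which the paper invokes immediately after Definition~\ref{def:LLLweylvector}, to transport $w_0$ to all other chambers. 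All three gaps are fillable from the cited sources, but they are genuine steps, not notation.
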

\begin{remark}
The finite sets $\Delta_{\Roots_{\L}}(\DDD)$ for $n=10$ and $18$ had been  calculated by Vinberg~\cite{MR0422505}.
\end{remark}
We give an explicit description of the Weyl vectors and the set $\Delta_{\Roots_{\L}}(\DDD)$, 
and prove the following:
\begin{proposition}\label{prop:bdrQ}
If $\DDD$ is an $\Roots_{\L}\sphyp$-chamber,
then  $\pi_{\L}(\closure{\DDD})\cap \bdrclH_{\L}$ is contained in $\bdrclHQ_{\L}$.
\end{proposition}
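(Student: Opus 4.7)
The proof divides into cases by $n \in \{10, 18, 26\}$, since the structure of $\Delta := \Delta_{\Roots_{\L}}(\DDD)$ (finite vs.\ infinite) dictates the strategy. Throughout, any point of $\pi_{\L}(\clDDD) \cap \bdrclH_{\L}$ is represented by a non-zero isotropic $x \in \clPPP_{\L}$ with $x \in \clDDD$, and the aim is to show that $x$ is proportional to some vector of $\L$.

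For $n = 10$ or $n = 18$: Theorem~\ref{thm:Weyl} gives $|\Delta| < \infty$, so $\clDDD$ is a finite-sided rational polytope in the hyperbolic space $\H_{\L}$. By Vinberg's analysis of cusps of such polytopes in~\cite{MR0422505}, every ideal vertex of $\clDDD$ arises from a subset $\Delta_0 \subset \Delta$ whose Coxeter subdiagram is affine (parabolic); the Gram matrix of $\Delta_0$ is then a rational matrix with a $1$-dimensional kernel, and its generator $v = \sum a_i\, r_i \in \L\tensor\Q$ is a rational isotropic vector spanning the ideal vertex ray. Hence every isotropic $x \in \clDDD$ is a positive multiple of some such $v$, giving $\pi_{\L}(x) \in \bdrclHQ_{\L}$.

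For $n = 26$: Here $w^2 = 0$, so $\pi_{\L}(w) \in \bdrclHQ_{\L}$ already supplies one rational cusp. I would use the observation, immediate from the hyperbolic signature, that two non-zero isotropic vectors of $\clPPP_{\L}$ are proportional if and only if their inner product vanishes. For isotropic $x \in \clDDD$ with $\intM{w, x}{} = 0$ this observation directly yields $x \propto w$. In the remaining case $\intM{w, x}{} > 0$, I would invoke the explicit Conway description of $\DDD$ in the realization $\L = U \oplus \Lambda(-1)$ with $\Lambda$ the Leech lattice, to be recalled just before this proposition: $w = (0, 0, 1)$ and $\Delta = \{r_\mu = (\mu, 1, \mu^2/2 - 1) : \mu \in \Lambda\}$. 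Rescaling $x$ so that $\intM{w, x}{} = 1$ and writing $x = (\lambda, 1, n)$, the condition $x^2 = 0$ forces $n = \lambda^2/2$, and $\intM{r_\mu, x}{} = \tfrac{1}{2}(\mu - \lambda)^2 - 1 \geq 0$ for all $\mu \in \Lambda$ says that $\lambda$ achieves the covering radius $\sqrt{2}$ of $\Lambda$, i.e., $\lambda$ is a deep hole. Conway's classification of the $23$ deep-hole types as explicit rational points of $\Lambda\tensor\Q$ then yields $x \in \L\tensor\Q$.

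The principal obstacle is the $n = 26$ subcase $\intM{w, x}{} > 0$: the finite-polytope Vinberg argument collapses because $|\Delta|$ is infinite, and the rationality of cusps must instead be extracted from the substantially deeper Conway--Parker--Sloane classification of deep holes of the Leech lattice.
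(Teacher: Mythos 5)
Your proposal is correct and follows essentially the same route as the paper: the cases $n=10,18$ rest on Vinberg's explicit description of the finitely many cusps of $\DDD_0$, and the case $n=26$ reproduces Borcherds' observation (the paper's Lemma~\ref{lem:property_d_L_26}) that, after splitting on whether $\intM{w,x}{}$ vanishes, every ideal point of $\closure{\DDD}_0$ other than $\pi_{\L}(w_0)$ comes from a deep hole of the Leech lattice. The only minor divergence is that you invoke the Conway--Parker--Sloane classification of the $23$ deep-hole types to conclude rationality, whereas the paper argues directly that any deep hole, being the circumcenter of its nearest lattice points whose pairwise differences span $\Lambda\tensor\Q$, already lies in $\Lambda\tensor\Q$.
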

In the following,
we  denote by $U$ the even hyperbolic lattice of rank $2$ with 
a fixed basis $f_U, z_U$, with respect to which the Gram matrix is 
\begin{equation}\label{eq:nonstandardU}
\left[
\begin{array}{cc}
0 & 1 \\
1 & -2 
\end{array}
\right],
\end{equation}
and  by  $f_U\dual, z_U\dual$ the basis of $U$ dual to $f_U, z_U$.
\begin{remark}
We choose this non-standard basis of $U$ for geometric reasons;
$f_U$ will be the class of a fiber of a Jacobian fibration on a $K3$ surface
and $z_U$ will be the class of the zero section.
See Sections~\ref{sec:rho3} and~\ref{sec:singK3}.
\end{remark}
Let $E_8$ denote  the (negative-definite) root lattice of type $E_8$
with 
the standard  basis $e_1, \dots, e_8$,  whose Coxeter graph is 
\begin{equation*}\label{eq:E8}
\def\ha{40}
\def\hav{37}
\def\hd{25}
\def\hdv{22}
\def\he{10}
\def\hev{7}
\setlength{\unitlength}{1.5mm}
\centerline{
{\small
\begin{picture}(100, 9)(-25, 7)
\put(22, 16){\circle{1}}
\put(23.5, 15.5){$e\sb 1$}
\put(22, 10.5){\line(0,1){5}}
\put(9.5, \hev){$e\sb 2$}
\put(15.5, \hev){$e\sb 3$}
\put(21.5, \hev){$e\sb 4$}
\put(27.5, \hev){$e\sb 5$}
\put(33.5, \hev){$e\sb 6$}
\put(39.5, \hev){$e\sb 7$}
\put(45, \hev){$e\sb {8}$}
\put(10, \he){\circle{1}}
\put(16, \he){\circle{1}}
\put(22, \he){\circle{1}}
\put(28, \he){\circle{1}}
\put(34, \he){\circle{1}}
\put(40, \he){\circle{1}}
\put(46, \he){\circle{1}}
\put(10.5, \he){\line(5, 0){5}}
\put(16.5, \he){\line(5, 0){5}}
\put(22.5, \he){\line(5, 0){5}}
\put(28.5, \he){\line(5, 0){5}}
\put(34.5, \he){\line(5, 0){5}}
\put(40.5, \he){\line(5, 0){5}}
\put(50, \hev){.}
\end{picture}
}
}
\end{equation*}
We denote by  $e_1\dual, \dots, e_8\dual$
the basis of $E_8$ dual to $e_1, \dots, e_8$.
We put
$$
\theta:=3e_1+2e_2+4e_3+6e_4+5e_5+4e_6+3e_7+2e_8.
$$
\subsection{The case where $n=10$.}\label{subsec:10}
We put
$\L:=U\oplus E_8$,
and choose $\PPP_{\L}$ in such a way that
$2f_U+z_U\in \PPP_{\L}$.
Then the vector
$$
w_0:=30 f_U\dual +z_U\dual +e_1\dual+ \cdots+ e_8\dual
$$
 is the Weyl vector of an $\Roots_{\L}\sphyp$-chamber $\DDD_0$.
 By Vinberg~\cite{MR0422505}, 
we have 
$$
\Delta_{\Roots_{\L}}(\DDD_0)=\{z_U, e_1, \dots, e_8, f_U-\theta\},
$$
and
$$
\pi_{\L}(\clDDD_0)\cap \bdrclH_{\L}=\{\pi_{\L}(f_U)\}\subset \bdrclHQ_{\L}.
$$
\subsection{The case where  $n=18$.}\label{subsec:18}
We put 
$\L:=
U\oplus E_8\oplus E_8$,
and choose $\PPP_{\L}$ in such a way that
$2f_U+z_U\in \PPP_{\L}$.
Let $e_1\sprime, \dots, e_8\sprime$ be the basis of the second $ E_8$
with the same Coxeter graph as  $e_1, \dots, e_8$.
Then the vector
$$
w_0:=
30 f\dual_U +z\dual_U +e_1\dual+ \cdots+ e_8\dual+e_1^{\prime\vee}+\cdots+ e_8^{\prime\vee}
$$
is the Weyl vector of an $\Roots_{\L}\sphyp$-chamber $\DDD_0$.
 By Vinberg~\cite{MR0422505}, 
 we have 
$$
\Delta_{\Roots_{\L}}(\DDD_0)=
\{z_U, e_1, \dots, e_8, e_1\sprime, \dots, e_8\sprime, f_U-\theta, f_U-\theta\sprime\},
$$
where  $\theta\sprime$ is 
defined in the same way as $\theta$ with $e_i$ replaced by $e_i\sprime$.
Moreover, we have
$$
\pi_{\L}(\clDDD_0)\cap \bdrclH_{\L}=\{\pi_{\L}(f_U), \pi_{\L}(v_1) \}\subset \bdrclHQ_{\L},
$$
where
$$
v_1:=e_1+e_3+e_1\sprime+e_3\sprime+
2 (z_U+(f_U-\theta)+(f_U-\theta\sprime)+e_4+\cdots+e_8+e_4\sprime+\cdots+e_8\sprime).
$$
\subsection{The case where  $n=26$}
We denote by $\Lambda$ the \emph{negative-definite} Leech lattice;
that is,  $\Lambda$ is an even unimodular negative-definite lattice of rank $24$ such that
$\Roots_{\Lambda}=\emptyset$.
We put $\L:=U\oplus \Lambda$, 
and  choose $\PPP_{\L}$ in such a way that  $2f_U+z_U\in \PPP_{\L}$.
\begin{theorem}[Conway and Sloane, Chapter 26 of~\cite{MR1662447}]\label{thm:ConwaySloane}
A vector $w\in \L$ is a Weyl vector 
of some $\Roots_{\L}\sphyp$-chamber
if and only if
$w$ is a non-zero primitive vector of norm $0$
contained in $\clPPP_{\L}$ such that 
$\gen{w}\sperp/\gen{w}$ is isomorphic to $\Lambda$,
where $\gen{w}\sperp$ is the orthogonal complement of the primitive submodule $\gen{w}$ of $\L$.
\end{theorem}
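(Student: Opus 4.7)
The plan is to reduce both directions to the concrete model $\L = U \oplus \Lambda$, in which Conway's theorem identifies the \emph{Leech roots}
$$
r_\lambda \;:=\; -\tfrac{\lambda^2}{2}\, f_U + z_U + \lambda, \qquad \lambda \in \Lambda,
$$
as the walls of a distinguished $\Roots_{\L}\sphyp$-chamber $\DDD_0$ having $f_U$ as its Weyl vector. Recall that $\lambda^2 \le 0$ since $\Lambda$ is negative-definite, so $-\lambda^2/2 \in \Z_{\ge 0}$, and a direct computation using the Gram matrix~\eqref{eq:nonstandardU} of $U$ gives $r_\lambda^2 = -2$ and $\intM{f_U, r_\lambda}{} = 1$.

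For the \emph{only if} direction, let $w$ be a Weyl vector of an $\Roots_{\L}\sphyp$-chamber $\DDD$. Theorem~\ref{thm:Weyl} supplies $w^2 = 0$, and the defining relation $\intM{w, r}{} = 1$ for every $r \in \Delta_{\Roots_{\L}}(\DDD)$ forces $w$ to be primitive: if $w = k w'$ with $k \ge 2$, every such inner product would be divisible by $k$. The same relation places $w$ in $\Sigma_{\L}(\Delta_{\Roots_{\L}}(\DDD))$, which by Lemma~\ref{lem:clSigma} is contained in $\clPPP_{\L}$. It remains to show $\gen{w}\sperp / \gen{w} \cong \Lambda$. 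Primitivity and isotropy of $w$ in the even unimodular lattice $\L$ make this quotient an even unimodular negative-definite lattice of rank $24$, hence one of the $24$ Niemeier lattices. If it admitted a $(-2)$-vector $\bar r$, a lift $r \in \gen{w}\sperp$ would give $r \in \Roots_{\L}$ with $\intM{w, r}{} = 0$. But $W(\L)$ is a Coxeter group with simple system $\Delta_{\Roots_{\L}}(\DDD)$, so every real root decomposes as a $\Z_{\ge 0}$- or $\Z_{\le 0}$-combination $r = \sum c_i r_i$ of simple roots, and the height $\sum c_i$ equals $\intM{w, r}{}$; hence $\intM{w, r}{} = 0$ forces $r = 0$, a contradiction. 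Therefore $\gen{w}\sperp/\gen{w}$ is rootless, so isomorphic to $\Lambda$.

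For the \emph{if} direction, assume $w$ satisfies the stated conditions. Primitivity of $w$ together with unimodularity of $\L$ furnishes $z' \in \L$ with $\intM{w, z'}{} = 1$; setting
$$
z := z' - \bigl( \tfrac{(z')^2}{2} + 1 \bigr)\, w,
$$
one verifies $\intM{w, z}{} = 1$ and $z^2 = -2$, so $U' := \gen{w, z}$ is isomorphic to $U$ under the correspondence $w \leftrightarrow f_U$, $z \leftrightarrow z_U$. Unimodularity of $U'$ splits $\L = U' \oplus (U')\sperp$, and $(U')\sperp$ is a complement of $\gen{w}$ inside $\gen{w}\sperp$, hence isomorphic to $\gen{w}\sperp / \gen{w} \cong \Lambda$. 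This yields an isomorphism $\L \isom U \oplus \Lambda$ carrying $w$ to $f_U$. Since the positive cone of $U \oplus \Lambda$ containing $2f_U + z_U$ is the unique one having $f_U$ in its closure (verified via a continuous path from $2 f_U + z_U$ to $f_U$ inside $\clPPP_{\L}$), the hypothesis $w \in \clPPP_{\L}$ forces this isomorphism to preserve positive cones. Pulling back the chamber $\DDD_0$ then produces an $\Roots_{\L}\sphyp$-chamber whose Weyl vector is $w$.

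The main obstacle is Conway's input used in the \emph{if} direction: the assertion that the Leech roots $\{r_\lambda : \lambda \in \Lambda\}$ form the complete set of walls of a single $\Roots_{\L}\sphyp$-chamber, so that $f_U$ is genuinely a Weyl vector. This is the substance of~\cite[Chapter 26]{MR1662447} and rests on the ``deep-hole'' theorem that $\Lambda$ has covering radius $\sqrt{2}$, equivalently that every $(-2)$-vector in $\L$ is $W(\L)$-equivalent, up to sign, to some Leech root. Once that ingredient is granted, the remaining arguments combine Niemeier's classification with standard Coxeter-group facts on the positive-versus-negative decomposition of real roots.
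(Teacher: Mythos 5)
The paper itself gives no proof of this statement: it is quoted as Conway--Sloane's result and simply used (to assert that $f_U$ is a Weyl vector, and in Remark~\ref{rem:UE83} and Section~\ref{subsec:req3} to test candidate Weyl vectors). So your attempt can only be judged on its own terms. Your \emph{if} direction is correct: the construction of $z$ with $z^2=-2$ and $\intM{w,z}{}=1$, the splitting $\L=U'\oplus (U')\sperp$ with $(U')\sperp\cong\gen{w}\sperp/\gen{w}\cong\Lambda$, and the cone bookkeeping are all fine, and you correctly isolate Conway's theorem (the Leech roots $r_\lambda$ are exactly the walls of one chamber, whose Weyl vector is $f_U$) as the external input that the citation supplies.

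The gap is in the \emph{only if} direction, at the step excluding $(-2)$-vectors from $\gen{w}\sperp/\gen{w}$. You decompose a root $r$ with $\intM{w,r}{}=0$ as $\sum c_i r_i$ over the simple system with all $c_i$ of one sign and read off the height. But the ``standard Coxeter-group fact'' you invoke is proved using linear independence of the simple roots (they are part of a basis in the geometric representation of the Coxeter system). Here $\Delta_{\Roots_{\L}}(\DDD)$ is countably infinite inside a $26$-dimensional space, so it is massively linearly dependent, the decomposition is not unique, and the positive/negative dichotomy is not available off the shelf; establishing it for this particular root system is essentially Borcherds' analysis of the fake monster root system, i.e.\ not cheaper than what you are trying to prove. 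The step is true and easily repaired, in either of two ways. The efficient repair makes most of your only-if argument unnecessary: all $\Roots_{\L}\sphyp$-chambers are $\OG^+(\L)$-congruent and the Weyl vector of a chamber is unique (both recorded in the paper), so any Weyl vector equals $f_U^{\,g}$ for some $g\in\OG^+(\L)$, and primitivity, norm $0$, membership in $\clPPP_{\L}$, and $\gen{w}\sperp/\gen{w}\cong\Lambda$ all transfer verbatim from $f_U$. (This also sidesteps your appeal to Lemma~\ref{lem:clSigma}, whose blanket hypotheses include $\propVthree$, which fails for $\Roots_{\L}$ when $n=26$, even though its proof only uses $\propVfour$.) If you prefer to keep your structure, argue instead: if $r\in\Roots_{\L}$ satisfies $\intM{w,r}{}=0$, then $s_r$ fixes $w$, hence permutes $\shortset{r'\in\Roots_{\L}}{\intM{w,r'}{}=1}=\Delta_{\Roots_{\L}}(\DDD)$ and therefore fixes $\DDD=\Sigma_{\L}(\Delta_{\Roots_{\L}}(\DDD))\cap\PPP_{\L}$; since $W(\L)$ acts simply transitively on the $\Roots_{\L}\sphyp$-chambers (Example~\ref{example:W}), this forces $s_r=1$, a contradiction.
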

Therefore
$w_0:=f_U$
is the Weyl vector of an $\Roots_{\L}\sphyp$-chamber 
$\DDD_0$.
For simplicity, we denote vectors of $\L\tensor\R=(U\oplus\Lambda)\tensor\R$ by 
$$
[s, t, y]:=s f_U +t z_U +y, \quad\textrm{where}\quad s, t\in \R,\;\; y\in \Lambda\tensor\R.
$$
Then we have
$$
\Delta_{\Roots_{\L}}(\DDD_0)=\set{r_{\lambda}}{\lambda\in \Lambda},
\quad\textrm{where}\quad
r_{\lambda}:=\left[-\lambda^2/2, 1, \lambda\right].
$$
Conway, Parker and Sloane~\cite{MR660415} (\cite[Chapter 23]{MR1662447})
proved that the covering radius of the Leech lattice is $\sqrt{2}$
(see also Borcherds~\cite{MR791880}).
A point $c$ of  $\Lambda\tensor\R$ is called a \emph{deep hole}
if $-(c-\lambda)^2\ge 2$ holds for  any  $\lambda\in \Lambda$.
In Lemma 4.4 of~\cite{MR913200},
Borcherds observed the following:
\begin{lemma}\label{lem:property_d_L_26}
Let $b$ be a point of $\pi_{\L} (\closure{\DDD}_0)\cap\bdrclH_{\L}$.
Then either $b=\pi_{\L}(w_0)$ or there exists a deep hole
$c$ such that
$$
b=\pi_{\L}(v_c), \quad\textrm{where}\quad
v_c:=\left[-{c^2}/{2}+1, 1, c\right].
$$
In particular,
the set $\pi_{\L} (\closure{\DDD}_0)\cap\bdrclH_{\L}$
is contained in $\bdrclHQ_{\L}$.
\end{lemma}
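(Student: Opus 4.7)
The plan is to pull back a boundary point $b \in \pi_{\L}(\closure{\DDD}_0) \cap \bdrclH_{\L}$ to an isotropic vector $v \in \closure{\DDD}_0 \setminus \{0\}$ with $\pi_{\L}(v) = b$, write $v = [s, t, y]$ in the decomposition $\L = U \oplus \Lambda$, and then decode the defining inequalities $\langle v, r_\lambda\rangle \ge 0$ for all $\lambda \in \Lambda$. Using $r_\lambda = [-\lambda^2/2, 1, \lambda]$ together with the Gram matrix~\eqref{eq:nonstandardU} of $U$, a short computation gives
\[
v^2 = 2t(s - t) + y^2 \quand \langle v, r_\lambda\rangle = s - 2t - t\lambda^2/2 + \langle y, \lambda\rangle.
\]
Isotropy $v^2 = 0$ thus reads $2t(s - t) = -y^2 \ge 0$, and since $f_U$ lies in $\clPPP_{\L}$ (visible from the path $r \mapsto (2-r)f_U + (1-r)z_U$, whose squared norm is $2(1-r)\ge 0$ along the way to $2f_U + z_U$), any $v \in \clPPP_{\L}$ satisfies $t = \langle v, f_U\rangle \ge 0$.

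I would then split according to the value of $t$. If $t = 0$, then $y^2 = 0$ and the negative definiteness of $\Lambda$ force $y = 0$, so $v$ is a positive multiple of $f_U = w_0$ and $b = \pi_{\L}(w_0)$. If $t > 0$, rescale $v$ so that $t = 1$; isotropy then yields $s = 1 - y^2/2$. Substituting into $\langle v, r_\lambda\rangle \ge 0$ and using
\[
-(y - \lambda)^2/2 = -y^2/2 + \langle y, \lambda\rangle - \lambda^2/2
\]
reduces the whole system of inequalities to $-(y - \lambda)^2 \ge 2$ for every $\lambda \in \Lambda$, which is precisely the defining condition for $c := y$ to be a deep hole of $\Lambda$. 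In that case $v = [-c^2/2 + 1, 1, c] = v_c$, giving the claimed dichotomy.

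For the final assertion $\pi_{\L}(\closure{\DDD}_0) \cap \bdrclH_{\L} \subset \bdrclHQ_{\L}$, I would invoke the explicit classification of the deep holes of the Leech lattice by Conway, Parker, and Sloane~\cite{MR660415}, \cite[Chapter 23]{MR1662447}: every deep hole $c$ has rational coordinates in $\Lambda$ and rational norm, so $v_c \in \L \otimes \Q$, while $w_0 = f_U \in \L$ is already integral. The main obstacle is not the bilinear-form bookkeeping, which is essentially routine, but rather this appeal to the explicit structure of deep holes: the geometric content of the last clause is imported entirely from the Conway-Parker-Sloane classification, and without it one would only know that $v_c$ is a real isotropic vector.
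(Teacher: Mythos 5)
Your treatment of the dichotomy is essentially identical to the paper's: pull $b$ back to a nonzero isotropic vector $u=[s,t,y]\in\closure{\DDD}_0$, note $t=\intM{u,w_0}{}\ge 0$ because $u$ and $w_0=f_U$ both lie in $\clPPP_{\L}$, dispose of $t=0$ by negative definiteness of $\Lambda$, and for $t=1$ use $\intM{u,r_\lambda}{}=-(y-\lambda)^2/2-1$ to identify $y$ with a deep hole $c$ and $u$ with $v_c$; all of this checks out. Where you genuinely diverge is the final rationality claim. You import it wholesale from the explicit Conway--Parker--Sloane enumeration of the $23$ deep-hole types, which is legitimate but heavier than what the paper uses. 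The paper argues intrinsically: since the covering radius of $\Lambda$ is $\sqrt2$, the lattice points $p_1,\dots,p_N$ nearest to a deep hole $c$ satisfy $(p_i-c)^2=-2$, their differences $p_i-p_j$ span $\Lambda\tensor\Q$ (were they confined to a proper affine subspace, perturbing $c$ perpendicularly would produce a point farther than $\sqrt2$ from $\Lambda$), and $c$ is then the unique solution of the rational linear system $\intM{c,p_i-p_j}{}=(p_i^2-p_j^2)/2$ coming from the bisectors, hence $c\in\Lambda\tensor\Q$. So the paper's route needs only the covering-radius theorem, not the classification of deep holes; yours trades that small argument for a citation. Both close the lemma.
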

\begin{proof}
Note that $\clPPP_{\L}$ is contained in $\shortset{x\in \L\tensor\R}{\intM{w_0, x}{}\ge 0}$,
and that $x\in \clPPP_{\L}$
satisfies $\intM{x, w_0}{}=0$ if and only if $x$ belongs to the half-line $\R_{\ge 0} w_0$.
Suppose that $b=\pi_{\L}(u)$,
where $u=[s, t, y]$ is a non-zero vector of norm $0$
in   $\closure{\DDD}_0$.
Since $u\in \clPPP_{\L}$, we have $t=\intM{w_0, u}{}\ge 0$,
and $t=0$ holds if and only if $\R_{\ge 0} w_0= \R_{\ge 0} u$.
Hence $t=0$ implies $b=\pi_{\L}(w_0)$.
Suppose that $t>0$.
We can assume that $t=1$.
Since $u^2=0$, we have $2s-2+y^2=0$.
Since $u\in \closure{\DDD}_0$,
we have
$$
\intM{u, r_{\lambda}}{}=-\frac{(y-\lambda)^2}{2}-1\ge 0\;\;\textrm{for any}\;\; \lambda\in \Lambda.
$$
Therefore $y\in \Lambda\tensor\R$ is a deep hole $c$ and $u=v_c$ holds.
Let $p_1, \dots, p_N$ be the points of $\Lambda$ nearest to $c$;
that is, $p_1, \dots, p_N$  are the points of $\Lambda$ satisfying  $(p_i-c)^2=-2$.
Then their differences $p_i-p_j$ span $\Lambda\tensor\Q$,
and $c$ is the intersection point of the bisectors of
distinct  two points of $p_1, \dots, p_N$.
Hence $c$ belongs to  $\Lambda\tensor\Q$, and we have   $\pi_{\L}(u)\in \bdrclHQ_{\L}$.
%
\end{proof}
\begin{remark}\label{rem:deepholes}
The coordinates of  deep holes of $\Lambda$ are explicitly given  in 
 Conway, Parker and Sloane~\cite{MR660415}.
\end{remark}
\begin{remark}\label{rem:UE83}
We have an isomorphism 
$$
\L\cong  U\oplus E_8 \oplus E_8 \oplus E_8.
$$
The vector $w_E\in U\oplus E_8^3$ given by
$[30, 1, 1^8, 1^8, 1^8]$ 
in terms of the dual basis $f\dual, z\dual$, $e_1\dual, \dots, e_8^{\prime\prime\vee}$
is a Weyl vector of $\L$.
Indeed, let $u\in U\oplus E_8^3$ be the vector given by
$[1, -2, 0^8, 0^8, 0^8]$ 
in terms of the dual basis above.
Since $w_E^2=0$ and $\intM{w_0, u}{}=1$,
the vectors $w_E$ and $u$ span an even  hyperbolic unimodular lattice of rank $2$,
and its orthogonal complement  in $U\oplus E_8^3$
is isomorphic to $\gen{w_E}\sperp/\gen{w_E}$.
Calculating a Gram matrix of this orthogonal complement 
and using Algorithm~\ref{algo:QLc}, we  confirm that $\gen{w_E}\sperp/\gen{w_E}$ has no $(-2)$-vectors,
and therefore is isomorphic to $\Lambda$.
\end{remark}
\section{Generalized Borcherds' method}\label{sec:Borcherds}
Suppose that $S$ is an even hyperbolic lattice
with a fixed positive cone $\PPP_S$,
and let $G$ be  a subgroup of $\OG\sp+(S)$ with finite index
satisfying the existence of membership algorithm~$\propVG$ in Section~\ref{sec:chamber} with $L$ replaced by $S$.
We present an algorithm that calculates a set of generators of
$\aut_G(\Nc)=\shortset{g\in G}{\Nc^g=\Nc}$ for a given $\RRR_S\sphyp$-chamber $\Nc$
under the assumptions~$\assumpone$,~$\assumptwo$ and~$\assumpthree$ below.
\par
We recall the definition of the discriminant form of an even lattice.
See Nikulin~\cite{MR525944} for the details.
For an even lattice $L$,
the \emph{discriminant group} $\discgr{L}:=L\dual/L$ is  equipped with a non-degenerate quadratic form
$$
q_L \colon \discgr{L}\to \Q/2\Z, \qquad x \bmod L\mapsto x^2 \bmod 2\Z,
$$
 which is called the \emph{discriminant form of $L$}.
 Let $\OG(q_L)$ denote the group of automorphisms of $(\discgr{L}, q_L)$.
We have a natural homomorphism
$$
\eta_L \colon \OG(L)\to \OG(q_L).
$$
For an isomorphism $\delta\colon (A_1, q_1)\isom (A_2, q_2)$
of discriminant forms,
we denote by
$$
\delta_*\colon \OG(q_1)\isom \OG(q_2)
$$
the induced isomorphism on the automorphism groups.
\par
Let $n$ be $10$, $18$ or $26$.
As in Section~\ref{sec:Conway}, 
we denote by  $\L$  an even unimodular hyperbolic lattice of rank $n$.
We assume that $S$ satisfies the following embeddability  condition:
\begin{itemize}
\item[$\assumpone$]\quad 
\textrm{$S$ is primitively embedded into $\L$.}
\end{itemize}
Let $R$ denote the orthogonal complement of $S$ in ${\L}$.
Then $R$ is an even negative-definite lattice.
We assume that
\begin{itemize}
\item[$\assumptwo$]
\quad if $n=26$, the lattice $R$ cannot be embedded into the Leech lattice $\Lambda$.
\end{itemize}
For example, if $\Roots_R$ is non-empty, then $\assumptwo$ is satisfied,
because $\Roots_{\Lambda}=\emptyset$.
In fact,
in all examples that are treated in this paper, 
the assumption $\assumptwo$ is verified  by showing $\Roots_R\ne \emptyset$.
\par
We denote by
$$
x\mapsto x_S \quand x\mapsto x_R
$$
the orthogonal projections from ${\L}\tensor\R$ to $S\tensor\R$ and $R\tensor\R$,
respectively.
Since ${\L}$ is contained in $S\dual\oplus R\dual$,
the images of ${\L}$ by these projections are contained in $S\dual$ and $R\dual$,
respectively.
Since ${\L}$ is unimodular, the result of Nikulin~\cite[Proposition 1.6.1]{MR525944} implies that
the subgroup ${\L}/(S\oplus R)$ of $A_S\oplus A_R$ is the graph of an isomorphism
$$
\delta_{\L} \colon (A_S, q_S)\isom (A_R, -q_R).
$$
We assume that the subgroup $G$ of $\OG^+(S)$ satisfies 
 the following liftability condition~(see Proposition~\ref{prop:liftg} below):
\begin{itemize}
\item[$\assumpthree$]\quad
$\delta_{\L*}(\eta_S(G))\subset \Image \eta_R$, \;\; 
\textrm{where $\delta_{\L*}\colon \OG(q_S)\isom \OG(q_R)$ is induced by $\delta_{\L}$}.
\end{itemize}
For example, if $G$ is contained in $\eta_S\inv(\{\pm 1\})$, then~$\assumpthree$ is satisfied.
\par
%
Let $\PPP_{\L}$ be 
 the positive cone of $\L$ that contains the fixed positive cone $\PPP_S$ of $S$.
Let $r$ be an element of $\Roots_{\L}$.
Then the hyperplane $(r)\sperp\in \HHH_{\L}$ of $\PPP_{\L}$ intersects $\PPP_S$ if and only if
$r_S^2<0$ holds,
and in this case,
the hyperplane $(r_S)\sperp\in  \HHH_S$ of $\PPP_S$ is equal to the intersection $\PPP_S\cap (r)\sperp$.
We put
$$
\RtsLS:=\set{r_S}{r\in \Roots_{\L},\;\; r_S^2<0}, 
$$
and show that the subset $\RtsLS$ of $\NNN_S\cap S\dual$ has the properties~$\propVone$-$\propVfour$
given in Section~\ref{sec:chamber} with $L$ replaced by $S$.
\begin{proposition}
If $v\in  \RtsLS$,  then $-v^2\le 2$.
In particular, $\RtsLS$ satisfies $\propVone$.
\end{proposition}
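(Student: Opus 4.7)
The plan is to exploit the orthogonal decomposition $\L \otimes \R = (S \otimes \R) \oplus (R \otimes \R)$ together with the fact that $R$ is negative-definite, which is the content of the setup preceding the proposition.

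First, I would unpack the definition of $\RtsLS$: any $v \in \RtsLS$ is of the form $v = r_S$ for some $r \in \Roots_\L$ with $r_S^2 < 0$. Writing $r = r_S + r_R$ with $r_S \in S^\vee$ and $r_R \in R^\vee$, orthogonality of the projections gives
$$
-2 = r^2 = r_S^2 + r_R^2.
$$
Since $R$ is an even negative-definite lattice, $r_R \in R \otimes \R$ satisfies $r_R^2 \le 0$. Therefore
$$
r_S^2 = -2 - r_R^2 \ge -2,
$$
which is exactly the bound $-v^2 \le 2$.

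For the second sentence, I would just remark that property $\propVone$ requires the existence of some positive constant $c$ with $-v^2 < c$ for all $v \in \VVV$; taking $c := 3$ (or any $c > 2$) does the job thanks to the uniform bound just established. The only mild subtlety worth mentioning is the strict inequality in $\propVone$ versus the non-strict bound obtained above, but this is immediately handled by choosing $c$ slightly larger than $2$. I do not expect any real obstacle here — the proof is a one-line consequence of negative-definiteness of $R$ and the identity $r^2 = r_S^2 + r_R^2$.
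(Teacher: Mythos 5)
Your proof is correct and is essentially identical to the paper's: both use the orthogonal decomposition $r=r_S+r_R$, the identity $r_S^2+r_R^2=-2$, and the negative-definiteness of $R$ to conclude $r_S^2\ge -2$. Your extra remark about choosing $c>2$ to reconcile the non-strict bound with the strict inequality in $\propVone$ is a fine (and correct) point that the paper leaves implicit.
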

\begin{proof}
Since  $R$ is negative-definite
and $r_S^2+r_R^2=-2$ holds for any $r\in \Roots_{\L}$, 
we have $-2\le v^2$ for any $v=r_S\in \RtsLS$.
\end{proof}
By Lemma~\ref{lem:locallyfinite},  the family of hyperplanes
\begin{eqnarray*}
\RtsLS\sphyp &=&\set{(r_S)\sperp\in \HHH_S}{r\in \Roots_{\L},\;\; r_S^2<0} \\
&=&\set{\PPP_S\cap (r)\sperp}{r\in \Roots_{\L},\;\; \PPP_S\cap (r)\sperp\ne \emptyset}
\end{eqnarray*}
is locally finite in $\PPP_S$.
\begin{proposition}\label{prop:liftg}
Let $g$ be an element of $G$.
Then there exists an element  $\tilde {g}\in \OG\sp+({\L})$
that leaves $S$ invariant and is equal to $g$ on $S$.
\end{proposition}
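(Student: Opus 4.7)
The plan is to invoke Nikulin's gluing theorem for isometries of primitive sublattices of a unimodular lattice. Recall that since $\L$ is even unimodular and $S \subset \L$ is primitive with orthogonal complement $R$, the subgroup $\L/(S \oplus R) \subset \discgr{S} \oplus \discgr{R}$ is the graph of the anti-isometry $\delta_\L : (\discgr{S}, q_S) \isom (\discgr{R}, -q_R)$. Nikulin's criterion (Proposition~1.6.1 of~\cite{MR525944}) states that a pair $(g_S, g_R) \in \OG(S) \times \OG(R)$ extends to an isometry of $\L$ if and only if the induced automorphisms of the discriminant groups are compatible with $\delta_\L$, i.e.\
\[
\delta_{\L*}(\eta_S(g_S)) = \eta_R(g_R).
\]

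First, I would start with $g \in G$ and set $\bar{g}_S := \eta_S(g) \in \OG(q_S)$. By the liftability assumption $\assumpthree$, the element $\delta_{\L*}(\bar{g}_S) \in \OG(q_R)$ lies in the image of $\eta_R$, so we can choose $g_R \in \OG(R)$ with $\eta_R(g_R) = \delta_{\L*}(\bar{g}_S)$. By the compatibility just stated, the pair $(g, g_R)$ glues to an isometry $\tilde{g} \in \OG(\L)$ that preserves the splitting $S \oplus R \subset \L$ and restricts to $g$ on $S$.

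It remains to arrange $\tilde{g} \in \OG^+(\L)$, i.e.\ to ensure $\tilde{g}$ preserves the chosen positive cone $\PPP_\L$. Since $G \subset \OG^+(S)$, the restriction $\tilde{g}|_S = g$ maps $\PPP_S$ to itself. Because $\PPP_\L$ is by definition the positive cone of $\L$ containing $\PPP_S$, and a positive cone is characterized by any one of its interior vectors, $\tilde{g}$ must send $\PPP_\L$ to itself (the image contains $\PPP_S$, which already meets $\PPP_\L$, so the other connected component of $\shortset{x \in \L\tensor\R}{x^2 > 0}$ is ruled out). Hence $\tilde{g} \in \OG^+(\L)$, which completes the proof.

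The only substantive step is the invocation of Nikulin's gluing criterion; the rest is just tracking signs and positive cones. No serious obstacle arises, since $\assumpthree$ was tailored precisely to supply the missing discriminant-form lift $g_R$.
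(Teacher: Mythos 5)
Your proof is correct and follows essentially the same route as the paper: use $\assumpthree$ to produce $h\in\OG(R)$ with $\eta_R(h)=\delta_{\L*}(\eta_S(g))$, glue $(g,h)$ to an isometry of $\L$ via the graph description of $\L/(S\oplus R)$ from Nikulin's Proposition~1.6.1, and observe that preservation of $\PPP_S\subset\PPP_\L$ forces the lift to lie in $\OG^+(\L)$. No issues.
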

\begin{proof}
By~$\assumpthree$,
there exists an element $h\in \OG(R)$ such that $\delta_{\L*}(\eta_S(g))=\eta_R(h)$.
Then the action of $(\eta_S(g), \eta_R(h))$ on $A_S\oplus A_R$ preserves 
the graph ${\L}/(S\oplus R)$  of $\delta_{\L}$,
and hence the action of $(g, h)$ on $S\dual\oplus R\dual$ preserves 
 ${\L}\subset S\dual\oplus R\dual$.
The restriction $\tilde {g}\in \OG({\L})$  of $(g, h)$ to ${\L}$
belongs to  $\OG\sp+({\L})$,
because $\tilde{g}$ leaves  $\PPP_S\subset \PPP_{\L}$ invariant.
Thus we obtain  a desired lift $\tilde {g}\in \OG\sp+({\L})$. 
\end{proof}
\begin{proposition}
The action of $G$ on $\NNN_S\cap S\dual$ leaves 
$\RtsLS$  invariant;  that is, 
$\RtsLS$ satisfies $\propVtwo$.
\end{proposition}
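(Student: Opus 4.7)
The plan is to leverage Proposition~\ref{prop:liftg} (the preceding proposition), which provides, for any $g\in G$, a lift $\tilde{g}\in\OG^+(\L)$ that stabilizes $S$ and restricts to $g$ on $S$. The key observation is that such a lift automatically stabilizes the orthogonal complement $R$ as well, and hence preserves the orthogonal decomposition $\L\tensor\R=(S\tensor\R)\oplus(R\tensor\R)$. Consequently, $\tilde{g}$ commutes with the orthogonal projection $x\mapsto x_S$ from $\L\tensor\R$ to $S\tensor\R$.

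Given $v\in\RtsLS$, write $v=r_S$ for some $r\in\Roots_{\L}$ with $r_S^2<0$, and fix an element $g\in G$ with a lift $\tilde{g}\in\OG^+(\L)$ as above. Using that $\tilde{g}$ commutes with the projection to $S$, compute
\[
v^g=(r_S)^g=(r_S)^{\tilde{g}}=(r^{\tilde{g}})_S.
\]
Since $\tilde{g}\in\OG(\L)$ is an isometry, $r^{\tilde{g}}$ is again a $(-2)$-vector of $\L$, so $r^{\tilde{g}}\in\Roots_{\L}$. Moreover, $g\in\OG^+(S)$ preserves the form on $S$, so $(r^{\tilde{g}})_S^2=(v^g)^2=v^2<0$. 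Therefore $v^g=(r^{\tilde{g}})_S$ belongs to $\RtsLS$, which establishes the invariance.

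There is essentially no obstacle here: the whole point is that Proposition~\ref{prop:liftg} has already absorbed the nontrivial work involved in invoking assumption~$\assumpthree$. The present statement is then a purely formal verification that a block-diagonal isometry of $S\oplus R$ (modulo the gluing) conjugates projections of roots to projections of roots.
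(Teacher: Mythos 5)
Your proof is correct and follows the same route as the paper: the paper's proof simply writes $v = r_S$, takes the lift $\tilde{g}$ from the preceding proposition, and concludes $v^g = (r^{\tilde g})_S \in \RtsLS$. You have merely spelled out the (correct) justifications the paper leaves implicit, namely that $\tilde{g}$ stabilizes $R = S^\perp$ and hence commutes with the orthogonal projection onto $S\tensor\R$, and that the conditions $r^{\tilde g}\in\Roots_{\L}$ and $(r^{\tilde g})_S^2<0$ are preserved.
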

\begin{proof}
For any $v\in \RtsLS$ and $g\in G$,
we have $r\in \Roots_{\L}$ such that $v=r_S$
and a lift $\tilde {g}\in \OG^+({\L})$ of $g$.
Then  $v^g=(r^{\tilde g})_S\in \RtsLS$ holds.
\end{proof}
By the definition of $\RtsLS$, 
every $\RtsLS\sphyp$-chamber $D$ is written as 
$$
D=\DDD\cap\PPP_S
$$
by some $\Roots_{\L}\sphyp$-chamber $\DDD$.
\begin{definition}\label{def:Snondeg}
An $\Roots_{\L}\sphyp$-chamber $\DDD$
is said to be \emph{$S$-nondegenerate} if  $D=\DDD\cap\PPP_S$
is an $\RtsLS\sphyp$-chamber.
In other words,
$\DDD$
is $S$-nondegenerate
if and only if
$\DDD\cap\PPP_S$ contains a non-empty open subset of $\PPP_S$.
\end{definition}
\begin{definition}\label{def:Sweylvector}
Let $D$ be an $\RtsLS\sphyp$-chamber, 
and let $\DDD$ be an $\Roots_{\L}\sphyp$-chamber such that $D=\DDD\cap\PPP_S$.
By certain abuse of terminology,
we say that the Weyl vector $w\in {\L}$ of  $\DDD$ is 
a \emph{Weyl vector of $D$}.
\end{definition}
\begin{remark}\label{rem:DDDs}
The set $\shortset{r\in \Roots_{\L}}{\PPP_S\subset (r)\sperp}$
is equal to $\Roots_{R}$.
In particular, the number of hyperplanes $(r)\sperp\in \Roots_{\L}\sphyp$
passing through a point $v\in \PPP_S$ is at least $|\Roots_{R}|/2$,
and is equal to $|\Roots_{R}|/2$ if $v$ is general.
We remark two consequences of this fact.
\par
Let $D$ be an $\RtsLS\sphyp$-chamber.
Then there exists a canonical one-to-one correspondence
between the set of $\Roots_{\L}\sphyp$-chambers $\DDD$ satisfying  $D=\DDD\cap\PPP_S$
and the set of connected components of
$$
(R\tensor\R) \setminus \bigcup_{\rho\in \Roots_R} [\rho]\sperp,
\quad \textrm{where}\quad
[\rho]\sperp:=\shortset{x\in R\tensor\R}{\intM{x, \rho}{R}=0}.
$$
If $\Roots_{\L}\sphyp$-chambers $\DDD$ and $\DDD\sprime$ satisfy $D=\DDD\cap\PPP_S=\DDD\sprime\cap\PPP_S$,
then there exists a sequence of reflections $s_i$ ($i=1, \dots, N$)
of $\L$ with respect to $\rho_i\in \Roots_R\subset \Roots_{\L}$ such that their product 
$s_1\cdots s_N$ maps $\DDD$ to $\DDD\sprime$.
Then the Weyl vector $w$ of $\DDD$ is mapped to the Weyl vector $w\sprime$ of $\DDD\sprime$
by $s_1\cdots s_N$.
Since each $(-2)$-vector $\rho_i$ is contained in $R$,
we  have $w_S=w\sprime_S$.
Therefore,
for an $\RtsLS\sphyp$-chamber $D$,
 $w_S\in S\dual$ is independent of the choice of a Weyl vector $w$ of $D$.
%
\par
Let $\DDD$ be an $\Roots_{\L}\sphyp$-chamber,
and let
 $v$ be a vector in $\DDD\cap \PPP_S$.
If the number of walls of  $\DDD$ 
 passing though $v$ is equal to  
$|\Roots_{R}|/2$,
then a small neighborhood of $v$ in $\PPP_S$ is  contained in $\DDD\cap \PPP_S$,
and hence $\DDD$ is $S$-nondegenerate,
and $D=\DDD\cap \PPP_S$ contains $v$ in its interior.
\end{remark}
Next we consider the property~$\propVthree$ for $\RtsLS$.
Let $w$ be a Weyl vector of $\L$,
and let $\DDD$ be the corresponding $\Roots_{\L}\sphyp$-chamber
(not necessarily $S$-nondegenerate).
Recall that the  $\Roots_{\L}$-minimal defining set
$\Delta_{\Roots_{\L}}(\DDD)$ of $\DDD$ is equal to $\shortset{r\in \Roots_{\L}}{\intM{w, r}{\L}=1}$.
We put  
\begin{equation}\label{eq:Deltasprime} 
\Delta_{w}:=\set{r\in \Delta_{\Roots_{\L}}(\DDD)}{r_S^2<0}=\set{r\in \Delta_{\Roots_{\L}}(\DDD)}{(r)\sperp\cap\PPP_S\ne\emptyset}
\end{equation}
and
\begin{equation} 
\pr_{S}(\Delta_w):=\shortset{r_S}{r\in\Delta_{w}}.
\end{equation}
Then
we have $\DDD\cap\PPP_S=\Sigma_{S} (\pr_S(\Delta_w))\cap \PPP_S$.
Therefore, if $\DDD$ is $S$-nondegenerate, then $\pr_{S}(\Delta_w)$ is a defining set of 
the $\RtsLS\sphyp$-chamber $D=\DDD\cap\PPP_S$.
\begin{proposition}\label{prop:proerty_c}
For any Weyl vector $w$ in $\L$, the set $\Delta_{w}$ is finite.
In particular, any $\RtsLS\sphyp$-chamber $D$ has a finite defining set,
and hence $\RtsLS$ satisfies $\propVthree$.
\end{proposition}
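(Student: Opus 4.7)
For $n = 10$ or $18$, the statement is immediate from Theorem~\ref{thm:Weyl}: the ambient set $\Delta_{\Roots_{\L}}(\DDD)$ already has cardinality $10$ or $19$, and $\Delta_w \subset \Delta_{\Roots_{\L}}(\DDD)$.

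The substantive case is $n = 26$, where $\Delta_{\Roots_{\L}}(\DDD)$ is infinite and $w^2 = 0$. I will decompose each $r \in \Delta_w$ via the orthogonal splitting $\L \tensor \R = (S \tensor \R) \oplus (R \tensor \R)$ as $r = r_S + r_R$ with $r_S \in S\dual$ and $r_R \in R\dual$. The constraints $r^2 = -2$ and $r_S^2 < 0$ force $r_R^2 \in (-2, 0]$, and since $R$ is a negative-definite lattice of finite rank the set $\{y \in R\dual : -2 < y^2 \le 0\}$ is finite. It therefore suffices to bound, for each fixed admissible $r_R$, the number of $r_S$. Decomposing $w = w_S + w_R$ similarly, the vector $r_S$ is confined to a fixed coset of $S$ in $S\dual$ (determined via $\delta_{\L}$), to the fixed level set $r_S^2 = -2 - r_R^2 < 0$, and to the fixed affine hyperplane $\intM{w_S, r_S}{} = 1 - \intM{w_R, r_R}{}$.

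The crux, and the main obstacle, is to show $w_S^2 > 0$. Since $w^2 = 0$ and $w_R^2 \le 0$, we have $w_S^2 \ge 0$; I will rule out $w_S^2 = 0$ using assumption~$\assumptwo$. If $w_S^2 = 0$, then $w_R^2 = 0$, and negative-definiteness of $R$ forces $w_R = 0$. Hence $w \in \L \cap (S \tensor \R) = S$, where the last equality uses primitivity of $S \inj \L$. Since $R \perp S$ and $w \in S$, the sublattice $R$ lies in $\gen{w}\sperp$ and meets $\gen{w} = \Z w$ only in $0$, so the natural projection embeds $R$ into $\gen{w}\sperp / \gen{w}$, which is isomorphic to the Leech lattice $\Lambda$ by Theorem~\ref{thm:ConwaySloane}; this contradicts~$\assumptwo$. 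Without $\assumptwo$ the hyperplane $\intM{w_S, r_S}{} = k$ could be parabolic and carry infinitely many lattice points on the level set $r_S^2 = d$, so this step is where the whole argument stands or falls.

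With $w_S^2 > 0$ in hand, $w_S$ is time-like in the hyperbolic space $S \tensor \R$, its orthogonal complement $w_S\sperp$ is negative-definite, and the affine hyperplane $\intM{w_S, x}{} = k$ therefore carries a negative-definite induced quadratic form. Consequently the level set $\{x \in S \tensor \R : \intM{w_S, x}{} = k, \; x^2 = d\}$ with $d < 0$ is a bounded ellipsoid in $S \tensor \R$, and discreteness of $S\dual$ yields finiteness; this is essentially Algorithm~\ref{algo:QLcab} after clearing denominators to pass from $S\dual$ to a scaled copy inside $S$. Combining with the finite range of $r_R$'s gives finiteness of $\Delta_w$, and the ``in particular'' clause follows since $\pr_S(\Delta_w)$ is a defining set of $D$ and is the image of the finite set $\Delta_w$.
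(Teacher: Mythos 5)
Your proof is correct and follows essentially the same route as the paper's: the crux in both is showing $w_S^2>0$, ruling out $w_S^2=0$ for $n=26$ by embedding $R$ into $\gen{w}\sperp/\gen{w}\cong\Lambda$ against $\assumptwo$, and then confining $r_R$ to a finite subset of $R\dual$ and $r_S$ to a compact ellipsoid in an affine hyperplane. The only (harmless) deviation is that for $n=10,18$ you invoke the finiteness of $\Delta_{\Roots_{\L}}(\DDD)$ directly, whereas the paper handles those cases uniformly via $w^2>0\Rightarrow w_S^2>0$.
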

\begin{proof}
First we show  $w_S^2>0$.
Note that
$w^2=w_S^2+w_R^2$ and $w_R^2\le 0$.
Hence $w_S^2>0$ holds when $n=10$ or $n=18$
by Theorem~\ref{thm:Weyl}.
Suppose that $n=26$ and $w_S^2=0$.
Then $w_R=0$ and $w=w_S$ hold.
Therefore $\gen{w}\sperp/\gen{w}$ would contain $R$,
which contradicts Theorem~\ref{thm:ConwaySloane} and the assumption~$\assumptwo$. 
\par
We denote by $d_R$ the order of the discriminant group $A_R$.
Then $d_Rv^2\in \Z$ and $d_R^2v^2\in2\Z$ hold for any $v\in R\dual$.
We put
\begin{equation}\label{eq:nR}
n_R:=\set{c\in \Q}{d_Rc\in \Z, \;\;d_R^2c\in 2\Z, \;\; -2< c\le 0},
\end{equation}
which is obviously finite.
Since $R$ is negative-definite, 
the set 
\begin{equation}\label{eq:Rdualc}
R\dual[c]:=\set{v\in R\dual}{v^2=c}
\end{equation}
is finite for each $c\in n_R$.
In particular, the set
\begin{equation}\label{eq:aRc}
a_R[c]:=\set{\intM{w_R, v}{R}}{v\in R\dual[c]}
\end{equation}
is finite for each $c\in n_R$. 
Suppose that $r\in\Delta_w$,  so that $r_S^2<0$.
Note that 
$$
r^2=r_S^2+r_R^2=-2\quand \intM{w, r}{{\L}}=\intM{w_S, r_S}{S}+\intM{w_R, r_R}{R}=1.
$$
Since $r_R^2\le 0$,
we have $-2\le r_S^2<0$ and $-2<r_R^2\le 0$.
Hence $n\sprime:=r_R^2$ belongs to  $n_R$,
and $r_R$ is an element of  $R\dual[n\sprime]$.
Since $w_S^2>0$,
the quadratic part of the inhomogeneous quadratic form $x\mapsto x^2$ on the affine hyperplane
$$
\set{x\in S\tensor\R}{\intM{w_S, x}{S}=b}
$$
of $S\tensor\R$
is negative-definite for any $b\in \R$.
 If we put $a\sprime:=\intM{w_R, r_R}{R} \in a_R[n\sprime]$, then  $r_S$ belongs to 
 the finite set
\begin{equation}\label{eq:Sdualna}
S\dual[n\sprime, a\sprime]:=\set{v\in S\dual}{\intM{w_S, v}{S}=1-a\sprime,\;\; v^2=-2-n\sprime}.
\end{equation}
Since $n_R$, $R\dual[n\sprime]$, $a_R[n\sprime]$  and $S\dual[n\sprime, a\sprime]$ are finite,
$\Delta_{w}$ is finite.
\end{proof}
We state the above proof in the form of an algorithm.
\begin{algorithm}\label{algorithm:Deltaw}
Suppose that a Weyl vector $w\in \L$  is given.
This algorithm calculates the set $\Delta_{w}$
defined by~\eqref{eq:Deltasprime}.
\par
\step{0}.
We calculate $w_S\in S\dual$, $w_R\in R\dual$,
and the set $n_R$ defined by~\eqref{eq:nR}.
We set $\Delta\sprime:=\{\}$.
\par
\step{1}.
For each $c\in n_R$, we calculate 
$R\dual[c]$ defined by~\eqref{eq:Rdualc} by  Algorithm~\ref{algo:QLc},
and calculate $a_R[c]$ defined by~\eqref{eq:aRc}.
\par
\step{2}.
Using Algorithm~\ref{algo:QLcab},
we calculate the finite set 
$S\dual[n\sprime, a\sprime]$
defined by~\eqref{eq:Sdualna}
for each pair of $n\sprime\in n_R$ and $a\sprime\in a_R[n\sprime]$.
\par
\step{3}.
For each triple 
$n\sprime\in n_R$, $v_R\in R\dual[n\sprime]$
and $v_S\in S\dual[n\sprime, a\sprime]$,  where $a\sprime=\intM{w_R, v_R}{R}$,
we determine whether $v_S+v_R \in S\dual\oplus R\dual$ belongs to  ${\L}$ or not,
and if $v_S+v_R\in {\L}$,
then we append $r:=v_S+v_R$ to $\Delta\sprime$.
\par
\step{4}.
Output $\Delta\sprime$ as $\Delta_w$.
\end{algorithm}
We give a criterion of the $S$-nondegeneracy of a given $\Roots_{\L}\sphyp$-chamber.
\begin{criterion}\label{criterion:Snondeg}
Let $\DDD$ be an $\Roots_{\L}\sphyp$-chamber with the Weyl vector $w$.
Then $\DDD$ is $S$-nondegenerate if and only if
there exists a vector  $v\in \PPP_S$ that satisfies the finite number of strict inequalities 
\begin{equation}\label{eq:ineqSnondeg}
\intM{v, r_S}{S}>0\quad\textrm{for any $r\in \Delta_w$}.
\end{equation}
If $v\in \PPP_S$ satisfies these inequalities,
then the  $\RtsLS\sphyp$-chamber $\DDD\cap\PPP_S$ contains $v$ in its interior.
\end{criterion}
Next we consider the property~$\propVfour$ for $\RtsLS$.
\begin{proposition}\label{prop:proerty_d}
For any  $\RtsLS\sphyp$-chamber $D$,
we have $\pi_S(\clD)\cap\bdr\clH_S\subset \bdrclHQ_S$.
In particular, $\RtsLS$ satisfies $\propVfour$.
\end{proposition}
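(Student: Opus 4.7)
My plan is to reduce to the corresponding statement for $\L$, namely Proposition~\ref{prop:bdrQ}, and then descend the resulting rational boundary point from $\L$ back to $S$.

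Write $D=\DDD\cap\PPP_S$ with $\DDD$ an $S$-nondegenerate $\Roots_\L\sphyp$-chamber, and pick $b\in\pi_S(\clD)\cap\bdr\clH_S$, so $b=\pi_S(u)$ for some non-zero $u\in\clD$ with $u^2=0$. Since $\PPP_\L$ was chosen to contain $\PPP_S$, we have $\clPPP_S\subset\clPPP_\L$ and $\clD\subset\clDDD$; moreover the form on $S$ is the restriction of the form on $\L$, so $u^2=0$ in $\L\tensor\R$ as well. Hence $\pi_\L(u)\in\pi_\L(\clDDD)\cap\bdr\clH_\L$, and Proposition~\ref{prop:bdrQ} supplies a non-zero vector $f\in\L\cap\clPPP_\L$ on the same ray as $u$, i.e.\ $f=\alpha u$ for some $\alpha>0$.

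The one delicate step I expect is upgrading the rationality $f\in\L\tensor\Q$ to $f\in S\tensor\Q$: a priori, Proposition~\ref{prop:bdrQ} only guarantees rationality inside the overlattice $\L$. Because $S$ is primitively embedded in $\L$ with orthogonal complement $R$, we have the $\Q$-orthogonal decomposition $\L\tensor\Q=(S\tensor\Q)\oplus(R\tensor\Q)$, and its real extension is the orthogonal decomposition $\L\tensor\R=(S\tensor\R)\oplus(R\tensor\R)$. Writing $f=f_S+f_R$ with $f_S\in S\tensor\Q$ and $f_R\in R\tensor\Q$, the equality $f=\alpha u\in S\tensor\R$ forces $f_R$ to vanish as an element of $R\tensor\R$, and hence as an element of $R\tensor\Q$. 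Thus $f=f_S\in(S\tensor\Q)\cap\clPPP_S$ is non-zero, so $b=\pi_S(u)=\pi_S(f)\in\bdrclHQ_S$. Beyond this purely linear-algebraic descent I foresee no further obstacle, since the genuinely substantive content—the explicit description of the boundary points of $\clDDD$ in each of the cases $n=10,18,26$—is already packaged in Proposition~\ref{prop:bdrQ} via Sections~\ref{subsec:10}--\ref{subsec:18} and Lemma~\ref{lem:property_d_L_26}.
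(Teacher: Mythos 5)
Your proof is correct and follows the same route as the paper: reduce to Proposition~\ref{prop:bdrQ} via $\clD\subset\closure{\DDD}$ and then descend rationality from $\L$ to $S$. The paper compresses your "delicate step" into the single assertion $\bdrclHQ_S=\bdrclH_S\cap\bdrclHQ_{\L}$ under the inclusion $\clH_S\inj\clH_{\L}$, which is exactly the orthogonal-decomposition argument ($f=f_S+f_R$ with $f_R=0$) that you spell out.
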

\begin{proof}
Let  $\DDD$ be an $\Roots_{\L}\sphyp$-chamber such that $D=\DDD\cap\PPP_S$.
Then we have $\clD =\closure{\DDD}\cap\clPPP_S$.
Since $\bdrclHQ_S=\bdrclH_S\cap\bdrclHQ_{\L}$ under the canonical inclusion  $\clH_S\inj \clH_{\L}$,
the assertion follows from Proposition~\ref{prop:bdrQ}.
\end{proof}
Thus 
the subset $\RtsLS$ of $\NNN_S\cap S\dual$  has 
the properties~$\propVone$-$\propVfour$.
Hence we can apply Algorithms~\ref{algorithm:mindefset},~\ref{algorithm:autG}
and~\ref{algorithm:Gequiv} described in Section~\ref{sec:chamber}
to  $\VVV=\RtsLS$. 
\begin{algorithm}\label{algorithm:aSnS}
Suppose that a Weyl vector $w\in \L$ of 
an $\RtsLS\sphyp$-chamber $D$ is given.
This algorithm calculates 
the primitively minimal defining set $\Delta_{S\dual}(D)$ of $D$.
\par
\step{0}.
We calculate $\Delta_w$ by Algorithm~\ref{algorithm:Deltaw}.
\par
\step{1}.
We calculate the defining set $\pr_S(\Delta_{w})$ of $D$.
\par
\step{2}.
We then calculate  $\Delta_{S\dual}(D)$ from $\pr_S(\Delta_{w})$ by Algorithm~\ref{algorithm:mindefset}.
\end{algorithm}
\begin{remark}\label{rem:prSDelta}
Let $D$ and $D\sprime$ be $\RtsLS\sphyp$-chambers.
By Algorithms~\ref{algorithm:autG},~\ref{algorithm:Gequiv} and~\ref{algorithm:aSnS},
we can calculate $\aut_G(D)$ and determine whether $D$ and $D\sprime$ are $G$-congruent or not 
from Weyl vectors $w$ of $D$ and $w\sprime$ of $D\sprime$.
Note that, by Remark~\ref{rem:DDDs},
the defining set $\pr_S(\Delta_w)$ of $D$ is independent of the choice of the Weyl vector $w$.
Moreover, by Proposition~\ref{prop:liftg},
if $g\in \aut_G(D)$, then $g$ preserves $\pr_S(\Delta_w)$,
and if $D\sprime=D^g$, then
$g$ maps $\pr_S(\Delta_w)$ to $\pr_S(\Delta_{w\sprime})$ bijectively.
Hence, when we apply Algorithms~\ref{algorithm:autG} and~\ref{algorithm:Gequiv}
to $\RtsLS\sphyp$-chambers, 
we can use $\pr_S(\Delta_w)$ and $\pr_S(\Delta_{w\sprime})$
in the place of the primitively defining sets $\Delta_{S\dual}(D)$ and $\Delta_{S\dual}(D\sprime)$.
\end{remark}
Let $w$ be a  Weyl vector of 
an $\RtsLS\sphyp$-chamber $D$,
and let 
$v$ be an element of  $\Delta_{S\dual}(D)$.
Then $(v)\sperp$ is a wall of $D$.
We calculate the $\RtsLS\sphyp$-chamber that is adjacent to $D$ across $(v)\sperp$.
First we prepare an auxiliary algorithm.
\begin{algorithm}\label{algorithm:passing}
Suppose that $v\in \NNN_S\cap S\dual$ is given.
We regard the hyperplane $(v)\sperp$ of $\PPP_S$ as a linear subspace of the larger space $\PPP_{\L}$;
that is, 
$(v)\sperp$  is of codimension 
$\rank R+1$ in $\PPP_{\L}$, whereas $(r)\sperp$ for $r\in \Roots_{\L}$ is of codimension $1$ in $\PPP_{\L}$.
This algorithm calculates the set 
\begin{equation}\label{eq:Pv}
P_v:=\set{r\in \Roots_{\L}}{(v)\sperp\subset (r)\sperp}=\set{r\in \Roots_{\L}}{r_S\in \R v}.
\end{equation}
We set $P:=\{\}$.
There exist only a finite number of 
$\alpha\in \Q$ such that
$\alpha v \in S\dual$ and $\alpha^2 v^2\ge -2$.
For each such rational number $\alpha$
and
each $u\in R\dual[c]$, where   $c=-2-\alpha^2 v^2$
and $R\dual[c]$ defined by~\eqref{eq:Rdualc},
we determine whether $\alpha v+u\in S\dual\oplus R\dual$ belongs to ${\L}$ or not,
and if  $\alpha v+u\in {\L}$,
then we append $r:=\alpha v+u\in {\L}$ to $P$.
Then we output $P$ as $P_v$.
(Remark that $P_v$ includes the subset  $\Roots_{R}$ of $\Roots_{\L}$.)
\end{algorithm}
%
%
We consider the linear subspace
$$
V:=\R v\oplus (R\tensor \R)
$$
of $\L\tensor\R$.
Let $\intM{\phantom{|}, \phantom{|}}{V}\colon V\times V\to \R$ denote 
the restriction of $\intM{\phantom{|}, \phantom{|}}{\L}$ to $V$.
Note that $\intM{\phantom{|}, \phantom{|}}{V}$ is negative-definite.
Let 
$x\mapsto x_V$ 
denote the orthogonal projection from ${\L}\tensor \R$ to $V$.
Each element $r$ of $P_v$ defined by~\eqref{eq:Pv} belongs to $V$.
Hence $r_V=r$ holds for any $r\in P_v$.
We denote by $\{\DDD_0, \dots, \DDD_m\}$
the set of  $\Roots_{\L}\sphyp$-chambers containing 
the hyperplane $(v)\sperp$ of $\PPP_S$ 
(that is, 
the linear subspace  of $\PPP_{\L}$
with codimension $\rank R+1$), 
and  put
$$
\DDD\spcirc_{j, V}:=
\set{x\in V}{\intM{x, r}{V}>0 \;\;\textrm{for any}\;\; r \in P_v\cap \Delta_{\Roots_{\L}}(\DDD_j)}.
$$
Then $\DDD_j\mapsto \DDD\spcirc_{j, V}$ gives
a one-to-one correspondence
from $\{\DDD_0, \dots, \DDD_m\}$ 
to the set of connected components
of
$$
V\;\setminus\; \bigcup_{r \in P_v}[r]\sperp_V, 
\;\;\;\;\textrm{where}\;\;\;\;
[r]_V\sperp:=\set{x\in V}{\intM{x, r}{V}=0}.
$$
Let $w_j$ be the Weyl vector of $\DDD_j$.
By renumbering $\DDD_0, \dots, \DDD_m$,
we can assume that
$D=\PPP_S\cap \DDD_0$ and  
that $w_0$ is the given Weyl vector $w$ of $D$.
Since 
$\intM{w_{j,V}, r}{V}=\intM{w_{j}, r}{\L}=1$
for any $r\in P_v\cap\Delta_{\Roots_{\L}}(\DDD_j)$,
the vector $w_{j, V}$ belongs to  $\DDD\spcirc_{j, V}$.
There exists an $\Roots_{\L}\sphyp$-chamber $\DDD_{\opp}$ among $\{\DDD_0, \dots, \DDD_m\}$ such that
$$
\DDD\spcirc_{\opp, V}=-\DDD\spcirc_{0, V}.
$$
Then 
$\PPP_S\cap \DDD_{\opp}$
is the $\RtsLS\sphyp$-chamber  $D\sprime$ adjacent to $D$ across $(v)\sperp$.
We calculate the Weyl vector of $\DDD_{\opp}$.
Let $u$ be a sufficiently general vector of ${\L}\tensor\Q$,
and let $\varepsilon$ be a sufficiently small positive real number.
Consider the oriented line segment
$$
p(t):=(1-t) w_V+t(-w_V+\varepsilon u_V)\qquad (0\le t\le 1)
$$
in $V$ from $w_{V}=w_{0, V}\in \DDD\spcirc_{0, V}$ 
to $-w_V+\varepsilon u_V\in \DDD\spcirc_{\opp, V}$.
Let 
$P\sprime_v=\{r_1, \dots, r_N\}\subset P_v$
be a complete set of representatives
of  $P_v/\{\pm 1\}$.
For $i=1, \dots, N$,
let $t_i$ be the value of $t$ such that $p(t_i) \in [r_i]_V\sperp$.
Since $\intM{x, r_i}{\L}=\intM{x_V, r_i}{V}$ for any
$x\in {\L}\tensor\R$,
we have
$$
t_i=\left(2-\varepsilon \frac{\intM{u, r_i}{\L}}{\intM{w, r_i}{\L}}\right)\inv.
$$
Since $u$ is general,
we can assume that $t_1, \dots, t_N$ are distinct.
We put the numbering of the elements $r_1, \dots, r_N$ of $P_v\sprime$ so that
$t_1<\dots<t_N$ holds.
Let $s_i\in \OG^+({\L})$ denote the reflection with respect to 
$r_i$.
Then $\DDD_0$ and $\DDD_{\opp}$ are related by
$$
\DDD_{\opp}=\DDD_{0}^{s_1 s_2\dots s_N}.
$$
Therefore $w^{s_1 s_2\dots s_N}$ is the Weyl vector of $\DDD_{\opp}$.
By this consideration, we obtain the following:
\begin{algorithm}\label{algorithm:adjacent}
Suppose that a Weyl vector $w\in\L$ of 
an $\RtsLS\sphyp$-chamber $D$ and
an element $v$ of $\Delta_{S\dual}(D)$
are  given.
This algorithm calculates a Weyl vector $w\sprime$ of 
the $\RtsLS\sphyp$-chamber $D\sprime$ adjacent to
$D$ across the wall $(v)\sperp$.
\par
We calculate the set
$P_v$
by Algorithm~\ref{algorithm:passing},
and choose  a complete set of representatives 
$P\sprime_v=\{r_1, \dots, r_N\}$
of  $P_v/\{\pm 1\}$.
We also choose a vector $u$ of ${\L}\tensor\Q$
such that $i\ne j$ implies 
${\intM{u, r_i}{\L}}/{\intM{w, r_i}{\L}} \ne  {\intM{u, r_j}{\L}}/{\intM{w, r_j}{\L}}$.
We sort the elements $r_i$  of $P\sprime_v$ so that
$$
i<j\;\;\Longrightarrow\;\;  
\frac{\intM{u, r_i}{\L}}{\intM{w, r_i}{\L}} <  \frac{\intM{u, r_j}{\L}}{\intM{w, r_j}{\L}}
$$
holds.
Then  $w^{s_1 s_2\dots s_N}$
is a Weyl vector of $D\sprime$,
where $s_i\in \OG^+(\L)$ is the reflection with respect to 
$r_i$.
\end{algorithm}
\section{The main algorithm}\label{sec:main}
We present our main algorithm,
and prove its termination and correctness. 
Let $G$, $S$ and $\L$ be as in the previous section.
Let $\Nc$ be an $\Roots_S\sphyp$-chamber.
Since $\Roots_S$ is contained in $\RtsLS$,
the $\Roots_S\sphyp$-chamber $\Nc$ is a union of $\RtsLS\sphyp$-chambers.
We fix  an $\RtsLS\sphyp$-chamber $D_0$
contained in $\Nc$.
An \emph{$\Nc$-chain}  
is a finite sequence 
$$
D\spar{0}, D\spar{1}, \dots, D\spar{l}
$$
of $\RtsLS\sphyp$-chambers  contained in $\Nc$ such that 
$D\spar{a+1}$ is adjacent to $D\spar{a}$ for each $a$.
The \emph{length} of an $\Nc$-chain  $D\spar{0}, \dots, D\spar{l}$ is 
defined to be $l$.
Let  $D$ be an $\RtsLS\sphyp$-chamber contained in $\Nc$.
Since $\Nc$ is connected,
there exists an $\Nc$-chain  $D\spar{0},  \dots, D\spar{l}$
such that $D\spar{0}=D_0$ and $D\spar{l}=D$.
The \emph{level  of $D$} is defined to be the minimum  of the lengths of all $\Nc$-chains 
from $D_0$ to $D$.
%
\par
In the actual execution of the following algorithm,
we use a Weyl vector to store an $\RtsLS\sphyp$-chamber 
in the computer memory.
Thus, for example,  the set $\Chams$ is realized  as a set of vectors of $\L$ in 
the computer.
\begin{algorithm}\label{algo:main}
Suppose that 
a Weyl vector $w_0$ of an $\RtsLS\sphyp$-chamber $D_0$
is given.
Let $\Nc$ be the $\Roots_{S}\sphyp$-chamber containing $D_0$.
This algorithm calculates
a finite set $\varGamma$ of generators of $\autG(\Nc)$,
a finite set  $\Chams$ of $\RtsLS\sphyp$-chambers contained in $\Nc$
such that the union 
\begin{equation}\label{eq:FN}
F_{\Nc}:=\bigcup_{D_i\in \Chams} D_i
\end{equation}
satisfies the property (2) in Proposition~\ref{prop:adj} below,
and a set $\BBB$ of $(-2)$-vectors of $S$  that satisfies the property (3) in Proposition~\ref{prop:adj}.
In fact,
the set  $\Chams$ is a complete set of representatives of $G$-congruence classes
of $\RtsLS\sphyp$-chambers,
and it is a union of non-empty subsets $\Chams_{\ell}$,
where elements of $\Chams_{\ell}$ are of level $\ell$.
\par
\smallskip

\step{0}.
Set   $\varGamma$ to be $\{\}$,
$\Chams_0$ to be $\{D_0\}$, and $\BBB$ to be $\{\}$.

\step{1}.
Calculate the set $\Delta_{S\dual}(D_0)$ from $w_0$ by Algorithm~\ref{algorithm:aSnS}.

\step{2}.
Execute $\procadj(0)$,
where $\procadj(\ell)$ is the following procedure,
which calls $\procadj(\ell+1)$ at the last step  if the condition for termination  is  not fulfilled.
\par
\smallskip 

{\it The procedure $\procadj(\ell)$.}
Suppose  that,
for  each $\lambda=0, \dots, \ell$,
a non-empty finite set $\Chams_{\lambda}$ of $\RtsLS\sphyp$-chambers with the following properties 
has been calculated.
\begin{itemize}
\item[\textrm{[D1]}] Each $D\in \Chams_{\lambda}$ is contained in $\Nc$,  and is of level $\lambda$, and 
\item[\textrm{[D2]}] if $D, D\sprime \in \cup_{\lambda=0}^{\ell}\Chams_{\lambda}$ are distinct,
then $D$ and $D\sprime$ are not $G$-congruent.
\end{itemize}
Suppose also that, for each $D \in \cup_{\lambda=0}^{\ell} \Chams_{\lambda}$,
we have calculated  $\Delta_{S\dual} (D)$.
This procedure calculates  $\Chams_{\ell+1}$ and $\Delta_{S\dual} (D)$  for each $D\in \Chams_{\ell+1}$.
\par
Let $D_{k+1}, \dots, D_{k+m}$ be the elements of $\Chams_{\ell}$, 
where $k$ is the sum of the cardinalities of $\Chams_{\lambda}$ with $\lambda<\ell$
(we put $k=-1$ if $\ell=0$), 
and $m$ is the cardinality of $\Chams_{\ell}$.
\begin{itemize}
\item[1.]
Put  $\Chams\sprime:=\{\}$ and set $l:=k+m+1$.
\item[2.]
For each $D_{i}\in \Chams_{\ell}$,
we make the following calculation.
\begin{itemize}
\item[2-1.]
Calculate the finite group $\autG(D_{i})$ from $\Delta_{S\dual}(D_{i})$
by Algorithm~\ref{algorithm:autG}, 
and append a finite set of generators  of $\autG(D_{i})$ to $\varGamma$.
\item[2-2.]
Note that $\autG(D_{i})$ acts on $\Delta_{S\dual}(D_{i})$.
Decompose   $\Delta_{S\dual}(D_{i})$ into the $\autG(D_{i})$-orbits
$o_1, \dots, o_t$.
Since $\Roots_S\sphyp$ is  $G$-invariant,
the set  $o_{\nu}\sphyp:=\shortset{(v)\sperp}{v\in o_{\nu}}$ 
is either disjoint from  $\Roots_S\sphyp$ or entirely contained in $\Roots_S\sphyp$.
Let $v$ be an element of $o_{\nu}$.
Since $v$ is primitive in $S\dual$,
we have $o_{\nu}\sphyp\subset \Roots_S\sphyp$
if and only if there exists a positive  integer $\alpha$ such that $\alpha^2 v^2=-2$ and $\alpha v\in S$.
\item[2-3.]
For each orbit $o_{\nu}$ such that $o_{\nu}\sphyp$ is  disjoint from  $\Roots_S\sphyp$,
we make the following calculation.
\begin{itemize}
\item[2-3-1.] Choose a  vector  $v \in o_{\nu}$, and calculate 
a Weyl vector $w\sprime\in {\L}$ of the $\RtsLS\sphyp$-chamber
$D\sprime$ adjacent to $D_{i}$ across the wall $(v)\sperp$
by Algorithm~\ref{algorithm:adjacent}.
We then calculate 
$\Delta_{S\dual}(D\sprime)$ 
 by Algorithm~\ref{algorithm:aSnS}.
Since $(v)\sperp\notin \Roots_S\sphyp$ and $D_i\subset \Nc$,
we have 
$D\sprime\subset \Nc$.
Since $D_i$ is of level $\ell$,
we see that $D\sprime$ is of level $\le \ell+1$.
\item[2-3-2.] 
By Algorithm~\ref{algorithm:Gequiv}, determine whether $D\sprime$ is $G$-congruent to  an $\RtsLS\sphyp$-chamber 
$D\spprime$ in 
$$
\tilde{\Chams}:=\Chams_0\cup\dots\cup \Chams_{\ell}\cup \Chams\sprime=\{D_0, D_1, \dots, D_{l-1}\}
$$
 or not.
If $D\sp{\prime}=D\sp{\prime\prime \, h}$ for some $D\spprime\in \tilde{\Chams}$ and $h\in G$, 
then  append $h$ to $\varGamma$.
If there exist no such $D\spprime\in \tilde{\Chams}$ and $h\in G$, then  
$D\sprime$ represents a new $G$-congruence class and its level is $\ell+1$, and hence we put 
$D_{l}:=D\sprime$, 
append $D_{l}$ to $\Chams\sprime$ and increment $l$ by $1$.
\end{itemize}
\item[2-4.]
For each orbit $o_{\nu}$ such that $o_{\nu}\sphyp\subset \Roots_S\sphyp$,
choose a vector $v\in o_{\nu}$,
find  a positive integer $\alpha$ 
such that $r:=\alpha v \in \Roots_{S}$,
and append $r$ to $\BBB$.
\end{itemize}
\item[3.] If $\Chams\sprime\ne \emptyset$, then 
put $\Chams_{\ell+1}:=\Chams\sprime$, which has the properties [D1] and [D2] above, 
 and  execute $\procadj(\ell+1)$.
If $\Chams\sprime= \emptyset$, then 
put 
$$
\Chams:=\Chams_0\cup \Chams_1\cup\cdots\cup  \Chams_{\ell},
$$
and terminate.
\end{itemize}
\end{algorithm}
\begin{proposition}\label{prop:terminates}
Algorithm~\ref{algo:main} terminates.
\end{proposition}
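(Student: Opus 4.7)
The plan is to prove termination by combining two finiteness facts: each individual recursive call $\procadj(\ell)$ performs only finitely many operations, and the depth of recursion is bounded.

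First, I would check that a single call $\procadj(\ell)$ terminates. The input set $\Chams_{\ell}$ is finite by construction, so the outer loop in step 2 is finite. For each $D_i \in \Chams_{\ell}$, the defining set $\Delta_{S\dual}(D_i)$ is finite by Proposition~\ref{prop:proerty_c} (which verifies $\propVthree$ for $\VVV = \RtsLS$). Theorem~\ref{thm:finiteautG} guarantees that $\autG(D_i)$ is a finite group, so Algorithm~\ref{algorithm:autG} terminates, and consequently the orbit decomposition into $o_1, \dots, o_t$ in step 2-2 produces only finitely many orbits. For each orbit, Algorithm~\ref{algorithm:adjacent} produces a Weyl vector $w'$ of the adjacent $\RtsLS\sphyp$-chamber $D'$ in finitely many steps, Algorithm~\ref{algorithm:aSnS} computes $\Delta_{S\dual}(D')$ in finitely many steps, and Algorithm~\ref{algorithm:Gequiv} then checks $G$-congruence of $D'$ against the finite set $\tilde{\Chams}$ by iterating over finitely many tuples. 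Hence step 2 in its entirety finishes after finitely many operations.

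Next I would bound the recursion depth. By the invariants $\mathrm{[D1]}$ and $\mathrm{[D2]}$ maintained by the algorithm, the chambers in $\Chams_0 \cup \Chams_1 \cup \cdots \cup \Chams_{\ell}$ all lie in $\Nc$ and are pairwise non-$G$-congruent. Proposition~\ref{prop:proerty_c} and Proposition~\ref{prop:proerty_d} show that $\RtsLS$ satisfies $\propVone$-$\propVfour$, so Theorem~\ref{thm:finiteGequiv} applies and yields a finite upper bound $C$ on the number of $G$-congruence classes of $\RtsLS\sphyp$-chambers. Thus
\[
|\Chams_0| + |\Chams_1| + \cdots + |\Chams_{\ell}| \le C
\]
for every $\ell$ reached during the recursion. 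On the other hand, the procedure calls $\procadj(\ell+1)$ only when $\Chams' \ne \emptyset$, i.e.\ when at least one new class is discovered at level $\ell+1$. Hence the number of levels that are actually visited cannot exceed $C$, and the recursion must eventually enter a call in which $\Chams' = \emptyset$, at which point the algorithm terminates.

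The conceptual heavy lifting is therefore entirely contained in Theorems~\ref{thm:finiteGequiv} and~\ref{thm:finiteautG} together with the verification in Section~\ref{sec:Borcherds} that $\RtsLS$ satisfies $\propVone$-$\propVfour$; once these are invoked, the only remaining obstacle is bookkeeping, namely checking that the invariants $\mathrm{[D1]}$ and $\mathrm{[D2]}$ are preserved from one level to the next. For $\mathrm{[D1]}$, step 2-3-1 only follows walls with $(v)\sperp \notin \Roots_S\sphyp$, so the adjacent chamber $D'$ remains inside the same $\Roots_S\sphyp$-chamber $\Nc$, and its level is at most $\ell + 1$ since it is adjacent to a level-$\ell$ chamber; it equals $\ell + 1$ precisely when $D'$ is not already $G$-congruent to one of $D_0, \dots, D_{l-1}$, which is exactly the condition under which $D'$ is appended to $\Chams'$. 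For $\mathrm{[D2]}$, the explicit $G$-congruence test in step 2-3-2 ensures no duplicates are introduced. With these invariants in place, the bound on the depth gives the termination.
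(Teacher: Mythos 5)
Your proof is correct and follows the same route as the paper: the paper's own argument simply observes that the chambers accumulated in $\Chams_0\cup\dots\cup\Chams_{\ell}\cup\Chams\sprime$ are pairwise non-$G$-congruent by construction and then invokes Theorem~\ref{thm:finiteGequiv}. You additionally spell out why each individual call of $\procadj(\ell)$ terminates (via the finiteness of $\Delta_{S\dual}(D)$ and of $\aut_G(D)$) and why the invariants [D1], [D2] persist, which the paper leaves implicit; these details are all accurate.
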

\begin{proof}
By construction,
any two distinct $\RtsLS\sphyp$-chambers in 
$\Chams_0\cup\dots\cup \Chams_{\ell} \cup \Chams\sprime$ are not $G$-congruent
during the calculation.
Thus Proposition~\ref{prop:terminates} follows from Theorem~\ref{thm:finiteGequiv}.
\end{proof}
\begin{proposition}\label{prop:adj}
{\rm (1)} 
The group  $\autG(\Nc)$ is generated by $\varGamma$.

{\rm (2)} 
For any $v\in \Nc$, there exists  an element $g\in \autG(\Nc)$  such that $v^g\in F_{\Nc}$,
where $F_{\Nc}$ is defined by~\eqref{eq:FN}.

{\rm (3)} 
Let $r$ be an element of the $\Roots_S$-minimal defining set $\Delta_{\Roots_S}(\Nc)$
of $\Nc$.
Then there exists  an element $g\in \autG(\Nc)$  such that $r^g\in \BBB$.
\end{proposition}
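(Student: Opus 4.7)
The plan is to derive all three assertions from a single structural claim about the output of Algorithm~\ref{algo:main}:

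\textbf{Main claim.} For every $\RtsLS\sphyp$-chamber $E$ contained in $\Nc$ there exist $D_i \in \Chams$ and $g \in \langle \varGamma \rangle \cap \autG(\Nc)$ with $E = D_i\sp g$.

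The proof of the main claim rests on one geometric observation followed by an induction on level. The observation is that any two adjacent $\RtsLS\sphyp$-chambers $E, E\sp\sharp \subset \Nc$ share a wall \emph{not} contained in $\Roots_S\sphyp$, for a wall in $\Roots_S\sphyp$ would be a wall of $\Nc$ itself and would separate $E$ from $E\sp\sharp$. Hence the $\RtsLS\sphyp$-chambers inside $\Nc$ are connected by precisely the adjacencies inspected in step~2-3 of $\procadj$, and the level of every $E \subset \Nc$ is a well-defined finite integer. I would then induct on $\ell := \mathrm{level}(E)$. The case $\ell = 0$ is trivial. For $\ell \ge 1$, choose $E\sp\sharp$ adjacent to $E$ of level $\ell - 1$ across a non-$\Roots_S\sphyp$ wall, and apply the inductive hypothesis to obtain $D_j \in \Chams_\lambda$ (with $\lambda \le \ell - 1$) and $h\sp\sharp \in \langle\varGamma\rangle \cap \autG(\Nc)$ satisfying $E\sp\sharp = D_j\sp{h\sp\sharp}$. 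Then $E\sp{(h\sp\sharp)\inv}$ is adjacent to $D_j$ across a non-$\Roots_S\sphyp$ wall, and so is one of the chambers inspected in step~2-3 when $\procadj(\lambda)$ processes $D_j$. Step~2-3-2 either identifies $E\sp{(h\sp\sharp)\inv} = D_k\sp h$ for some $D_k \in \tilde{\Chams}$ and $h \in G$ (added to $\varGamma$), giving $E = D_k\sp{h \cdot h\sp\sharp}$, or else adds $E\sp{(h\sp\sharp)\inv}$ to $\Chams_{\lambda+1}$ as a new $D_l$, giving $E = D_l\sp{h\sp\sharp}$. In both cases the composite element takes a chamber inside $\Nc$ to another chamber inside $\Nc$, so by~\eqref{eq:intpt} it stabilizes $\Nc$; this closes the induction.

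Granted the main claim, the three assertions follow quickly. For~(2), given $v \in \Nc$, pick $E \subset \Nc$ containing $v$ and apply $g\inv \in \autG(\Nc)$ to place $v$ inside $D_i \subset F_\Nc$. For~(3), given $r \in \Delta_{\Roots_S}(\Nc)$, pick $E \subset \Nc$ having $(r)\sperp$ as a wall and transport it onto $D_i \in \Chams$; the image of $r$ is then a primitive $(-2)$-vector of $S$ defining a wall of $D_i$ in $\Roots_S\sphyp$, hence lies on the line of some $v \in \Delta_{S\dual}(D_i)$ in an orbit $o_\nu$ with $o_\nu\sphyp \subset \Roots_S\sphyp$. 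A further element of $\autG(D_i) \subset \autG(\Nc)$ — whose generators are in $\varGamma$ by step~2-1 — carries this image onto the representative $v$ chosen in step~2-4; since both the transported $r$ and $\alpha v \in \BBB$ are primitive $(-2)$-vectors of $S$ on the same line pointing into $D_i$ (for $r$, because elements of $\Delta_{\Roots_S}(\Nc)$ point into $\Nc \supset D_i$), they coincide. For~(1), take $g \in \autG(\Nc)$ and apply the main claim to $D_0\sp g$: it yields $g_0 \in \langle\varGamma\rangle \cap \autG(\Nc)$ and $D_i \in \Chams$ with $D_0\sp g = D_i\sp{g_0}$. Property [D2] forces $D_i = D_0$, whence $g g_0\inv \in \autG(D_0) \subset \langle\varGamma\rangle$ (by step~2-1 applied to $i = 0$) and therefore $g \in \langle\varGamma\rangle$.

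The main technical obstacle will be the bookkeeping inside the inductive step of the main claim: at each adjacency one must identify the element of $G$ realizing the congruence and verify that the accumulated composition has not escaped $\autG(\Nc)$. The restriction of step~2-3 to non-$\Roots_S\sphyp$ walls keeps the underlying $\Nc$-chain inside $\Nc$, and the ``two interior points determine the chamber'' principle~\eqref{eq:intpt} then ensures that these elements land in $\autG(\Nc)$ rather than merely in $G$. Once these points are in place, the orientation matching in~(3) and the single-chamber reduction in~(1) are routine.
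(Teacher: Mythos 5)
Your proposal follows essentially the same route as the paper: your Main Claim is exactly the paper's Claim~5.3 (every $\RtsLS\sphyp$-chamber in $\Nc$ is $\gen{\varGamma}$-equivalent to a member of $\Chams$), your derivations of (1)--(3) from it coincide with the paper's, and your induction on level is just the constructive form of the paper's descent on the length of a minimal $D$-admissible $\Nc$-chain.

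One step of your inductive argument is stated too strongly and would fail as written: you assert that $E^{(h^\sharp)\inv}$, being adjacent to $D_j$ across a non-$\Roots_S\sphyp$ wall, ``is one of the chambers inspected in step~2-3.'' It need not be --- step~2-3-1 computes the adjacent chamber only across \emph{one representative wall per $\autG(D_j)$-orbit}. You must first compose with an element $g\in\autG(D_j)$ carrying the wall of $E^{(h^\sharp)\inv}$ to the chosen representative $v\in o_\nu$, so that it is $E^{(h^\sharp)\inv g}$ that the algorithm actually examines; the generators of $\autG(D_j)$ lie in $\varGamma$ by step~2-1, so the accumulated element stays in $\gen{\varGamma}$, and \eqref{eq:intpt} keeps it in $\autG(\Nc)$. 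This is precisely the move the paper makes in its proof of the Claim, and you carry out the analogous orbit-matching explicitly in your treatment of~(3), so the omission is local and easily patched rather than a conceptual error.
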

\begin{proof} 
Since each $D_i\in \Chams$ is contained in $\Nc$,
we have $\autG(D_i)\subset \autG(\Nc)$ by~\eqref{eq:intpt},
and hence all elements of $\varGamma$ appended in Step 2-1 is an element of $\autG(\Nc)$.
If $h\in G$ is appended to $\varGamma$ in Step 2-3-2,
then we have $D\sp{\prime}=D\sp{\prime\prime \, h}$ for some $\RtsLS\sphyp$-chambers $D\sprime$ and $D\spprime$
contained in $\Nc$, and hence $h\in \autG(\Nc)$ by~\eqref{eq:intpt}.
Therefore the subgroup $\gen{\varGamma}$ of $G$ generated by $\varGamma$ is contained in $\autG(\Nc)$.
\par
To prove the rest of Proposition~\ref{prop:adj}, 
it is enough to show the following.
This claim also proves that $\Chams$ is a complete set of representatives of 
$G$-congruence classes of $\RtsLS\sphyp$-chambers contained in $\Nc$.
\begin{claim}\label{claim}
For an arbitrary $\RtsLS\sphyp$-chamber $D$ contained in $\Nc$,
 there exists an element  $\gamma\in \gen{\varGamma}$
such that $D^\gamma\in \Chams$.
\end{claim}
Indeed, suppose that Claim~\ref{claim} is proved.
Let $g$ be an arbitrary element of $\autG(\Nc)$.
Since $D_0^g\subset \Nc$, there exists an element $\gamma\in \gen{\varGamma}$
such that  $(D_0^g)^\gamma$ is equal to some $D_i \in \Chams$.
Since  $D_i$ and $D_0$ are $G$-congruent and $D_i, D_0\in \Chams$, we have $D_0=D_i$.
Therefore  $g\gamma \in \autG(D_0)\subset \gen{\varGamma}$ follows,
and hence we have $g\in \gen{\varGamma}$.
Let $v\in \Nc$ be an arbitrary vector,
and let $D$ be an $\RtsLS\sphyp$-chamber containing $v$ and contained in $\Nc$.
By Claim~\ref{claim},
we have $\gamma\in \gen{\varGamma}$
such that 
$D^\gamma=D_i \in \Chams$. 
Then we have 
$v^\gamma\in F_{\Nc}$.
Suppose that $r\in \Delta_{\Roots_S}(\Nc)$.
Then there exist an $\RtsLS\sphyp$-chamber $D$ contained in $\Nc$ and 
a vector $v\in \Delta_{S\dual}(D)$ 
such that $r=\alpha v$ for some positive integer $\alpha$.
By Claim~\ref{claim},
we have $\gamma\in \gen{\varGamma}$
such that  $D^\gamma=D_i \in \Chams$.
Then  $v^\gamma \in \Delta_{S\dual}(D_i)$ is contained in an $\aut_{G}(D_i)$-orbit  $o_{\nu}$
such that $o_{\nu}\sphyp\subset\Roots_{S}\sphyp$.
Hence there exists  an element $\gamma\sprime\in \aut_{G}(D_i)$ such that
$r^{\gamma \gamma\sprime}$ is appended to $\BBB$ in Step 2-3-3.
\par
Now we prove Claim~\ref{claim}.
We fix an $\RtsLS\sphyp$-chamber $D$ contained in $\Nc$,
and prove the existence of  $\gamma\in \gen{\varGamma}$ such that $D^\gamma\in \Chams$.
An $\Nc$-chain $D\spar{0}, D\spar{1}, \dots, D\spar{l}$ of $\RtsLS\sphyp$-chambers 
is said to be \emph{$D$-admissible} if $D\spar{0}$ belongs to $\Chams$
and there exists  an element  $\gamma\in \gen{\varGamma}$ such that $D\spar{l}=D^{\gamma}$.
Since  $\Nc$ is connected, 
there exists at least one  $D$-admissible $\Nc$-chain.
It is enough to show that
there exists a $D$-admissible $\Nc$-chain of  length $0$.
We suppose that the  $D$-admissible $\Nc$-chain with \emph{minimal} length 
$$
D\spar{0}=D_i, \; D\spar{1}, \; \dots, \; D\spar{l}=D^{\gamma}\qquad(D_i\in \Chams, \; \gamma\in \gen{\varGamma})
$$
is of length $l>0$, and 
derive a contradiction.
Let $v\sprime\in \Delta_{S\dual}(D_i)$ be the vector such that $(v\sprime)\sperp$ is the wall between 
$D\spar{0}=D_i$ and $D\spar{1}$.
Since $D_i$ and $D\spar{1}$ are contained in $\Nc$,
the $\autG(D_i)$-orbit
 $o_{\nu}\subset \Delta_{S\dual}(D_i)$   containing $v\sprime$
 satisfies $o_{\nu}\sphyp\cap \Roots_{S}\sphyp=\emptyset$.
Let $v$ be the  vector of 
 $o_{\nu}$ chosen in Step 2-3-1,
and let $g\in \autG(D_i)$ be an element that maps $v\sprime$ to $v$.
Then $D^{(1)g}$ is the $\RtsLS\sphyp$-chamber adjacent to $D\spar{0}=D_i$
across $(v)\sperp$.
Since $g\in \gen{\varGamma}$, 
the $\Nc$-chain
$$
D\spar{0}=D_i, \, D^{(1)g}, \, \dots, \, D^{(l)g}=D^{\gamma g}
$$
is $D$-admissible.
By the minimality of $l$,
$D^{(1) g}$ does not belong to  $\Chams$.
Hence, in Step 2-3-2, the  $\RtsLS\sphyp$-chamber
$D^{(1) g}$ is not appended to $\Chams\sprime$,
which means that there exist a chamber $D\spprime\in\Chams$ and and an element $h\in G$
such that $D\sp{(1) g h}=D\spprime$.
The element  $h\inv$ is appended to $\varGamma$ in Step 2-3-2,
and hence $\gamma  g h\in \gen{\varGamma}$.
Then
$$
D^{(1) g h}=D\spprime, \, D^{(2) g h}, \, \dots, D^{(l) g h}=D^{\gamma g h}
$$
is a $D$-admissible $\Nc$-chain of length $l-1$,
which is a contradiction.
\end{proof}
\begin{remark}\label{rem:weakfund}
For $D_i\in \Chams$,
let $E(D_i)\subset D_i$ be a fundamental domain of the action of the finite group $\aut_{G}(D_i)$
on $D_i$. 
Then their union $\bigcup_{D_i\in \Chams} E(D_i)$
is a fundamental domain of the action of $\aut_{G}(\Nc)$ on $\Nc$.
In particular,
if we have $|\aut_{G}(D)|=1$ for any $D\in \Chams$, 
then $F_{\Nc}$ is a fundamental domain of the action of $\aut_{G}(\Nc)$ on $\Nc$.
\end{remark}
\begin{remark}
When 
 Algorithm~\ref{algo:main} is applied to the case $G=\OG^+(S)$, we see that $\OG^+(S)$
is generated by $\varGamma$ and the reflections $\shortset{s_r}{r\in \BBB}$.
\end{remark}
\begin{remark}\label{rem:Rroots}
Suppose that $n=26$ and that $R$ contains  a root lattice
as a sublattice of finite index.
By  Borcherds~\cite[Lemma 5.1]{MR913200},
there exist only a small number of  $\OG\sp+(S)$-congruence classes of $\RtsLS\sphyp$-chambers. 
\end{remark}
\begin{remark}\label{rem:speedup}
By Remark~\ref{rem:prSDelta}, 
we can make Step 2-3 faster;
namely, we can determine whether $D\sprime$ represents a new $G$-congruence class or not by
using $\pr_S(\Delta_{w\sprime})$ instead of $\Delta_{S\dual}(D\sprime)$.
\end{remark}
\section{The automorphism group of a $K3$ surface}\label{sec:AutX}
In this section, we review the classical theory of the automorphism groups of  $K3$ surfaces.
Let $X$ be an algebraic  $K3$ surface
with the N\'eron-Severi lattice $S_X$
of rank $>1$.
Then $S_X$ is an even hyperbolic lattice.
Let $\PPP(X)$ denote the positive cone of $S_X$ containing an ample class.
The nef cone $\Nef(X)$ of $X$ is defined by
$$
\Nef(X):=\set{v\in S_X\tensor\R}{\intM{v, [C]}{}\ge 0\;\;\textrm{for any curve $C$ on $X$}},
$$
where $[C]\in S_X$ is the class of a curve $C$.
Note that, by Kleiman's criterion, 
$\Nef(X)$ is contained in the closure $\clPPP(X)$ of $\PPP(X)$ in $S_X\tensor\R$.
We put
$$
\Nc(X):=\Nef(X)\cap \PPP(X)=\Nef(X)\setminus (\Nef(X) \cap \bdr \clPPP(X)).
$$
It is known that $\Nc(X)$ is an $\Roots_{S_X}\sphyp$-chamber,
and that the $\Roots_{S_X}$-minimal defining set $\Delta_{\Roots_{S_X}}(\Nc(X))$
consists of the classes of smooth rational curves on $X$~(see, for example, Rudakov-Shafarevich~\cite{MR633161}).
For simplicity, we put
$$
\aut(\Nc(X)):=\set{g\in \OG^+({S_X})}{\Nc(X)^g=\Nc(X)}=\aut_{\OG^+({S_X})}(\Nc(X)).
$$
By a \emph{$(-2)$-wall},
we mean a wall bounding $\Nc(X)$;
that is, a wall $([C])\sperp$, where $C$ is a smooth rational curve on $X$.
\subsection{Complex $K3$ surfaces}\label{subsec:AutXC}
Suppose that  $X$ is defined over $\C$.
With the cup-product, 
the second cohomology group $H:=H^2(X, \Z)$ is an even unimodular lattice of signature
$(3, 19)$.
Let $T_X$ denote the orthogonal complement of ${S_X}$ in $H$,
which we call the \emph{transcendental lattice} of $X$.
We  regard a non-zero holomorphic $2$-form $\omega_X$ on $X$ as a vector of ${T_X}\tensor\C$, and put
\begin{equation}\label{eq:CX}
C_{X}:=\set{g\in \OG({T_X})}{\omega_X^g=\lambda\, \omega_X\;\;\textrm{for some}\;\; \lambda\in \C\sptimes}.
\end{equation}
%
Since $H$ is  unimodular,
the subgroup $H/({S_X}\oplus {T_X})$ of the discriminant group $A_{S_X}\oplus A_{T_X}$
of ${S_X}\oplus {T_X}$ 
is the graph of an isomorphism
$$
\delta_{ST}\colon (A_{S_X}, q_{S_X})\;\isom\;(A_{T_X}, -q_{T_X})
$$
by  Nikulin~\cite[Proposition~1.6.1]{MR525944}, 
which induces an isomorphism of the automorphism groups 
$\delta_{ST*}\colon \OG(q_{S_X})\isom \OG(q_{T_X})$
of discriminant forms.
Recall that, for an even lattice $L$,   we have a natural homomorphism $\eta_L\colon \OG(L)\to \OG(q_L)$.
\begin{theorem}[Piatetski-Shapiro and Shafarevich~\cite{MR0284440}]\label{thm:aut} 
Via the natural actions of $\Aut(X)$ on the lattices ${S_X}$ and  ${T_X}$, 
the automorphism group $\Aut(X)$ of $X$ is identified with 
$$
\set{(g, h)\in \aut(\Nc(X))\times C_{X}}{\delta_{ST*}(\eta_{S_X}(g))=\eta_{T_X}(h)}.
$$
\end{theorem}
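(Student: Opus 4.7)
The plan is to check both directions of the claimed identification, using the (strong) Torelli theorem of Piatetski-Shapiro and Shafarevich as the main input. I treat the forward direction (that every automorphism gives a pair in the described subgroup) as the easy bookkeeping, and the reverse direction (that every compatible pair comes from a geometric automorphism) as the place where Torelli is essential.

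First I would set up the forward map. Any $\sigma\in\Aut(X)$ induces an isometry $\sigma^*$ of $H^2(X,\Z)$ which preserves the Hodge decomposition. Since $\omega_X$ spans $H^{2,0}(X)$, we have $\sigma^*\omega_X=\lambda\,\omega_X$ for some $\lambda\in\C\sptimes$; as $T_X$ is the minimal primitive sublattice of $H$ whose complexification contains $\omega_X$, the isometry $\sigma^*$ preserves the orthogonal decomposition $H\subset S_X\dual\oplus T_X\dual$, and the restriction $h:=\sigma^*|_{T_X}$ lies in $C_X$. Similarly $g:=\sigma^*|_{S_X}$ is an isometry of $S_X$ which, since $\sigma^*$ sends ample classes to ample classes, preserves $\Nc(X)$ and so belongs to $\aut(\Nc(X))$. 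The compatibility $\delta_{ST*}(\eta_{S_X}(g))=\eta_{T_X}(h)$ then follows from the general fact (Nikulin~\cite{MR525944}) that an isometry of $S_X\oplus T_X$ extends to the overlattice $H$ iff it preserves the graph $H/(S_X\oplus T_X)$ of $\delta_{ST}$ inside $A_{S_X}\oplus A_{T_X}$.

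Next I would show injectivity of $\sigma\mapsto(g,h)$. Since $H=H^2(X,\Z)$ is reconstructed from $(S_X,T_X,\delta_{ST})$, the pair $(g,h)$ determines $\sigma^*$ on $H$. The classical fact that $\Aut(X)\inj \OG(H^2(X,\Z))$ for a K3 surface then yields injectivity. (This is a standard consequence of Torelli: an automorphism acting trivially on $H^2$ is the identity.)

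For surjectivity, fix $(g,h)\in\aut(\Nc(X))\times C_X$ with $\delta_{ST*}(\eta_{S_X}(g))=\eta_{T_X}(h)$. The compatibility condition is exactly what is needed, via Nikulin~\cite[Proposition 1.6.1]{MR525944}, to glue $(g,h)$ into an isometry $\phi\in\OG(H)$ of the unimodular lattice $H$. By construction $\phi$ preserves $\omega_X$ up to a non-zero scalar (so it preserves the Hodge structure) and maps the positive cone containing an ample class to itself, and moreover sends $\Nc(X)$ to $\Nc(X)$, which is precisely the condition that $\phi$ sends the ample cone into itself. The strong Torelli theorem of Piatetski-Shapiro and Shafarevich~\cite{MR0284440} then yields a unique $\sigma\in\Aut(X)$ with $\sigma^*=\phi$, whose image under the forward map is $(g,h)$.

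The main obstacle is of course the surjectivity step: it rests entirely on invoking the strong Torelli theorem in the correct form (Hodge isometry plus preservation of the ample cone lifts to an automorphism). The other delicate point is verifying that the gluing of $(g,h)$ to an isometry of $H$ actually preserves both the period line and the ample cone; the first is immediate from $h\in C_X$, and the second from $g\in\aut(\Nc(X))$ together with the fact that $\phi$ acts on $S_X$ as $g$ and the ample cone is determined by $S_X$ alone.
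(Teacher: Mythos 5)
Your argument is correct and is the standard derivation: the paper itself gives no proof of this theorem, citing it as a classical result of Piatetski-Shapiro and Shafarevich, and your reconstruction (restriction of $\sigma^*$ to $S_X$ and $T_X$ for the forward map, Nikulin's overlattice-gluing criterion for the compatibility condition and for assembling a Hodge isometry of $H^2(X,\Z)$ from a compatible pair, and the strong Torelli theorem together with preservation of the ample cone for surjectivity and injectivity) is exactly the intended route. No gaps.
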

Since $\OG(q_{T_X})$ is finite,
the subgroup
\begin{equation}\label{eq:GXcomplex}
G_X:=\set{g\in \OG^+({S_X})}{\delta_{ST*}(\eta_{S_X}(g)) \in \eta_{T_X}(C_X)}
\end{equation}
of $\OG^+({S_X})$
has  finite index.
\begin{corollary}\label{cor:KerIm}
The kernel of the natural homomorphism $\varphi_X\colon \Aut(X)\to \OG(S_X)$
is isomorphic to $\Ker (\eta_{T_X}) \cap {C_X}$, and its image 
is equal to 
$$
\aut_{G_X}(\Nc(X))=\shortset{g\in G_X}{\Nc(X)^g=\Nc(X)}.
$$
\end{corollary}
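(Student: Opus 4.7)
The plan is to unpack the identification given by Theorem~\ref{thm:aut}: every $a \in \Aut(X)$ corresponds to a pair $(g,h) \in \aut(\Nc(X)) \times C_X$ satisfying $\delta_{ST*}(\eta_{S_X}(g))=\eta_{T_X}(h)$, where $g$ is the action of $a$ on $S_X$ and $h$ is the action on $T_X$. Under this identification, $\varphi_X$ is literally the projection onto the first factor $(g,h)\mapsto g$, so both claims of the corollary become straightforward bookkeeping.

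For the kernel, I would note that $(g,h)\in \Ker(\varphi_X)$ if and only if $g=1_{S_X}$. When $g=1$ the compatibility condition reduces to $1=\eta_{T_X}(h)$, so $h\in \Ker(\eta_{T_X})$; combined with $h\in C_X$, the kernel is precisely $\{1\}\times (\Ker(\eta_{T_X})\cap C_X)$. Projecting onto the second factor yields the asserted isomorphism $\Ker(\varphi_X)\cong \Ker(\eta_{T_X})\cap C_X$.

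For the image, I would check the two inclusions separately. If $g=\varphi_X(a)$ comes from $a\leftrightarrow (g,h)$, then $g\in \aut(\Nc(X))$ and $\delta_{ST*}(\eta_{S_X}(g))=\eta_{T_X}(h)\in \eta_{T_X}(C_X)$, so $g\in G_X$ by the definition~\eqref{eq:GXcomplex}; this shows $\Image(\varphi_X)\subset \aut_{G_X}(\Nc(X))$. Conversely, if $g\in \aut_{G_X}(\Nc(X))$, then by the definition of $G_X$ there exists $h\in C_X$ with $\delta_{ST*}(\eta_{S_X}(g))=\eta_{T_X}(h)$, so the pair $(g,h)$ satisfies the hypothesis of Theorem~\ref{thm:aut} and thus lifts to some $a\in \Aut(X)$ with $\varphi_X(a)=g$.

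No serious obstacle is expected: the corollary is a direct translation of Theorem~\ref{thm:aut} combined with the definition of $G_X$. The only minor points to verify are that $\aut(\Nc(X))\subset \OG^+(S_X)$, which is immediate because $\Nc(X)$ lies in the fixed positive cone $\PPP(X)$, and that any automorphism of $X$ indeed preserves $\Nc(X)$, which follows from the fact that $\Aut(X)$ sends ample classes to ample classes and nef classes to nef classes.
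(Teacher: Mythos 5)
Your proposal is correct and is exactly the intended argument: the paper offers no separate proof because the corollary is an immediate unpacking of Theorem~\ref{thm:aut} together with the definition~\eqref{eq:GXcomplex} of $G_X$, which is precisely what you carry out. Both directions of the image computation and the identification of the kernel with $\{1\}\times(\Ker(\eta_{T_X})\cap C_X)$ are handled correctly.
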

\subsection{Supersingular $K3$ surfaces in odd characteristics}\label{subsec:AutssK3}
Suppose that  $X$ is a supersingular $K3$ surface defined over an algebraically closed field
$k$ of \emph{odd} characteristic $p$.
By Artin~\cite{MR0371899},
we know that the discriminant group  $\discgr{{S_X}}$
is a $p$-elementary abelian group of rank $2\sigma$,
where $\sigma$ is a positive integer $\le 10$,
which is called the \emph{Artin invariant of $X$}.
The $\F_p$-vector space 
$S_0:=p S_X\dual /p{S_X}$ of dimension $2\sigma$ has a 
natural quadratic form 
$$
Q_0\colon  px \bmod p{S_X} \mapsto px^2 \bmod p \quad (x\in S_X\dual)
$$
that takes values in $\F_p$.
We denote by $\OG(Q_0)$ the finite group of automorphisms of $(S_0, Q_0)$.
We have a natural homomorphism
$\OG({S_X}) \to \OG(Q_0)$.
We denote by  $\varphi\colon S_0\tensor k\to S_0\tensor k$
the map $\id_{S_0}\tensor F_k$,
where $F_k$ is the Frobenius map of $k$.
Let $c_{\DR}\colon {S_X}\to H^2_{\DR}(X/k)$ denote the Chern class map.
Then the kernel $\Ker (\bar{c}_{\DR})$ 
of the induced homomorphism $\bar{c}_{\DR}\colon {S_X}\tensor k \to H^2_{\DR}(X/k)$
from ${S_X}\tensor k=S_X/p S_X$ to $H^2_{\DR}(X/k)$
is contained in $S_0\tensor k$.
(Note that we have $p S_X\dual \subset S_X$.)
The subspace 
$$
K:=\varphi\inv(\Ker (\bar{c}_{\DR}))
$$
of $S_0\tensor k$ is called the 
\emph{period  of $X$}.
\begin{theorem}[Ogus~\cite{MR563467},~\cite{MR717616}]\label{thm:ssK3aut}
Via the natural action of $\Aut(X)$ on  ${S_X}$, 
the automorphism group $\Aut(X)$ of $X$ is identified with 
$$
\set{g\in \aut(\Nc(X))}{K^g=K}.
$$
\end{theorem}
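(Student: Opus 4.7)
The plan is to establish this as the crystalline analogue of Theorem~\ref{thm:aut}, splitting the problem into three parts: the containment of $\Image \varphi_X$ in the subgroup on the right, the injectivity of $\varphi_X$, and the reverse containment, which is Ogus's strong Torelli theorem. For the easy containment, let $g \in \Aut(X)$. The pullback $g^*$ sends classes of effective curves to classes of effective curves, so it preserves $\Nef(X)$ and its interior $\Nc(X)$, giving $\varphi_X(g) \in \aut(\Nc(X))$. Functoriality of de Rham cohomology and of the Chern class map yields $\bar{c}_{\DR} \circ (\varphi_X(g) \tensor \id_k) = g^*_{\DR} \circ \bar{c}_{\DR}$, so $\varphi_X(g) \tensor \id_k$ preserves $\Ker(\bar{c}_{\DR}) \subset S_0 \tensor k$. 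Since $\varphi_X(g)$ is $\F_p$-linear on $S_0$ it commutes with $\varphi = \id_{S_0} \tensor F_k$, and therefore preserves $K = \varphi\inv(\Ker \bar{c}_{\DR})$ as well.

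For injectivity of $\varphi_X$, suppose $\varphi_X(g) = \id_{S_X}$. Then $g$ fixes every ample class of $X$, so $g$ lies in the polarized automorphism group scheme $\Aut(X, h)$ for any ample $h \in S_X$. By Rudakov--Shafarevich, K3 surfaces in odd characteristic carry no global vector fields, so $\Aut(X, h)$ is \'etale; together with its properness over $k$, this makes it a finite constant group. Since $g$ acts trivially on $\mathrm{Pic}(X) = S_X$, the induced action on the projective embedding $X \inj \P H^0(X, \mathcal{O}_X(nh))\dual$ for $n \gg 0$ is trivial, whence $g = \id_X$.

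The main obstacle, and the substantive content of the theorem, is the reverse containment: given $g \in \aut(\Nc(X))$ with $K^g = K$, construct an automorphism of $X$ realizing $g$. This is Ogus's strong Torelli theorem~\cite{MR563467,MR717616}: an isometry $\psi \colon S_X \isom S_{X'}$ between the N\'eron--Severi lattices of two supersingular K3 surfaces over $k$ is induced by a unique isomorphism $X \isom X'$ whenever $\psi$ sends an ample class to an ample class and identifies the crystalline periods inside $S_0 \tensor k$ and $S_0' \tensor k$. Applied with $X' = X$ and $\psi = g$, this furnishes the desired lift. The depth of the result --- which is the real obstacle --- lies in Ogus's construction of the moduli space of marked supersingular K3 surfaces of fixed Artin invariant, the proof that the crystalline period map is essentially an isomorphism onto its period domain, and the deformation-theoretic machinery required to pass from formal comparison isomorphisms to algebraic isomorphisms of schemes, none of which can be circumvented by lattice-theoretic arguments alone.
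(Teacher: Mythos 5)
The paper offers no proof of this statement: it is quoted verbatim as a theorem of Ogus, so the only ``proof'' in the paper is the citation. Your outline is therefore consistent with the paper's treatment, and you correctly isolate the genuinely hard direction --- surjectivity onto $\shortset{g\in\aut(\Nc(X))}{K^g=K}$ --- as the crystalline Torelli theorem, which cannot be rederived here and must be cited. Your easy containment is also fine: $g^*$ preserves the nef cone and the positive cone containing an ample class, hence $\Nc(X)$; and since the matrix of $\varphi_X(g)$ on $S_0=pS_X\dual/pS_X$ has entries in $\F_p$, the induced $k$-linear map commutes with $\id_{S_0}\tensor F_k$ and preserves $K=\varphi\inv(\Ker\bar c_{\DR})$ by functoriality of the de Rham Chern class.

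There is, however, a genuine gap in your injectivity argument. From $\varphi_X(g)=\id_{S_X}$ you correctly get that $g$ lies in a finite polarized automorphism group $\Aut(X,h)$, but the final step --- ``since $g$ acts trivially on $\mathrm{Pic}(X)$, the induced action on the projective embedding $X\inj\P H^0(X,\mathcal O_X(nh))\dual$ is trivial, whence $g=\id_X$'' --- is a non sequitur. Acting trivially on $\mathrm{Pic}(X)$ only says $g^*\mathcal O(nh)\cong\mathcal O(nh)$; this induces a projective-linear action on $\P H^0(X,\mathcal O(nh))$ compatible with the embedding, but that action has no reason to be trivial (any projective automorphism of a quartic $K3$ in $\P^3$ preserves the hyperplane class, yet is usually nontrivial). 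The correct route to injectivity in the supersingular case is again Ogus: since $\rho_X=22$, an isometry of $S_X$ together with the period determines the full crystalline data, and the uniqueness clause of the crystalline Torelli theorem (an isomorphism inducing a given period-and-ample-cone-preserving isometry is unique) forces $g=\id_X$. So injectivity, like surjectivity, should be charged to the cited theorem rather than to the projective-embedding argument as written.
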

Since $\OG(Q_0)$ is finite,
the subgroup
\begin{equation}\label{eq:GXss}
G_X:=\set{g\in \OG^+({S_X})}{K^g=K}
\end{equation}
of $\OG^+({S_X})$
has  finite index.
\begin{corollary}
The natural homomorphism $\Aut(X)\to \OG(S_X)$
is injective, and its 
 image  is equal to $\aut_{G_X}(\Nc(X))$.
 \end{corollary}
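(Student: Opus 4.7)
The plan is to derive this corollary essentially as a direct translation of Theorem~\ref{thm:ssK3aut} into the language of the subgroup $G_X$ just defined. Because the theorem already gives a set-theoretic identification of $\Aut(X)$ with a subset of $\aut(\Nc(X))$, both injectivity and the description of the image come out together once we unravel the definitions.

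For injectivity, I would simply observe that Theorem~\ref{thm:ssK3aut} realizes $\Aut(X)$ as a subgroup of $\aut(\Nc(X))$, which by definition is a subgroup of $\OG^+(S_X)\subset \OG(S_X)$. Under this realization, the natural homomorphism $\varphi_X\colon \Aut(X)\to \OG(S_X)$ is nothing but the inclusion, so it is injective.

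For the image, I would combine the two defining conditions. By Theorem~\ref{thm:ssK3aut},
\[
\Image\varphi_X=\set{g\in \aut(\Nc(X))}{K^g=K}.
\]
Since any $g$ preserving $\Nc(X)$ automatically preserves $\PPP(X)$ (as $\Nc(X)$ has non-empty interior in $\PPP(X)$), we have $\aut(\Nc(X))\subset \OG^+(S_X)$. Therefore the condition $g\in \aut(\Nc(X))$ combined with $K^g=K$ is exactly the condition that $g\in G_X$ and $\Nc(X)^g=\Nc(X)$, which by definition means $g\in \aut_{G_X}(\Nc(X))$. Conversely, every $g\in \aut_{G_X}(\Nc(X))$ is in $\aut(\Nc(X))$ and satisfies $K^g=K$, so Theorem~\ref{thm:ssK3aut} places it in $\Image\varphi_X$.

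There is no real obstacle here; the only thing to be slightly careful about is verifying $\aut(\Nc(X))\subset \OG^+(S_X)$, which is immediate from the fact that $\Nc(X)$ lies in the single positive cone $\PPP(X)$. The whole argument is a one-paragraph consequence of Theorem~\ref{thm:ssK3aut} and the definitions~\eqref{eq:GXss} of $G_X$ and of $\aut_{G_X}(\Nc(X))$.
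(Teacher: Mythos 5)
Your proof is correct and is essentially the argument the paper intends: the corollary is an immediate unwinding of Theorem~7.4 together with the definitions of $G_X$ in~\eqref{eq:GXss} and of $\aut_{G_X}(\Nc(X))$ (and indeed $\aut(\Nc(X))$ is \emph{defined} in the paper as $\aut_{\OG^+(S_X)}(\Nc(X))$, so the containment $\aut(\Nc(X))\subset\OG^+(S_X)$ you verify holds by definition). Nothing further is needed.
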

\section{Geometric application of Algorithm~\ref{algo:main}}\label{sec:geometricapplication}
Let $X$ be a complex algebraic $K3$ surface or a supersingular $K3$ surface in
odd characteristic.
Let $S_X$ be the N\'eron-Severi lattice of $X$,
and let $\Nc(X)$ be the $\Roots_{S_X}\sphyp$-chamber $\Nef(X)\cap \PPP(X)$
in the positive cone $\PPP(X)$ containing an ample class.
Let $G_X$ be the subgroup of $\OG^+(S_X)$ defined by~\eqref{eq:GXcomplex} or~\eqref{eq:GXss}.
Applying Algorithm~\ref{algo:main}
to the case $S=S_X$, $G=G_X$ and $\Nc=\Nc(X)$, 
we can calculate a finite set of generators of 
$$
\Image(\varphi_X\colon \Aut(X)\to \OG(S_X))=\aut_{G_X}(N (X))
$$
and 
a closed domain $F_{\Nc(X)}$ of $\Nc(X)$ with the properties given in Proposition~\ref{prop:adj}, 
provided that the following hold:
\begin{itemize}
\item[(1)] 
The subgroup $G_X$ of $\OG^+(S_X)$ satisfies  
the condition $\propVG$ of the existence of a membership algorithm 
in Section~\ref{sec:chamber}.
\item [(2)]
We can find a primitive embedding of  $S_X$
into an even unimodular  hyperbolic lattice $\L$ of rank $10$, $18$ or $26$
such that the orthogonal complement $R$ of $S_X$ in $\L$ 
satisfies $\assumptwo$ and~$\assumpthree$  in Section~\ref{sec:Borcherds}. 
\item[(3)]
 We can find a Weyl vector of an $\RtsLS\sphyp$-chamber $D_0$
contained in the $\Roots_{S}\sphyp$-chamber $\Nc(X)$.
\end{itemize}
We discuss  these requirements for a complex $K3$ surface $X$.
Let $\rho_X$ denote the Picard number of $X$,
and let $T_X$ denote the transcendental lattice of $X$.
\par
%
\subsection{Requirement (1).}\label{subsec:req1}
By the definition~\eqref{eq:GXcomplex} of $G_X$,
we have a membership algorithm for the subgroup $G_X$ of $\OG^+(S_X)$
if we have explicit descriptions of 
the isomorphism $\delta_{ST}\colon (A_{S_X}, q_{S_X})\isom (A_{T_X}, -q_{T_X})$
induced by the even unimodular overlattice $H^2(X, \Z)$ of $S_X\oplus T_X$,
the homomorphisms 
$\eta_{S_X}\colon \OG(S_X)\to \OG(q_{S_X})$, $\eta_{T_X}\colon \OG(T_X)\to \OG(q_{T_X})$,
and the subgroup $C_X$ of $\OG(T_X)$.
\par
Suppose that $C_X=\{\pm 1\}$.
Then the condition $g\in G_X$ is reduced to the condition $\eta_{S_X}(g)\in\{\pm 1\}$,
and hence all we need to do is to calculate 
the homomorphism $\eta_{S_X}$.
This assumption $C_X=\{\pm 1\}$
holds in many cases.
For example, if  $\rho_X<20$
and the period $\omega_X$ is generic in $T_X\tensor \C$,  then $C_X=\{\pm 1\}$.
Indeed, since the eigenspaces in  $T_X\tensor\C$  of any $g\in \OG(T_X)\setminus \{\pm 1\}$ are proper subspaces,
 a period $\omega_X$ for which $C_X\ne \{\pm 1\}$ must lie in a countable union of proper subspaces of $T_X\tensor\C$.
\subsection{Requirement (2).}\label{subsec:req2}
We have the following:
\begin{proposition}\label{prop:embeddable}
For a complex algebraic $K3$ surface $X$,
the lattice $S_X$  has a primitive embedding into an even unimodular  hyperbolic lattice
$\L_{26}$
of rank $26$.
\end{proposition}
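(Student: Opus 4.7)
The plan is to invoke Nikulin's theory of discriminant forms~\cite{MR525944}. By Nikulin's criterion for primitive embeddings into even unimodular lattices (Theorem 1.12.2 of loc.\ cit.), the existence of a primitive embedding $S_X\hookrightarrow \L_{26}$ is equivalent to the existence of an even negative-definite lattice $M$ of rank $26-\rho_X$ with discriminant form $-q_{S_X}$, where $\rho_X:=\rank S_X\le 20$. So the problem is reduced to producing such an $M$.

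To produce $M$, I will apply Nikulin's existence theorem for even lattices (Theorem 1.10.1 of loc.\ cit.) with prescribed signature $(0,\,26-\rho_X)$ and prescribed discriminant form $-q_{S_X}$. Three conditions must be verified. First, the parity condition $-(26-\rho_X)\equiv \mathrm{sign}(-q_{S_X})\pmod 8$ reduces, via Milgram's formula $\mathrm{sign}(q_{S_X})\equiv 2-\rho_X\pmod 8$, to $-24\equiv 0\pmod 8$, which is trivially true. Second, the rank inequality $26-\rho_X\ge \ell(A_{S_X})$ must hold. Third, the $p$-adic local obstructions at each prime must vanish; by Nikulin's Corollary 1.10.2, they do so automatically whenever the second inequality is strict at each prime.

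The strict rank inequality comes from an algebro-geometric input: since $X$ is complex, the primitive embedding $S_X\hookrightarrow H^2(X,\Z)=\mathrm{II}_{3,19}$ produced by the cup product yields the transcendental lattice $T_X$ of rank $22-\rho_X$, and by Nikulin's Proposition 1.6.1 this $T_X$ has discriminant form $-q_{S_X}$. Therefore
$$
\ell(A_{S_X})=\ell(A_{T_X})\le \rank T_X = 22-\rho_X < 26-\rho_X,
$$
and the same bound transfers to $\ell((A_{S_X})_p)$ for every prime $p$. This completes the verification of all three conditions, Nikulin's existence theorem supplies $M$, and Nikulin's primitive-embedding criterion then furnishes the desired primitive embedding $S_X\hookrightarrow \L_{26}$.

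The only delicate ingredient is the $p$-adic local obstruction in Nikulin's existence theorem; but the $4$-unit slack between $\rank T_X$ and the desired rank $26-\rho_X$ renders these obstructions vacuous at every prime, so the argument is uniform in $\rho_X$ and requires no case analysis.
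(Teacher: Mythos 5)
Your argument is correct and is essentially the paper's proof: both reduce the statement to the existence of an even negative-definite lattice of rank $26-\rho_X$ with discriminant form $-q_{S_X}$, obtain it from Nikulin's existence theorem (Theorem 1.10.1 of \cite{MR525944}) using the transcendental lattice $T_X$ as the input guaranteeing the hypotheses, and conclude with Nikulin's primitive-embedding results. The only cosmetic difference is that you verify the signature, rank, and $p$-adic conditions of Theorem 1.10.1 explicitly, whereas the paper transfers them directly from the existence of $T_X$ (and invokes Proposition 1.6.1 plus uniqueness of $\mathrm{II}_{1,25}$ rather than Theorem 1.12.2).
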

\begin{proof}
Recall that  $T_X$ is  an even lattice of signature $(2, 20-\rho_X)$
such that $(A_{T_X}, q_{T_X})$ is isomorphic to $(A_{S_X}, -q_{S_X})$.
By  Nikulin~\cite[Theorem~1.10.1]{MR525944},
the existence of the lattice $T_X$ implies 
the existence  of an even lattice $R$ of signature $(0, 26-\rho_X)$
with $(A_R, q_R)\cong (A_{T_X}, q_{T_X})$.
Hence,
by  Nikulin~\cite[Proposition~1.6.1]{MR525944},
$S_X$ can be embedded primitively into an even unimodular hyperbolic lattice 
of rank $26$ 
with $R$ being the orthogonal complement.
Since 
an even unimodular hyperbolic lattice 
of rank $26$ is unique up to isomorphism,
Proposition~\ref{prop:embeddable} follows.
\end{proof}
 Therefore $S_X$ always satisfies the condition
$\assumpone$ in Section~\ref{sec:Borcherds}.
It is, however,  difficult in general
to obtain  a primitive embedding $S_X\inj \L$ explicitly.
In fact, by Nikulin~\cite[Proposition~1.6.1]{MR525944},
this problem is equivalent to construct a negative-definite lattice $R$ of
rank equal to $\rank\L-\rank S_X$ such that
$(A_R, q_R)\cong (A_{S_X}, -q_{S_X})$ holds.
For the algorithm to construct an integral lattice in a given genus explicitly,
see Conway and Sloane~\cite[Chapter 15]{MR1662447}.
\begin{remark}
In the case where $X$ is supersingular,
we can use the table~\cite{MR1118842} of  positive-definite integral lattices of rank $4$.
\end{remark}
Once a primitive embedding $S_X\inj \L$ is obtained,
we can calculate a Gram matrix of $R$ of the orthogonal complement,
the isomorphism $\delta_{\L}\colon (A_{S_X}, q_{S_X})\isom (A_R, -q_R)$ of discriminant forms induced by $\L\subset S_X\dual\oplus R\dual$,
and the induced isomorphism $\delta_{\L*}\colon \OG(q_{S_X})\isom \OG(q_{R})$.
Since both $R$ and $\Lambda$ are negative-definite,
we can enumerate all embeddings of $R$ into $\Lambda$,
and hence the condition~$\assumptwo$ can be checked.
In practice, ~$\assumptwo$ is often verified simply by showing that $\Roots_{R}\ne \emptyset$.
Note that $\OG(R)$ is a finite group,
and hence its image  by $\eta_R\colon \OG(R)\to \OG(q_R)$ can be  calculated.
By the definition of $G_X$, the subgroup  $\eta_{S_X}(G_X)$ of $\OG(q_{S_X})$
is equal to
$$
\Image \eta_{S_X}\cap \delta_{ST*}\inv (\eta_{T_X}(C_X)).
$$
Hence, if $\delta_{\L*}\inv ( \Image \eta_{R})$ contains $\delta_{ST*}\inv (\eta_{T_X}(C_X))$,
then  the liftability condition~$\assumpthree$ is satisfied.
This sufficient condition for $\assumpthree$ is fulfilled
in the following cases that occur frequently:
the case where  $\eta_{R}$ is surjective,
or the case where $C_X=\{\pm 1\}$.
\subsection{Requirement (3).}\label{subsec:req3}
In order to find an initial $\RtsLS\sphyp$-chamber $D_0$
contained in $\Nc(X)$, 
we can employ one of the following two methods. 
\par
Let $\DDD_0$
be the $\Roots_{\L}\sphyp$-chamber
corresponding to 
the standard Weyl vector $w_0$ given in Section~\ref{sec:Conway}.
We choose an interior point $u_0$ of $\DDD_0$.
When  $\rank \L$ is $10$ or $18$,
$w_0$ is an interior point of $\DDD_0$.
When  $\rank \L$ is $26$ and $w_0=f_U$,
we can use
$$
u_0:=3f_U+z_U \in \L=U\oplus \Lambda,
$$
because we have $u_0^2=4$, $\intM{u_0, w_0}{\L}=1$ and 
$\intM{u_0, r_{\lambda}}{\L}=1-\lambda^2/2>0$ for all $\lambda\in \Lambda$.
\par
Next we find an ample class $a_0\in S_X$.
The method of this step depends, of course,
on what kind of geometric information of $X$ is available.
Once an ample class $a_0$ is obtained,
 we can determine whether a given vector $v\in S_X$
 is ample or not by means of the following criterion and Algorithms~\ref{algo:QLcab} and~\ref{algo:isnef}.
 Note that a vector of $S_X$ is ample if and only if it  belongs to the interior of $N(X)$.
 Therefore
a vector $v\in S_X$ is ample if and only if
 \begin{itemize}
 \item[(i)] $v^2>0$ and $\intM{v, a_0}{}>0$, so that $v\in \PPP(X)$,
 \item[(ii)] the set $\shortset{r\in \Roots_{S_X}}{\intM{v, r}{}=0}$ is empty, and
  \item[(ii)] the set $\shortset{r\in \Roots_{S_X}}{\intM{v, r}{}<0,\, \intM{a_0, r}{}>0}$ is empty, so that the line segment in $\PPP(X)$ connecting $a_0$ and $v$
  does not intersect any hyperplane $(r)\sperp$ perpendicular to some $(-2)$-vector $r$.
 \end{itemize}
 For example,
 we can produce many ample classes $Aa_0+u$
 by putting $A$ to be  sufficiently large integers
 and $u\in S_X$ to be  vectors with relatively small coordinates.
 \par
We choose a general  ample class $a\in S_X$.
(The cases where $a$ is not general enough so that  we have to re-choose $a$ are indicated below.)
Let $i_0\colon S_X\inj \L$ be  a primitive embedding.
We consider the oriented line segment
$$
\ell(t):=(1-t)\ i_0(a)+t\ u_0\quad ( 0\le t\le 1)
 $$
 in $\PPP_{\L}$
 from $i_0(a)$ to $u_0$.
By Algorithm~\ref{algo:isnef}, 
we calculate the finite set
$$
\set{r\in \Roots_{\L}}{\intM{u_0, r}{\L}>0,\;\; \intM{i_0(a), r}{\L}<0}=\{r_1, \dots, r_N\}
$$
and sort the elements of this set in such a way that
$$
t_1\le t_2<\dots\le t_N, \quad\textrm{where $t_i$ satisfies $\intM{\ell(t_i), r_i}{\L}=0$}.
$$
If  $t_1, \dots, t_N$ are not distinct, 
 we re-choose $a$.
 We assume that $t_1, \dots, t_N$ are distinct.
Then the oriented line segment $\ell$ intersects 
the hyperplanes 
$(r_1)\sperp$, \dots, $(r_N)\sperp$ in this order.
We replace the embedding $i_0$ by
$$
i:=s_{r_N}\circ \cdots \circ s_{r_1}\circ i_0,
$$
and consider $S_X$ as a primitive sublattice of $\L$ by $i$.
Then the ample class $a$ is contained in $\DDD_0\cap \PPP(X)$.
We check  whether the inequalities~\eqref{eq:ineqSnondeg} in Criterion~\ref{criterion:Snondeg} 
are satisfied for $v=a$ and $w=w_0$.
If not,
then we re-choose $a$ and repeat the process  again.
If~\eqref{eq:ineqSnondeg} is satisfied, 
then $D_0:=\DDD_0\cap \PPP(X)$ is an $\RtsLS\sphyp$-chamber
containing $a$ in its interior.
In particular, $D_0$ is contained in $\Nc(X)$ and 
the standard Weyl vector $w_0$ is a Weyl vector of $D_0$.
\par
The other method does not necessarily succeed,
but  is practically useful.
In fact, we have used this method in~\cite{MR3286672} and~\cite{KondoShimada}.
Suppose that $\rank\L=26$.
We choose an ample class $a\in S_X$
that is primitive in $S_X$.
By Algorithm~\ref{algo:QLc},
we can calculate the list of all vectors $v\in R$
such that $a^2+v^2=0$.
For each $v$ in this list,
we determine whether the vector $w:=a+v\in S_X\oplus R$ of $\L$
is a Weyl vector or not by Theorem~\ref{thm:ConwaySloane};
we check whether $\gen{w}\sperp/\gen{w}$
is a negative-definite  unimodular lattice with no $(-2)$-vectors  by applying Algorithm~\ref{algo:QLc}
to a Gram matrix of $\gen{w}\sperp/\gen{w}$.
Suppose that a Weyl vector $w_0:=a+v_0$ of this form is found,
and let $\DDD_0$ be the corresponding $\Roots_{\L}\sphyp$-chamber.
We check  whether the inequalities~\eqref{eq:ineqSnondeg} in Criterion~\ref{criterion:Snondeg} 
are satisfied for $v=a$ and $w=w_0$.
If~\eqref{eq:ineqSnondeg} is satisfied, 
then $D_0:=\DDD_0\cap \PPP(X)$ is an $\RtsLS\sphyp$-chamber
containing $a$ in its interior.
In particular, $D_0$ is contained in $\Nc(X)$ and 
the  Weyl vector $w_0$ is a Weyl vector of $D_0$.
\section{Complex elliptic $K3$ surfaces with Picard number $3$}\label{sec:rho3}
We demonstrate  Algorithm~\ref{algo:main} on certain complex $K3$ surfaces with Picard number $3$,
because, 
in this case,
we can draw  pictures of the closed domain $F_{\Nc(X)}$  defined by~\eqref{eq:FN} in the hyperbolic plane.
\par
Let $X$ be a complex $K3$ surface with Picard number $3$
and with 
a Jacobian fibration
$$
\phi\colon X\to\P^1
$$
whose Mordell-Weil group $\MW_{\phi}$ is of rank $1$.
We assume  that 
the period $\omega_X$ of $X$ is generic in $T_X\tensor \C$,
which implies that the group $C_X$ defined by~\eqref{eq:CX}
is equal to $\{\pm 1\}$.
We denote by $f_{\phi}\in S_X$ the class of a fiber of $\phi$ and 
by $z_{\phi}\in S_X$ the class of the zero section of $\phi$. 
Then there exists a vector $v_3\in S_X$,
unique up to sign,  such that
$f_{\phi}, z_{\phi}, v_3$ form a basis of $S_X$,
and that the Gram matrix of $S_X$  with respect to $f_{\phi}, z_{\phi}, v_3$ is
$$
M:=\left[
\begin{array}{ccc}
0 & 1 & 0 \\
1 & -2 & 0 \\
0 & 0 & -2k
\end{array}
\right],
$$
where $k:=-v_3^2/2$. 
Since  $\MW_{\phi}$ is of rank $1$,
there exist no reducible fibers of $\phi$ and hence we have $k>1$~(see, for example,~\cite{MR1081832}).
A vector $\xi f_\phi +\eta z_\phi +\zeta v_3$ of $S_X\tensor\R$ is written as $[\xi, \eta, \zeta]$.
The discriminant group $A_{S_X}$ of $S_X$ is 
a cyclic group of order $2k$ generated by
$u_3 \bmod S_X$, where 
$u_3:=[0,0,-1/2k]$.
Since $k>1$, we have $-1\notin \Ker (\eta_{T_X})$.
Therefore, by Corollary~\ref{cor:KerIm} and the assumption $C_X=\{\pm 1\}$,
the natural homomorphism $\Aut(X)\to \OG^+(S_X)$ is injective,
and $\Aut(X)\cong \aut_{G_X}(\Nc(X))$ holds,
where  $G_X$ is defined by~\eqref{eq:GXcomplex}.
We describe the group $G_X$ in terms of matrices.
Note that the class $a:=[2,1,0]\in S_X$ is nef,
and hence $g\in \OG(S_X)$ belongs to $\OG^+(S_X)$ if and only if 
$\intM{a^g, a}{S_X}>0$.
Therefore $G_X$ is canonically identified with 
$$
\set{g\in \GL_3(\Z)}{ g\,M\,{}^t g=M, \;\; a\, g\, M\, {}^ta >0, \;\;  u_3 g\equiv \pm u_3\bmod \Z^3}.
$$
This identification provides us with 
the membership algorithm for $G_X$ in the condition~$\propVG$ in Section~\ref{sec:chamber}.
%
\par
We embed $S_X$ into the even unimodular hyperbolic lattice $\L=U\oplus E_8$ of rank $10$ primitively.
Note that $\assumptwo$ is irrelevant in this case,
and $\assumpthree$ holds because $C_X=\{\pm 1\}$.
Let $\DDD_0$ be  the $\Roots_{\L}\sphyp$-chamber  
with the Weyl vector $w_0\in \L=U\oplus E_8$  given in Section~\ref{subsec:10}.
We will find 
a primitive embedding $i\colon S_X\inj \L$  such that
$D_0:=i\inv(\DDD_0)$ is an $\Roots_{\L|S_X}\sphyp$-chamber 
contained in $\Nc(X)$
by the method described in Section~\ref{subsec:req3}.
To find an initial  primitive embedding $i_0$,
it is enough to 
choose a primitive vector $v_3\sprime\in E_8$ such that
$v_3\sp{\prime 2}=-2k$,  and define  $i_0\colon S_X\inj \L$ 
 by
$$
i_0(f_\phi)=f_U, \quad
i_0(z_\phi)=z_U, \quad
i_0(v_3)=v_3\sprime,
$$
where $f_U$ and $z_U$ are the basis of the hyperbolic plane $U$ 
with respect to which the Gram matrix is~\eqref{eq:nonstandardU}. 
As the general ample class $a$, 
we use $[3A, A, -1]$ with $A$ large enough.
%
%
\par
There exist two obvious elements in $\Aut(X)$.
We have a canonical bijection 
$$
\MW_{\phi}\;\;\cong\;\;\set{[k s^2, 1, s]}{s\in\Z}
$$
by sending  sections of $\phi$ to the classes of their images.
The  Mordell-Weil group $\MW_{\phi}\cong \Z$ acts on $X$ as translations.
We also have the inversion automorphism $\iota_X\colon X\to X$
induced by the multiplication by $-1$
on the generic fiber of $\phi$.
Therefore $\Aut(X)$ contains $\MW_{\phi}\semidirectproduct\gen{\iota_X}\cong \Z/2\Z * \Z/2\Z$,
which is generated by the two involutions 
$$
h_1 :=\iota_X= \left[
\begin{array}{ccc}
1 & 0 & 0 \\
0 & 1 & 0\\
0 & 0 & -1
\end{array}\right],
\quad
h_2 := \left[
\begin{array}{ccc}
1 & 0 & 0 \\
k & 1 & -1\\
2k & 0 & -1
\end{array}\right].
$$
%
\par
%
To describe  the closed domain $F_{\Nc(X)}$
satisfying the properties given in Proposition~\ref{prop:adj}, 
we use the following method.
The norm of $[1,x,y]\in S_X\tensor\R$ is $2x-2x^2-2ky^2$.
Hence, by the map $[1,x,y]\mapsto (x, y)$,
the hyperbolic plane  $\H_{S_X}$ associated with $S_X$
is identified with
$$
H_X:=\set{(x, y)\in \R^2}{(x-1/2)^2+(\sqrt{k} y)^2<1/4}.
$$
The vector $f_{\phi}$  corresponds to the point  $(0, 0)$ of $\closure{H}_X$,
and the hyperplane $(z_{\phi})\sperp$ is given by $x=1/2$.
The Poincar\'e  disk model of the hyperbolic plane  $\H_{S_X}$ is given by  the mapping
\begin{equation}\label{eq:PoincareHX}
(x, y)\mapsto\frac{1-2\,x}{1+\sqrt{2}\,r}\;+\; \sqrt{-1} \frac{2\,\sqrt{k}\, y}{1+\sqrt{2}\,r}, \quad \textrm{where}\;\; r:=\sqrt{2x-2x^2-2ky^2}
\end{equation}
from $H_X$ to $\varDelta:=\shortset{z\in \C}{|z|< 1}$.
This map sends the boundary point corresponding to $f_{\phi}$ to $1\in\bar{\varDelta}$,
and the hyperplane $(z_{\phi})\sperp$ to the imaginary axis in $\varDelta$.
\begin{figure}
\begin{center}
\setlength{\unitlength}{.6truecm}
\begin{picture}(14, 13.3)
\put(1,7){\vector(1,0){12}}
\put(3,1.8){\vector(0,1){10.4}}
\put(7,3){\line(0,1){8}}
\multiput(3,11)(.5,0){8}{\line(1,0){.3}}
\multiput(3,3)(.5,0){8}{\line(1,0){.3}}
\put(3,7){\circle*{.2}}
\put(13.5,6.9){$x$}
\put(2.9, 12.8){$y$}
\put(2.0, 7.5){$f_{\phi}$}
\put(1.5, 11){$\frac{1}{2\sqrt{k}}$}
\put(1.0, 3){$-\frac{1}{2\sqrt{k}}$}
\put(7.2,7.4){$\frac{1}{2}$}
\put(7.2,9.2){$(z_{\phi})\sperp$}
\put(7,7){\circle{8}} 
\end{picture}
\end{center}
\vskip -1.2cm
\caption{$H_X$}
\end{figure}
\par
%
%
%
%
%
%
%
\begin{table}
\begin{center}
{\tiny 
$$
\renewcommand{\arraycolsep}{4pt}
\begin{array}{lll}
\begin{array}{lll}
D_{0} &(1,-2,0) &\textrm{$(-2)$-wall}\\ 
   &(0,1,6) &D_{1}\\ 
   &(0,0,-1) &\cong D_{0}\;\textrm{by}\; h_{1}
\end{array}
&
\begin{array}{lll}
D_{1} &(0,1,5) &D_{3}\\ 
   &(0,-1,-6) &\textrm{$D_{0}$}\\ 
   &(1,-1,6) &D_{2}
\end{array}
&
\begin{array}{lll}
D_{2} &(1,-2,0) &\textrm{$(-2)$-wall}\\ 
   &(-1,1,-6) &\textrm{$D_{1}$}\\ 
   &(0,1,5) &D_{4}
\end{array}
\\ 
\hline
\begin{array}{lll}
D_{3} &(0,-1,-5) &\textrm{$D_{1}$}\\ 
   &(1,-1,6) &D_{4}\\ 
   &(0,1,4) &D_{5}
\end{array}
&
\begin{array}{lll}
D_{4} &(1,-1,5) &D_{6}\\ 
   &(0,1,4) &D_{7}\\ 
   &(-1,1,-6) &D_{3}\\ 
   &(0,-1,-5) &\textrm{$D_{2}$}
\end{array}
&
\begin{array}{lll}
D_{5} &(0,-1,-4) &\textrm{$D_{3}$}\\ 
   &(1,-1,6) &D_{7}\\ 
   &(0,1,3) &D_{8}
\end{array}
\\ 
\hline
\begin{array}{lll}
D_{6} &(1,-2,0) &\textrm{$(-2)$-wall}\\ 
   &(-1,1,-5) &\textrm{$D_{4}$}\\ 
   &(0,1,4) &D_{9}
\end{array}
&
\begin{array}{lll}
D_{7} &(1,-1,5) &D_{9}\\ 
   &(0,1,3) &D_{10}\\ 
   &(-1,1,-6) &D_{5}\\ 
   &(0,-1,-4) &\textrm{$D_{4}$}
\end{array}
&
\begin{array}{lll}
D_{8} &(0,-1,-3) &\textrm{$D_{5}$}\\ 
   &(1,-1,6) &D_{10}\\ 
   &(0,2,5) &D_{11}
\end{array}
\\ 
\hline
\begin{array}{lll}
D_{9} &(0,-1,-4) &\textrm{$D_{6}$}\\ 
   &(-1,1,-5) &D_{7}\\ 
   &(1,0,8) &D_{12}
\end{array}
&
\begin{array}{lll}
D_{10} &(0,-1,-3) &\textrm{$D_{7}$}\\ 
   &(1,1,11) &D_{13}\\ 
   &(-1,1,-6) &D_{8}
\end{array}
&
\begin{array}{lll}
D_{11} &(0,-2,-5) &\textrm{$D_{8}$}\\ 
   &(1,1,11) &D_{14}\\ 
   &(0,1,2) &\cong D_{11}\;\textrm{by}\; h_{2}
\end{array}
\\ 
\hline
\begin{array}{lll}
D_{12} &(1,-1,4) &D_{16}\\ 
   &(1,1,11) &D_{15}\\ 
   &(-1,0,-8) &\textrm{$D_{9}$}
\end{array}
&
\begin{array}{lll}
D_{13} &(1,0,8) &D_{18}\\ 
   &(-1,-1,-11) &\textrm{$D_{10}$}\\ 
   &(0,2,5) &D_{17}
\end{array}
&
\begin{array}{lll}
D_{14} &(0,1,2) &\cong D_{14}\;\textrm{by}\; h_{2}\\ 
   &(-1,-1,-11) &\textrm{$D_{11}$}\\ 
   &(1,-1,6) &D_{17}
\end{array}
\\ 
\hline
\begin{array}{lll}
D_{15} &(1,-1,4) &D_{19}\\ 
   &(-1,-1,-11) &\textrm{$D_{12}$}\\ 
   &(0,1,3) &D_{20}
\end{array}
&
\begin{array}{lll}
D_{16} &(1,-2,0) &\textrm{$(-2)$-wall}\\ 
   &(1,1,11) &D_{19}\\ 
   &(-1,1,-4) &\textrm{$D_{12}$}
\end{array}
&
\begin{array}{lll}
D_{17} &(0,-2,-5) &\textrm{$D_{13}$}\\ 
   &(-1,1,-6) &D_{14}\\ 
   &(1,0,8) &D_{21}
\end{array}
\\ 
\hline
\begin{array}{lll}
D_{18} &(1,-1,5) &D_{20}\\ 
   &(0,2,5) &D_{21}\\ 
   &(-1,0,-8) &\textrm{$D_{13}$}
\end{array}
&
\begin{array}{lll}
D_{19} &(-1,1,-4) &\textrm{$D_{15}$}\\ 
   &(-1,-1,-11) &D_{16}\\ 
   &(2,-1,11) &D_{22}
\end{array}
&
\begin{array}{lll}
D_{20} &(0,-1,-3) &\textrm{$D_{15}$}\\ 
   &(-1,1,-5) &D_{18}\\ 
   &(1,1,10) &D_{23}
\end{array}
\\ 
\hline
\begin{array}{lll}
D_{21} &(0,-2,-5) &D_{18}\\ 
   &(-1,0,-8) &\textrm{$D_{17}$}\\ 
   &(1,1,10) &D_{24}
\end{array}
&
\begin{array}{lll}
D_{22} &(1,-2,0) &\textrm{$(-2)$-wall}\\ 
   &(-2,1,-11) &\textrm{$D_{19}$}\\ 
   &(1,0,7) &D_{25}
\end{array}
&
\begin{array}{lll}
D_{23} &(-1,-1,-10) &\textrm{$D_{20}$}\\ 
   &(2,1,17) &\cong D_{25}\;\textrm{by}\; h_{3}\\ 
   &(0,2,5) &D_{26}
\end{array}
\\ 
\hline
\begin{array}{lll}
D_{24} &(1,-1,5) &D_{26}\\ 
   &(2,3,22) &\textrm{$(-2)$-wall}\\ 
   &(-1,-1,-10) &\textrm{$D_{21}$}
\end{array}
&
\begin{array}{lll}
D_{25} &(1,-2,0) &\textrm{$(-2)$-wall}\\ 
   &(2,1,17) &\cong D_{23}\;\textrm{by}\; h_{3}\\ 
   &(-1,0,-7) &\textrm{$D_{22}$}
\end{array}
&
\begin{array}{lll}
D_{26} &(0,-2,-5) &\textrm{$D_{23}$}\\ 
   &(2,3,22) &\textrm{$(-2)$-wall}\\ 
   &(-1,1,-5) &D_{24}
\end{array}
\\ 
\hline
\end{array}
$$
 }
\end{center}
\caption{Chambers for the case $-2k=-22$}\label{table:walls22}
\end{table}
\begin{figure}
\begin{center}
\includegraphics[bb=70 140 200 280]{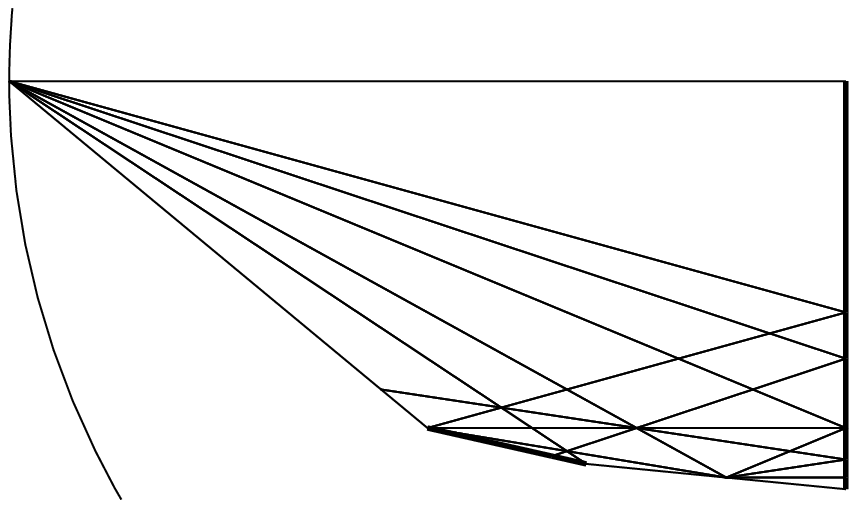}
\end{center}
\caption{Fundamental domain  for the case $-2k=-22$ in  $H_X$}\label{figure:poly22}
\end{figure}
\begin{example}
We present  the result for the case  $-2k=-22$.
Elements of $\L$ is written in terms of the basis $f_U, z_U, e_1, \dots, e_8$.
The primitive  embedding $i\colon S_X\inj \L$ is given by
$$
i(f_\phi)=f_U, \quad
i(z_\phi)=z_U, \quad
i(v_3)=[0, 0, 12, 8, 16, 24, 20, 16, 11, 6].
$$
As the elements of $\Chams$, 
we  obtained twenty-seven $\Roots_{\L|S_X}\sphyp$-chambers $D_0, \dots, D_{26}$.
It turns out that 
$\aut_{G_X}(D_i)=\{1\}$ holds for $i=0, \dots, 26$.
Their walls and adjacency relation are given in Table~\ref{table:walls22},
where $(a,b,c)$ is the wall defined by $a+bx+cy=0$ in $H_X$.
For example,
the chamber $D_{23}$ has three walls 
$\mu_1:-1-x-10y=0$, $\mu_2:2+x+17y=0$ and  $\mu_3:2x+5y=0$.
The chamber adjacent to $D_{23}$ across $\mu_1$ is $D_{20}$,
the chamber adjacent to $D_{23}$ across $\mu_3$ is $D_{26}$,
and 
the chamber $D\sprime$ adjacent to $D_{23}$ across $\mu_2$ is isomorphic to  $D_{25}$ via  
$$
h_{3}:= 
\left[
\begin{array}{ccc}
20 & 9& -3\\
7& 2& -1\\
154& 66& -23
\end{array}
\right].
$$
Hence $\Image \varphi_X$ is generated by 
the three elements $h_1$, $h_2$, $h_3$.
The shape of the union $F:=F_{\Nc(X)}$ of these $D_i$ in $H_X$ 
is  given in Figure~\ref{figure:poly22}.
Since the union $F$ of these $D_i$ has two $(-2)$-walls, 
which is depicted by thick lines in Figure~\ref{figure:poly22},
the set of smooth rational curves on $X$ is decomposed into at most two orbits under the action of $\Aut(X)$. 
The left-hand side of Figure~\ref{figure:Poincare} shows the chambers
$$
F,\;   F^{h_1},\;   F^{h_2},\;   F^{h_3},\;   F^{h_1 h_2},\;   F^{h_2 h_1},\;   F^{h_1 h_3},\;   F^{h_3 h_1},\;   
F^{h_3 h_2},\;    F^{h_2 h_3}
$$
on the Poincar\'e disk model~\eqref{eq:PoincareHX} of $\H_{S_X}$.
The $(-2)$-walls are drawn by thick lines.
\end{example}
\begin{figure}
\begin{center}
\includegraphics[bb=70 140 200 280]{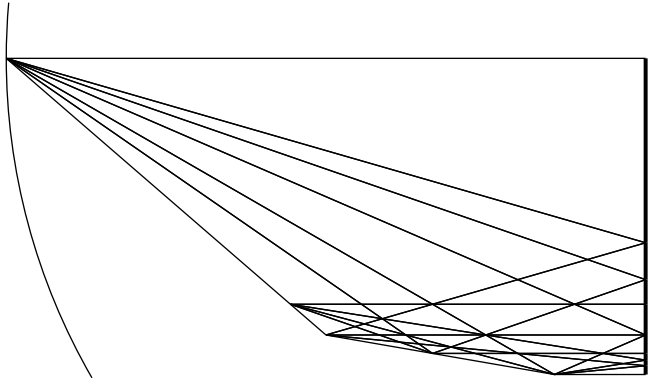}
\end{center}
\caption{Fundamental domain  for the case $-2k=-24$ in $H_X$}\label{figure:poly24}
\end{figure}
\begin{figure}
\begin{center}
\includegraphics[bb=40 0 200 320]{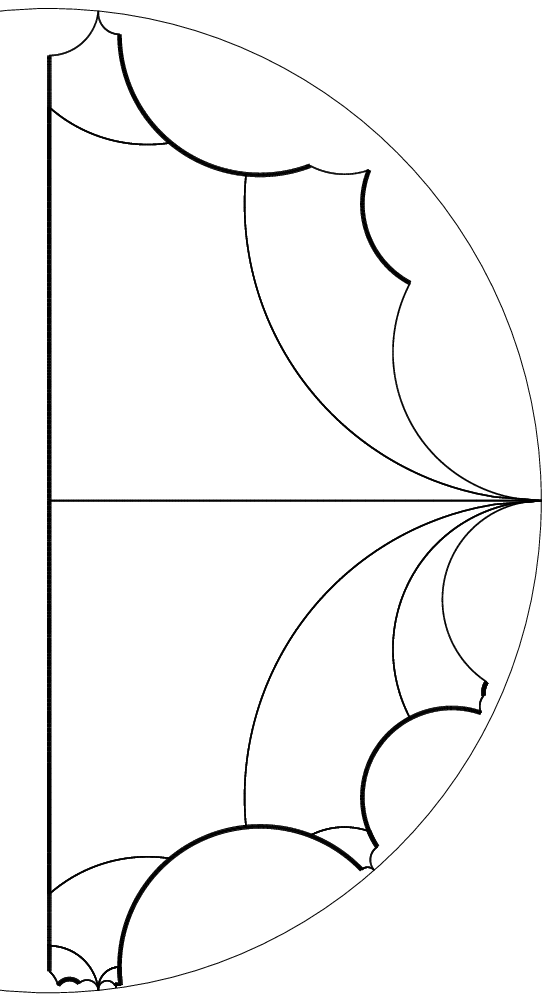}
\includegraphics[bb=-10 0 100 320]{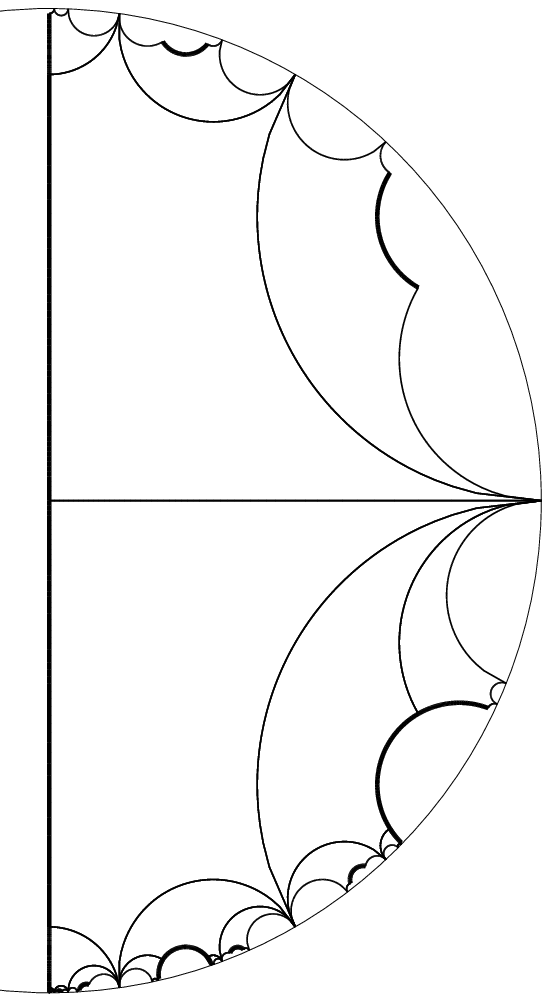}
\end{center}
\caption{Tessellation of $\Nc(X)$   for the cases $-2k=-22$ and $-2k=-24$}\label{figure:Poincare}
\end{figure}
\begin{remark}\label{rem:ExampleIntro}
Example~\ref{example:24} in Introduction is the case $-2k=-24$.
The set $\Chams$ contains $46$ chambers, and 
their union $F:=F_{\Nc(X)}$ is  given in Figure~\ref{figure:poly24}.
Each chamber $D_i$ satisfies $\aut_{G_X}(D_i)=\{1\}$.
Since $F_{\Nc(X)}$ has only one  $(-2)$-wall, $\Aut(X)$ acts on 
the set of smooth rational curves on $X$ transitively. 
Observe that $F:=F_{\Nc(X)}$ has four non-$(-2)$-walls,
which correspond to the four generators $h_1, \dots, h_4$ of $\Aut(X)$ given in Example~\ref{example:24}.
The right-hand side of Figure~\ref{figure:Poincare} shows the chambers
$$
F,\;   F^{h_1},\;   F^{h_2},\;   F^{h_3},\;  F^{h_4},\;   F^{h_i h_j},\; \; (1\le i, j\le 4, i\ne j)
$$
on the Poincar\'e disk model~\eqref{eq:PoincareHX} of $\H_{S_X}$.
\end{remark}
\begin{remark}\label{rem:moreexamples}
We have made the same calculation for $-2k=-4,-6,  \dots, -30$.
The results are presented in the author's web page~\cite{compdataweb}.
\end{remark}
\section{Singular $K3$ surfaces}\label{sec:singK3}
Recall that a $K3$ surface defined over $\C$ is singular if its Picard number attains the possible maximum $20$.
We demonstrate Algorithm~\ref{algo:main} on 
singular $K3$ surfaces $X$ 
such that their transcendental lattices $T_X$ satisfy $\disc T_X\le 16$.
There exist, up to isomorphisms,  exactly eleven singular $K3$ surfaces with $\disc T_X\le 16$.
Table~\ref{table:singK3} shows
Gram matrices 
$$
\left[
\begin{array}{cc}
a & b \\
b & c
\end{array}
\right]
$$
of the transcendental lattices
of these $K3$ surfaces.
We embed $S_X$ into an even unimodular hyperbolic lattice $\L$ of rank $26$.
By the \emph{root-$R$ condition},
we mean 
that there should exist a negative-definite root lattice $R$ of rank $6$ that satisfies
$(A_{R}, q_{R})\cong (A_{T_X}, q_{T_X})$.
If $T_X$ satisfies the root-$R$ condition, 
then there exists a primitive embedding $S_X\inj \L$ such that 
the orthogonal complement is isomorphic to a root lattice $R$,
and hence Borcherds' method terminates quickly~(see Remark~\ref{rem:Rroots}).
The column root-$R$ of Table~\ref{table:singK3} indicates the existence of such a root lattice $R$,
and its $ADE$-type if it exists.
\par
The singular $K3$ surfaces of Nos.~{1, 2, 3, 7 and 11} have been studied previously.
Therefore we treat the other six singular $K3$ surfaces Nos.~{4, 5, 6, 8, 9 and 10}.
%
%
\begin{table}
\renewcommand{\arraystretch}{1.2}
\begin{center}
\begin{tabular}{c|ccccc|c}
No. & $\disc T_X$ & $[a,b,c]$ & root-$R$  & $|\Chams|$  & $|C_X|$&   \\
\hline
1 & $3$ & $[2,1,2]$ & $E_6$ & $1$ &$6$& \textrm{Vinberg~\cite{MR719348}}\\
2 & $4$ & $[2,0,2]$ & $D_6$ & $1$ &$4$& \textrm{Vinberg~\cite{MR719348}}\\
3 & $7$ & $[2,1,4]$ & $A_6$ & $1$ &$2$& \textrm{Ujikawa~\cite{MR3113614}}\\
4 & $8$ & $[2,0,4]$ & $D_5+A_1$ &$1$& $2$ & \textrm{Section~\ref{subsec:easyexamples}} \\
5 & $11$ & $[2,1,6]$ & none & $1098$ &$2$& \textrm{Section~\ref{subsec:216}} \\
6 & $12$ & $[2,0,6]$ & $A_5+A_1$ & $1$ &$2$& \textrm{Section~\ref{subsec:easyexamples}} \\
7 & $12$ & $[4,2,4]$ & $D_4+A_2$ & $1$ &$6$& \textrm{Keum and Kondo~\cite{MR1806732}}\\
8 & $15$ & $[2,1,8]$ & $A_4+A_2$ & $1$ &$2$& \textrm{Section~\ref{subsec:easyexamples}}\\
9 & $15$ & $[4,1,4]$ & none & $2051$ &$2$& \textrm{Section~\ref{subsec:414}} \\
10 & $16$ & $[2,0,8]$ & none & $ 4539$ &$2$& \textrm{Section~\ref{subsec:208}} \\
11 & $16$ & $[4,0,4]$ & $2A_3$ & $1$ &$4$& \textrm{Keum and Kondo~\cite{MR1806732}}\\
\end{tabular}
\end{center}
\vskip .4cm
\caption{Transcendental lattices of small discriminants}\label{table:singK3}
\end{table} 
\subsection{The period and ample classes of $X$}
The period $\omega_X$ is given in $T_X\tensor\C$ as a vector $(1, \alpha)$,
where $\alpha$ is a root of 
$$
a+2bx+cx^2=0.
$$
The choice of a root of this quadratic equation does not matter,
because,
in the eleven cases in Table~\ref{table:singK3},
the orientation reversing of $T_X$ yields an isomorphic oriented lattice;
that is,  the $\GL_2(\Z)$-equivalence class of $T_X$ 
contains only one $\SL_2(\Z)$-equivalence class~(see~\cite{MR1820211}).
Since $\OG(T_X)$ is finite,
we can calculate the subgroup $C_X$ defined by~\eqref{eq:CX}.
It turns out that,
in the six cases Nos.~{4, 5, 6, 8, 9 and 10},
we have $C_X=\{\pm 1\}$.
Therefore the conditions~$\propVG$ and $\assumpthree$
for $G_X$ and $R$ are satisfied (see Sections~\ref{subsec:req1} and~\ref{subsec:req2}).
%
%
%
\par
In the following, we use the general theory of elliptic surfaces,
for which we refer to~\cite{MR1081832} or~\cite{MR1813537}.
Shioda and Inose~\cite{MR0441982} showed that
every singular $K3$ surface $X$ has a Jacobian fibration $\phi\colon X\to\P^1$
with two singular fibers $\phi\inv(p)$ and $\phi\inv(p\sprime)$ of type $II^*$.
Hence $S_X$ contains an even unimodular hyperbolic lattice $\L_{18}(\phi):=U_{\phi}\oplus E_8\oplus E_8$
of rank $18$ as a sublattice, where $U_{\phi}$ is spanned by the class $f_{\phi}$ 
of a fiber of $\phi$ and the class $z_{\phi}$  of the zero section of $\phi$
(and hence the Gram matrix of $U_{\phi}$ with respect the basis $f_{\phi}, z_{\phi}$ is~\eqref{eq:nonstandardU}),
and each copy of $E_8$  is spanned by the classes of  irreducible components of 
a singular fiber of type $II^*$.
Since $\L_{18}(\phi)$ is unimodular,
there exists a negative-definite lattice $T_X^{-}$ of rank $2$ such that
$$
S_X=\L_{18}(\phi)\oplus T_X^{-}.
$$
We have $(A_{T_X^{-}}, q_{T_X^{-}})\cong(A_{T_X}, -q_{T_X})$.
Since the Jacobian fibration $\phi$ is not unique, 
the isomorphism class of $T_X^{-}$ is not unique 
in general (see~\cite{MR2346573}~and~\cite{MR2452829}). 
However, for the eleven cases in Table~\ref{table:singK3},
$T_X^{-}$ is uniquely determined  by the condition $(A_{T_X^{-}}, q_{T_X^{-}})\cong(A_{T_X}, -q_{T_X})$ 
 and is isomorphic to $(-1)T_X$.
%
\par
We search for  rational ample classes $a\in S_X\tensor\Q$.
Let $e_1, \dots, e_8$ (resp.~$e_1\sprime, \dots, e_8\sprime$)
denote the classes of the irreducible components of $\phi\inv(p)$ (resp.~of $\phi\inv(p\sprime)$)
that are disjoint from the zero section
and whose dual graph is
$$
\def\ha{40}
\def\hav{37}
\def\hd{25}
\def\hdv{22}
\def\he{10}
\def\hev{7}
\setlength{\unitlength}{1.5mm}
\centerline{
{\small
\begin{picture}(105, 9)(-20, 7)
\put(22, 16){\circle{1}}
\put(23.5, 15.5){$e\sb 1$}
\put(22, 10.5){\line(0,1){5}}
\put(9.5, \hev){$e\sb 2$}
\put(15.5, \hev){$e\sb 3$}
\put(21.5, \hev){$e\sb 4$}
\put(27.5, \hev){$e\sb 5$}
\put(33.5, \hev){$e\sb 6$}
\put(39.5, \hev){$e\sb 7$}
\put(45.5, \hev){$e\sb {8}$}
\put(51.5, \hev){$e\sb {0}$}
\put(57.5, \hev){$z_\phi$}
\put(10, \he){\circle{1}}
\put(16, \he){\circle{1}}
\put(22, \he){\circle{1}}
\put(28, \he){\circle{1}}
\put(34, \he){\circle{1}}
\put(40, \he){\circle{1}}
\put(46, \he){\circle{1}}
\put(52, \he){\circle{1}}
\put(58, \he){\circle{1}}
\put(10.5, \he){\line(5, 0){5}}
\put(16.5, \he){\line(5, 0){5}}
\put(22.5, \he){\line(5, 0){5}}
\put(28.5, \he){\line(5, 0){5}}
\put(34.5, \he){\line(5, 0){5}}
\put(40.5, \he){\line(5, 0){5}}
\put(46.5, \he){\line(5, 0){5}}
\put(52.5, \he){\line(5, 0){5}}
\put(60, \hev){,}
\end{picture}
}
}
$$
where we denote by $e_0$
(resp.~$e_0\sprime$)
the class of the irreducible component of 
 $\phi\inv(p)$ (resp.~of $\phi\inv(p\sprime)$)
that intersects the zero section.
We have
$$
e_0=f_{\phi}- (3e_1+ 2e_2+ 4e_3 +6e_4 +5e_5 +4e_6 +3 e_7 +2e_8),
$$
and a similar equality for $e_0\sprime$.
\par
Let $e_1\spprime$ and $e_2\spprime$ be a basis of $T_X^-$ such that
$$
\intM{e_1\spprime, e_1\spprime}{}=-a, \;\; \intM{e_1\spprime, e_2\spprime}{}=-b,\;\; \intM{e_2\spprime, e_2\spprime}{}=-c.
$$
Then the set $\Roots_{T_X\sp-}=\shortset{v\in T_X^-}{v^2=-2}$ is equal to
$$
\begin{cases}
\emptyset & \textrm{in No. 9}, \\
\{e_1\spprime, -e_1\spprime\} & \textrm{in Nos.~4, 5, 6, 8, 10}.
\end{cases}
$$
Therefore the set $\shortset{t\in \P^1}{\textrm{$\phi\inv(t)$ is reducible}}$  is equal to 
$$
\begin{cases}
\{ p, p\sprime\} & \textrm{in No. 9}, \\
\{ p, p\sprime, q\}  & \textrm{in Nos.~4, 5, 6, 8, 10, where $q\in \P^1\setminus \{p, p\sprime\}$}.
\end{cases}
$$
Suppose that we are in the case of  Nos.~4, 5, 6, 8 or 10.
Then the reducible fiber $\phi\inv (q)$ is either of type $I_2$ or $III$.
Let $C_0\spprime$ and $C_1\spprime$ be the irreducible components of $\phi\inv (q)$
such that $C_0\spprime$ intersects the zero section.
Changing $e_1\spprime$ and $e_2\spprime$ to $-e_1\spprime$ and $-e_2\spprime$ if necessary,
we can assume that $e_1\spprime$ is the class of $C_1\spprime$,
and hence the class $e_0\spprime$  of $C_0\spprime$ is equal to $f_{\phi}-e_1\spprime$.
\par
We put
$$
\BBB:=\set{[C]}{\textrm{$C$ is a smooth rational curve on $X$ such that $\intM{C, f_{\phi}}{}=0$}}.
$$
Then we have
$$
\BBB=
\begin{cases} 
 \{e_0, e_1, \dots, e_8, e_0\sprime, e_1\sprime, \dots, e_8\sprime\} & \textrm{in No. 9}, \\
 \{e_0, e_1, \dots, e_8, e_0\sprime, e_1\sprime, \dots, e_8\sprime, e_0\spprime, e_1\spprime\} & \textrm{in Nos.~4, 5, 6, 8, 10}.
\end{cases}
$$
Let $e_1\dual, \dots, e_8\dual$ (resp.~$e_1^{\prime\vee}, \dots, e_8^{\prime\vee}$)
be the  basis of $E_8$ dual to the basis
$e_1, \dots, e_8$ (resp,~$e_1\sprime, \dots, e_8\sprime$).
We have 
$$
u_p:=e_1\dual+ \cdots+ e_8\dual=-68 e_1-46 e_2- 91 e_3 -135 e_4 - 110 e_5 - 84 e_6- 57 e_7 - 29 e_8,
$$
and the similar formula for $u_{p\sprime}:=e_1^{\prime\vee}+ \cdots+ e_8^{\prime\vee}$.
Let $e_1^{\prime\prime\vee}$ and  $e_2^{\prime\prime\vee}$ be the basis of $T_X^{-\vee}$ dual to the basis $e_1\spprime, e_2\spprime$.
Then the  vector
$$
u_0:=
\begin{cases}
30 z_{\phi} +u_p +u_{p\sprime} & \textrm{in No. 9}, \\
30 z_{\phi} +u_p +u_{p\sprime} + e_1^{\prime\prime\vee} & \textrm{in Nos.~4, 5, 6, 8, 10}, 
\end{cases}
$$
satisfies
$$
\intM{u_0, v}{}>0\quad\textrm{for any}\quad v\in \BBB.
$$
(The coefficient $30$ of $z_{\phi}$ in $u_0$ is determined by  $\intM{e_0, u_p}{}=\intM{e_0\sprime, u_{p\sprime}}{}=-29$.)
Consider the projection
$\pi_{S_X}\colon  \PPP(X)\to \H_{S_X}$
to the $19$-dimensional hyperbolic space $\H_{S_X}$.
The point $b:=\pi_{S_X}(f_{\phi})$ is a rational boundary point of $\pi_{S_X}(\Nc(X))$.
By Corollary~\ref{cor:HS},
if we choose a sufficiently small closed horoball $\HB_{b}$ with the base $b$,
then $\HB_{b}\cap \pi_{S_X}(\Nc(X))$
is bounded in $\HB_b$ by the inequalities
$$
\intM{x, v}{}\ge 0
\quad\textrm{for any}\quad v\in \BBB.
$$
Therefore, if $A$ is sufficiently large,
then 
$$
a:=A f_{\phi}+u_0\;\;\in\;\; S_X\tensor\Q
$$
is contained in the interior of $\Nc(X)$.
We use this rational ample class $a$ in search for the primitive embedding $S_X\inj \L$
such that the initial $\RtsLS\sphyp$-chamber $D_0$ is contained in $N(X)$.
\par
We use $U\oplus E_8\oplus E_8\oplus E_8$ as $\L$
and the Weyl vector $w_E$ given in Remark~\ref{rem:UE83}.
Let $\DDD_0$ be the $\Roots_{\L}\sphyp$-chamber corresponding to $w_E$.
We have a natural isomorphism from $\L_{18}(\phi)$ to the sublattice $U\oplus E_8\oplus E_8$
of $\L$.
Hence, in order to find a primitive embedding $i_0\colon S_X\inj \L$,
it is enough to find a primitive embedding 
$T_X^{-}\inj E_8$;
that is, to find vectors $u_1$, $u_2$ of $E_8$ satisfying
$$
u_1^2=-a,\quad u_2^2=-c, \quad \intM{u_1, u_2}{E_8}=-b, 
$$
and generating 
a  primitive sublattice of rank $2$ in $E_8$.
From this embedding $i_0$ and using the method described in Section~\ref{subsec:req3}
with the ample class $a=Af_{\phi}+u_0$ above,
we find a primitive embedding $S_X\inj \L$ 
such that
$D_0:=\DDD_0\cap \PPP(X)$ is an $\RtsLS\sphyp$-chamber contained in $\Nc(X)$.
In the examples below,
we  confirmed that the vector
$$
h_E:=w_{E, S}\in S_X\dual
$$
is in fact in the interior of $D_0$ by Criterion~\ref{criterion:Snondeg}
and hence $\DDD_0$ is $S$-nondegenerate.
Moreover,
we confirmed 
that the interior point  $Af_{\phi}+u_0$  of $N(X)$ is contained in $D_0$
by showing
that
$$
\intM{f_{\phi}, v}{}>0\;\;\textrm{or} \;\; (\intM{f_{\phi}, v}{}=0\;\textrm{and}\; \intM{u_0, v}{}\ge 0)\quad 
\textrm{for any}\;\; v\in \Delta_{S_X\dual} (D_0), 
$$
and hence we can conclude that $D_0$ is contained in $N(X)$.
In particular, $h_E$ is also an ample class of $X$,
and hence we can consider the automorphism group
$$
\Aut(X, h_E):=\set{g\in \Aut(X)}{g^*(h_E)=h_E}
$$
of the polarized $K3$ surface $(X, h_E)$.
Since $\varphi_X$ is injective in our six cases,
we have 
$$
\Aut(X, h_E)\cong \aut_{G_X}(D_0).
$$
\par
If $T_X$ satisfies the root-$R$ condition,
the orthogonal complement $R$ of $S_X$ in $\L$ satisfies $\assumptwo$,
because $\Roots_R\ne \emptyset$.
If $T_X$ does not satisfy the root-$R$ condition,
we present below a Gram matrix of $R$ explicitly,
from which  
we immediately see that $\Roots_R\ne \emptyset$  and hence $R$ satisfies  $\assumptwo$.
%
\subsection{The cases where the root-$R$ condition is satisfied}\label{subsec:easyexamples}
In these cases (Nos. 4,6 and 8), we have $\Chams=\{D_0\}$,
and hence the description of $\Aut(X)$ is simple.
We  have
$$
h_E^2=
\begin{cases}
61/2 & \textrm{in No. 4,}\\
18 & \textrm{in No. 6,}\\
12 & \textrm{in No. 8,}
\end{cases}
$$
and
$$
|\Aut(X, h_E)|=|\aut_{G_X}(D_0)|=
\begin{cases}
48 & \textrm{in No. 4,}\\
144 & \textrm{in No. 6,}\\
720 & \textrm{in No. 8.}
\end{cases}
$$
Table~\ref{table:orbitdecompD0} presents the orbit decomposition of the set of walls
of $D_0$ by the action of $\Aut(X, h_E)\cong \aut_{G_X}(D_0)$.
The column $|o|$ indicates the cardinality of the orbit $o$.
Each wall of $D_0$ is uniquely written as $(v)\sperp$,
where $v$ is a primitive vector of $S_X\dual$ satisfying $\intM{v, h_E}{}>0$.
In Table~\ref{table:orbitdecompD0},
we also present the values
$$
n_v:=v^2, \quad a_v:=\intM{v, h_E}{}
$$
for each orbit.
Note that a wall $(v)\sperp$ of $D_0$ with $v$ primitive in $S_X\dual$ and $\intM{v, h_E}{}>0$
is a $(-2)$-wall if and only if there exists a positive integer $\alpha$
such that $\alpha^2\, n_v=-2$ and $\alpha v\in S_X$.
The orbits that consist of walls satisfying this condition are marked by $*$. 
\begin{table}
\begin{center}
\parbox{4cm}{
No. 4
\par\medskip
$
\begin{array}{cccc}
 |o| & n_v & a_v &\\
 \hline 
 6 & -2 & 1 & *\\ 
 12 & -2 & 1 & *\\ 
 8 & -2 & 1 & *\\ 
 3 & -3/2 & 3/2 & \\ 
 6 & -1 & 5 &\\ 
 4 & -1 & 5 &\\ 
 24 & -3/4 & 6 &\\ 
 8 & -3/4 & 6 &\\ 
 2 & -1/2 & 11/2 & *\\ 
 8 & -1/4 & 13/2 &\\
 \end{array}
$
}
\parbox{4cm}{
No. 6
\par\medskip
$
\begin{array}{cccc}
 |o| & n_v & a_v &\\
 \hline 
 12 & -2 & 1 &*\\ 
 18 & -2 & 1 &*\\ 
 4 & -3/2 & 3/2 &\\ 
 24 & -7/6 & 7/2 &\\ 
 6 & -2/3 & 4 &\\ 
 24 & -2/3 & 5 &\\ 
 36 & -2/3 & 5 &\\ 
 12 & -1/2 & 11/2 &*\\ 
 36 & -1/2 & 11/2 &*\\ 
 24 & -1/6 & 11/2 &\\
\end{array}
$
}
\parbox{4cm}{
No. 8
\par\medskip
$
\begin{array}{cccc}
 |o| & n_v & a_v &\\
 \hline 
 36 & -2 & 1 &*\\ 
 12 & -4/3 & 2 &\\ 
 40 & -6/5 & 3 &\\ 
 90 & -4/5 & 4 &\\ 
 30 & -8/15 & 4 &\\ 
 30 & -8/15 & 4 &\\ 
 120 & -2/15 & 5 &\\ 
 120 & -2/15 & 5 &\\
 \\
 \\
 \end{array}
$
}
\end{center}
\vskip .3cm
\caption{Orbit decomposition of the walls of $D_0$}\label{table:orbitdecompD0} 
\end{table}
From Table~\ref{table:orbitdecompD0},
we see the following.
\par
In No.~4,
the automorphism group $\Aut(X)$ is generated by the finite group $\Aut(X, h_E)$ of order $48$ and six extra automorphisms
corresponding to the six walls of $D_0$ that are not $(-2)$-walls.
Exactly four  orbits 
consist of $(-2)$-walls, and hence
the number of orbits of $\Aut(X)$ on the set of smooth rational curves on $X$ is at most $4$.
\par
In No.~6,
$\Aut(X)$ is generated by the finite group $\Aut(X, h_E)$ of order $144$ and six extra automorphisms.
The number of orbits of $\Aut(X)$ on the set of smooth rational curves is at most $4$.
\par
In No.~8,
$\Aut(X)$ is generated by the finite group $\Aut(X, h_E)$ of order $720$ and seven extra automorphisms,
and 
 $\Aut(X)$ acts on the set of smooth rational curves transitively.
\begin{remark}
In~\cite{ShimadaAutSingK3}, 
geometric realizations of generators of $\Aut(X)$ for No.~8 will be  given.
We also show that the finite group   $\Aut(X, h_E)$  of order $720$ for No.~8  is isomorphic to $\PGL_2(\F_9)$.
\end{remark}
\subsection{The case $[a,b,c]=[2,1,6]$ (No.~5)}\label{subsec:216}
The lattice $R$ has a Gram matrix
$$
\left[ \begin {array}{cccccc} 
-2&1&0&0&-1&0\\\noalign{\medskip}
1&-2&1&0&0&1\\\noalign{\medskip}
0&1&-2&1&0&0\\\noalign{\medskip}
0&0&1&-2&1&0\\\noalign{\medskip}
-1&0&0&1&-4&0\\\noalign{\medskip}
0&1&0&0&0&-2
\end {array} \right].
$$
The number of $G_X$-equivalence classes of $\RtsLS\sphyp$-chambers is $1098$,
and the maximum of the level (see~Section~\ref{sec:main} for the definition) is $13$.
The results are presented in Example~\ref{example:singK3disc11} in Introduction.
\subsection{The case $[a,b,c]=[4,1,4]$ (No.~9)}\label{subsec:414}
The lattice $R$ has a Gram matrix
$$
\left[ \begin {array}{cccccc} 
-2&1&0&0&0&0\\\noalign{\medskip}
1&-2&1&0&0&-1\\\noalign{\medskip}
0&1&-2&1&0&0\\\noalign{\medskip}
0&0&1&-2&1&0\\\noalign{\medskip}
0&0&0&1&-2&1\\\noalign{\medskip} 
0&-1&0&0&1&-4
\end {array} \right].
$$
The number of $G_X$-equivalence classes of $\RtsLS\sphyp$-chambers is $2051$,
and the maximum of the level is $16$.
We find that 
$\Image \varphi_X$ is generated by $1098$ elements of $\OG(S_X)$.
Moreover the number of orbits of the action of $\Aut(X)$ on the set of smooth rational curves is at most $154$.
\subsection{The case $[a,b,c]=[2,0,8]$  (No.~10)}\label{subsec:208}
The lattice $R$ has a Gram matrix
$$
\left[ \begin {array}{cccccc} -2&1&0&0&1&0\\\noalign{\medskip}1&-2&1&0
&0&1\\\noalign{\medskip}0&1&-2&1&0&0\\\noalign{\medskip}0&0&1&-2&0&-1
\\\noalign{\medskip}1&0&0&0&-2&0\\\noalign{\medskip}0&1&0&-1&0&-4
\end {array} \right].
$$
The number of $G_X$-equivalence classes of $\RtsLS\sphyp$-chambers is $4539$,
and the maximum of the level is $17$.
We find that 
$\Image \varphi_X$ is generated by $3308$ elements of $\OG(S_X)$.
Moreover the number of orbits of the action of $\Aut(X)$ on the set of smooth rational curves is at most $705$.
\begin{remark}\label{rem:compdata}
The detailed computational data of the above  three examples are 
presented in the author's web page~\cite{compdataweb}.
\end{remark}
\begin{remark}\label{rem:complexFQ}
The complex  quartic surface $X$
defined by
$w^4+x^4+y^4+z^4=0$
in $\P^3$ is a singular $K3$ surface
with 
$$
T_X=\left[
\begin{array}{cc}
8 & 0 \\ 0 & 8
\end{array}
\right].
$$
We  embed $S_X$ into $\L$ 
primitively 
in such a way that 
the Weyl vector
$w_{0, {S_X}}$ 
 of the initial $\RtsLS\sphyp$-chamber $D_0$ is in the interior of $D_0$ 
and is equal to the class of a hyperplane section of $X$ in $\P^3$.
We  found that 
the number of $G_X$-congruence classes of $\RtsLS\sphyp$-chambers is \emph{at least} $10000$.
\end{remark}

\par 
\medskip
{\bf Acknowledgement.}
The author is deeply grateful to Professors Daniel Allcock,
Toshiyuki Katsura, Shigeyuki Kondo and Shigeru Mukai  for their interests in this work
and many discussions.
Thanks are also due to the referees of the first version of this paper for many suggestions.

\bibliographystyle{plain}

\begin{thebibliography}{10}

\bibitem{MR0371899}
M.~Artin.
\newblock Supersingular {$K3$} surfaces.
\newblock {\em Ann. Sci. \'Ecole Norm. Sup. (4)}, 7:543--567 (1975), 1974.

\bibitem{MR2590897}
Avner Ash, David Mumford, Michael Rapoport, and Yung-Sheng Tai.
\newblock {\em Smooth compactifications of locally symmetric varieties}.
\newblock Cambridge Mathematical Library. Cambridge University Press,
  Cambridge, second edition, 2010.
\newblock With the collaboration of Peter Scholze.

\bibitem{MR791880}
Richard Borcherds.
\newblock The {L}eech lattice.
\newblock {\em Proc. Roy. Soc. London Ser. A}, 398(1815):365--376, 1985.

\bibitem{MR913200}
Richard Borcherds.
\newblock Automorphism groups of {L}orentzian lattices.
\newblock {\em J. Algebra}, 111(1):133--153, 1987.

\bibitem{MR1654763}
Richard Borcherds.
\newblock Coxeter groups, {L}orentzian lattices, and {$K3$} surfaces.
\newblock {\em Internat. Math. Res. Notices}, (19):1011--1031, 1998.

\bibitem{MR717219}
Va{\v{s}}ek Chv{\'a}tal.
\newblock {\em Linear programming}.
\newblock A Series of Books in the Mathematical Sciences. W. H. Freeman and
  Company, New York, 1983.



\bibitem{MR1662447}
J.~H. Conway and N.~J.~A. Sloane.
\newblock {\em Sphere packings, lattices and groups}, volume 290 of {\em
  Grundlehren der Mathematischen Wissenschaften [Fundamental Principles of
  Mathematical Sciences]}.
\newblock Springer-Verlag, New York, third edition, 1999.
\newblock With additional contributions by E. Bannai, R. E. Borcherds, J.
  Leech, S. P. Norton, A. M. Odlyzko, R. A. Parker, L. Queen and B. B. Venkov.
  
\bibitem{MR660415}
J.~H. Conway, R.~A. Parker, and N.~J.~A. Sloane.
\newblock The covering radius of the {L}eech lattice.
\newblock {\em Proc. Roy. Soc. London Ser. A}, 380(1779):261--290, 1982.

\bibitem{MR1935564K}
I.~Dolgachev and S.~Kond{\=o}.
\newblock A supersingular {$K3$} surface in characteristic 2 and the {L}eech
  lattice.
\newblock {\em Int. Math. Res. Not.}, (1):1--23, 2003.

\bibitem{MR1897389}
Igor Dolgachev and Jonghae Keum.
\newblock Birational automorphisms of quartic {H}essian surfaces.
\newblock {\em Trans. Amer. Math. Soc.}, 354(8):3031--3057 (electronic), 2002.

\bibitem{gmp}
T.~Granlund et~al.
\newblock Gmp, the gnu multiple precision arithmetic library. 

\newblock http://gmplib.org/index.html.

\bibitem{MR3286672}
Toshiyuki Katsura, Shigeyuki Kondo, and Ichiro Shimada.
\newblock On the {S}upersingular {$K3$} {S}urface in {C}haracteristic {$5$}
  with {A}rtin {I}nvariant {$1$}.
\newblock {\em Michigan Math. J.}, 63(4):803--844, 2014.

\bibitem{MR1806732}
Jonghae Keum and Shigeyuki Kond{\=o}.
\newblock The automorphism groups of {K}ummer surfaces associated with the
  product of two elliptic curves.
\newblock {\em Trans. Amer. Math. Soc.}, 353(4):1469--1487 (electronic), 2001.

\bibitem{MR1618132}
Shigeyuki Kond{\=o}.
\newblock The automorphism group of a generic {J}acobian {K}ummer surface.
\newblock {\em J. Algebraic Geom.}, 7(3):589--609, 1998.

\bibitem{KondoShimada}
Shigeyuki Kond{\=o} and Ichiro Shimada.
\newblock The automorphism group of a supersingular {$K3$} surface with {A}rtin
  invariant $1$ in characteristic $3$. 
   \newblock {\em Int. Math. Res. Not.}, (7):1885--1924, 2014.

\bibitem{arXiv11023377}
Max Lieblich and Davesh Maulik.
\newblock A note on the cone conjecture for {$K3$} surfaces in positive
  characteristic, 2011.
\newblock preprint, arXiv:1102.3377.


\bibitem{MR525944}
V.~V. Nikulin.
\newblock Integer symmetric bilinear forms and some of their geometric
  applications.
\newblock {\em Izv. Akad. Nauk SSSR Ser. Mat.}, 43(1):111--177, 238, 1979.
\newblock English translation: Math USSR-Izv. 14 (1979), no. 1, 103--167
  (1980).

\bibitem{MR633160}
V.~V. Nikulin.
\newblock Quotient-groups of groups of automorphisms of hyperbolic forms by
  subgroups generated by {$2$}-reflections. {A}lgebro-geometric applications.
\newblock In {\em Current problems in mathematics, Vol. 18}, pages 3--114.
  Akad. Nauk SSSR, Vsesoyuz. Inst. Nauchn. i Tekhn. Informatsii, Moscow, 1981.

\bibitem{MR1802343}
V.~V. Nikulin.
\newblock On the classification of hyperbolic root systems of rank three.
\newblock {\em Tr. Mat. Inst. Steklova}, 230:256, 2000.

\bibitem{MR1118842}
Gordon~L. Nipp.
\newblock {\em Quaternary quadratic forms}.
\newblock Springer-Verlag, New York, 1991.
\newblock The table is available from
  http://www.math.rwth-aachen.de/\~{}Gabriele.Nebe/LATTICES/nipp.html.

\bibitem{MR563467}
Arthur Ogus.
\newblock Supersingular {$K3$} crystals.
\newblock In {\em Journ\'ees de G\'eom\'etrie Alg\'ebrique de Rennes (Rennes,
  1978), Vol. II}, volume~64 of {\em Ast\'erisque}, pages 3--86. Soc. Math.
  France, Paris, 1979.

\bibitem{MR717616}
Arthur Ogus.
\newblock A crystalline {T}orelli theorem for supersingular {$K3$} surfaces.
\newblock In {\em Arithmetic and geometry, Vol. II}, volume~36 of {\em Progr.
  Math.}, pages 361--394. Birkh\"auser Boston, Boston, MA, 1983.

\bibitem{MR0284440}
I.~I. Piatetski-Shapiro and I.~R. Shafarevich.
\newblock Torelli's theorem for algebraic surfaces of type {${\rm K}3$}.
\newblock {\em Izv. Akad. Nauk SSSR Ser. Mat.}, 35:530--572, 1971.
\newblock Reprinted in I. R. Shafarevich, Collected Mathematical Papers,
  Springer-Verlag, Berlin, 1989, pp.~516--557.

\bibitem{MR1299730}
John~G. Ratcliffe.
\newblock {\em Foundations of hyperbolic manifolds}, volume 149 of {\em
  Graduate Texts in Mathematics}.
\newblock Springer-Verlag, New York, 1994.

\bibitem{MR633161}
A.~N. Rudakov and I.~R. Shafarevich.
\newblock Surfaces of type {$K3$} over fields of finite characteristic.
\newblock In {\em Current problems in mathematics, Vol. 18}, pages 115--207.
  Akad. Nauk SSSR, Vsesoyuz. Inst. Nauchn. i Tekhn. Informatsii, Moscow, 1981.
\newblock Reprinted in I. R. Shafarevich, Collected Mathematical Papers,
  Springer-Verlag, Berlin, 1989, pp. 657--714.

\bibitem{MR2346573}
Matthias Sch{\"u}tt.
\newblock Fields of definition of singular {$K3$} surfaces.
\newblock {\em Commun. Number Theory Phys.}, 1(2):307--321, 2007.

\bibitem{MR0344216}
J.-P. Serre.
\newblock {\em A course in arithmetic}.
\newblock Springer-Verlag, New York, 1973.
\newblock Translated from the French, Graduate Texts in Mathematics, No. 7.



\bibitem{MR1813537}
Ichiro Shimada.
\newblock On elliptic {$K3$} surfaces.
\newblock {\em Michigan Math. J.}, 47(3):423--446, 2000.


\bibitem{MR2452829}
Ichiro Shimada.
\newblock Transcendental lattices and supersingular reduction lattices of a
  singular {$K3$} surface.
\newblock {\em Trans. Amer. Math. Soc.}, 361(2):909--949, 2009.


\bibitem{MR3166075}
Ichiro Shimada.
\newblock Projective models of the supersingular {$K$}3 surface with {A}rtin
  invariant 1 in characteristic 5.
\newblock {\em J. Algebra}, 403:273--299, 2014.

\bibitem{compdataweb}
Ichiro Shimada.
\newblock Computational data on $K3$ surfaces, 2014. 
\newblock 
  http://www.math.sci.hiroshima-u.ac.jp/{\textasciitilde}shimada/K3.html.
  

\bibitem{ShimadaAutSingK3}
Ichiro Shimada.
\newblock The automorphism groups of certain singular $K3$ surfaces and an Enriques surface, 2014.
\newblock in preparation.


\bibitem{MR1820211}
Ichiro Shimada and De-Qi Zhang.
\newblock Classification of extremal elliptic {$K3$} surfaces and fundamental
  groups of open {$K3$} surfaces.
\newblock {\em Nagoya Math. J.}, 161:23--54, 2001.

\bibitem{MR0441982}
T.~Shioda and H.~Inose.
\newblock On singular {$K3$} surfaces.
\newblock In {\em Complex analysis and algebraic geometry}, pages 119--136.
  Iwanami Shoten, Tokyo, 1977.

\bibitem{MR1081832}
Tetsuji Shioda.
\newblock On the {M}ordell-{W}eil lattices.
\newblock {\em Comment. Math. Univ. St. Paul.}, 39(2):211--240, 1990.


\bibitem{MR786280}
Hans Sterk.
\newblock Finiteness results for algebraic {$K3$} surfaces.
\newblock {\em Math. Z.}, 189(4):507--513, 1985.

\bibitem{MR3113614}
Masashi Ujikawa.
\newblock The automorphism group of the singular {$K3$} surface of discriminant
  7.
\newblock {\em Comment. Math. Univ. St. Pauli}, 62(1):11--29, 2013.

\bibitem{MR0422505}
{\`E}.~B. Vinberg.
\newblock Some arithmetical discrete groups in {L}oba\v cevski\u\i\ spaces.
\newblock In {\em Discrete subgroups of {L}ie groups and applications to moduli
  ({I}nternat. {C}olloq., {B}ombay, 1973)}, pages 323--348. Oxford Univ. Press,
  Bombay, 1975.


\bibitem{MR719348}
{\`E}.~B. Vinberg.
\newblock The two most algebraic {$K3$} surfaces.
\newblock {\em Math. Ann.}, 265(1):1--21, 1983.

\bibitem{MR2429266}
{\`E}.~B. Vinberg.
\newblock Classification of 2-reflective hyperbolic lattices of rank 4.
\newblock {\em Tr. Mosk. Mat. Obs.}, 68:44--76, 2007.

\end{thebibliography}

\def\cftil#1{\ifmmode\setbox7\hbox{$\accent"5E#1$}\else
  \setbox7\hbox{\accent"5E#1}\penalty 10000\relax\fi\raise 1\ht7
  \hbox{\lower1.15ex\hbox to 1\wd7{\hss\accent"7E\hss}}\penalty 10000
  \hskip-1\wd7\penalty 10000\box7} \def\cprime{$'$} \def\cprime{$'$}
  \def\cprime{$'$} \def\cprime{$'$}

\end{document}